\documentclass[oneside]{amsart}

\usepackage{geometry}
\usepackage{amssymb}
\usepackage[mathscr]{euscript}
\usepackage{amsmath}
\usepackage{mathtools}
\usepackage{enumitem} 
\usepackage{mathrsfs}
\usepackage{lipsum}
\usepackage{amsfonts}
\usepackage{graphicx}
\usepackage{epstopdf}
\usepackage{algorithmic}
\usepackage{hyperref}
\usepackage{tikz-cd}

\newcommand{\Aut}{\operatorname{Aut}}
\newcommand{\End}{\operatorname{End}}
\newcommand{\Pol}{\operatorname{Pol}}

\newcommand{\Th}{\operatorname{Th}}

\renewcommand{\int}{\operatorname{int}}

\newcommand{\CSP}{\operatorname{CSP}}

\newcommand{\eq}{\mathrm{eq}}

\newcommand{\cA}{\mathcal A}
\newcommand{\cB}{\mathcal B}

\newcommand{\cS}{\mathcal S}

\newcommand{\bA}{\mathbb A}
\newcommand{\bB}{\mathbb B}
\newcommand{\bC}{\mathbb C}
\newcommand{\bD}{\mathbb D}
\newcommand{\bQ}{\mathbb Q}
\newcommand{\bG}{\mathbb G}
\newcommand{\bR}{\mathbb R}
\newcommand{\bN}{\mathbb N}
\newcommand{\bE}{\mathbb E}

\usepackage{thm-restate}

\theoremstyle{plain}
\newtheorem{theorem}{Theorem}
\newtheorem{lemma}[theorem]{Lemma}
\newtheorem{corollary}[theorem]{Corollary}
\newtheorem{proposition}[theorem]{Proposition}
\newtheorem{conjecture}{Conjecture}

\theoremstyle{definition}
\newtheorem{definition}[theorem]{Definition}
\newtheorem{example}[theorem]{Example}

\theoremstyle{remark}
\newtheorem*{remark}{Remark}

\title{Decidability of Interpretability} 

\thanks{Funded by the European Union (ERC, POCOCOP, 101071674). Views and opinions expressed are however those of the author(s) only and do not necessarily reflect those of the European Union or the European Research Council Executive Agency. Neither the European Union nor the granting authority can be held responsible for them. This research was funded in whole or in part by the Austrian Science Fund (FWF) [I 5948]. For the purpose of Open Access, the authors have applied a CC BY public copyright licence to any Author Accepted Manuscript (AAM) version arising from this submission.}

\author{Roman Feller}
\author{Michael Pinsker}

\begin{document}

\begin{abstract}
    The Bodirsky-Pinsker conjecture asserts a P vs. NP-complete dichotomy for the computational complexity of Constraint Satisfaction Problems (CSPs) of first-order reducts of  finitely bounded homogeneous structures. Prominently, two structures in the scope of the conjecture have log-space equivalent CSPs if they are pp-bi-interpretable, or equivalently, if their polymorphism clones are topologically isomorphic. The latter gives rise to the algebraic approach which regards structures with topologically isomorphic polymorphism clones as equivalent and seeks to identify structural reasons for hardness or tractability in topological clones. We establish that the equivalence relation of pp-bi-interpretability underlying this approach is reasonable: On the one hand, we show that it is decidable under mild conditions on the templates; this improves a theorem of Bodirsky, Pinsker and Tsankov (LICS'11) on decidability of equality of polymorphism clones. On the other hand, we  show that within the much larger class of transitive $\omega$-categorical structures without algebraicity, the equivalence relation is of lowest possible complexity in terms of  descriptive set theory: namely, it is smooth, i.e., Borel-reduces to equality on the real numbers. 
    On our way to showing the first result, we establish that the model-complete core of a structure that has a finitely bounded homogeneous Ramsey expansion (which might include all structures of the Bodirsky-Pinsker conjecture)  is computable, thereby providing a constructive alternative to  previous non-constructive proofs of its existence.
\end{abstract}

\maketitle

\section{Introduction} 

For a countable relational structure $\bA$ in a finite relational language, the constraint satisfaction problem with template $\bA$, for short $\CSP(\bA)$, denotes the computational problem of deciding whether or not a given finite relational structure homomorphically maps into $\bA$. This 
provides a very flexible framework capable of  modelling, up to polynomial-time Turing reductions,  every computational problem one dares to dream about~\cite{BodirskyGrohe}. A typical example is the $3$-colouring problem of graphs, modelled as the CSP of the complete loopless graph on three vertices. As a further example,  the computational problem of checking if a finite input graph is triangle-free can be modelled as the CSP of  Henson's  (infinite) generic triangle-free graph. This problem is provably not a finite-domain CSP, i.e., there does not exist any finite graph $\bG$ such that $\CSP(\bG)$ describes said problem. This demonstrates the necessity 
 to also consider 
templates with an infinite domain, at least if one wishes to capture some of the most natural CSPs such as vertex-colouring problems with forbidden patterns (CSPs expressible in MMSNP~\cite{MMSNPFederVardi}), edge-colouring problems with forbidden patterns (CSPs expressible in GMSNP~\cite{GMSNP}), and many more.

As of today, however, the computational complexity of finite-domain CSPs is much better understood. A deep algebraic theory using the language of universal algebra has been developed for this situation after Feder and Vardi had conjectured a P vs. NP-complete complexity dichotomy for CSPs  with a finite domain, culminating in the independent confirmation of this dichotomy in 2017, by Bulatov and Zhuk~\cite{BulatovFVConjecture,Zhuk17,Zhuk20}. In the infinite world such a complexity dichotomy is simply not possible even if one restricts to templates that are $\omega$-categorical, a model-theoretic notion rendering countable structures ``close to being finite''. Formally, $\omega$-categoricity of a structure means that its automorphism group acts with finitely many orbits in the componentwise action on its $n$-tuples, for all $n\geq 1$; intuitively, this means that the structure only distinguishes finitely many $n$-tuples up to automorphisms, and a typical example is  the order of the rational numbers $(\mathbb Q,<)$. While this mathematical property makes an algebraic approach to its CSP possible  as we will explain further in Section~\ref{sect:polymorphisms}, it has in itself little bearing on the computational tameness of the CSP~\cite{GJKMP-conf, BodirskyGrohe}.

There is however, a class of $\omega$-categorical structures, \emph{first-order reducts of finitely bounded homogeneous structures}, giving rise to CSPs within NP while substantially generalising the class of computational problems that can be modelled as finite domain CSPs. For example, in this class we can model problems arising in temporal and spatial reasoning~\cite{BodirksyAI}, as well as the aforementioned colouring problems. The class is defined as follows. A homogeneous structure in a finite relational language satisfies a particularly strong form of $\omega$-categoricity, where not only the number of orbits of $n$-tuples is finite, but where the orbit of each tuple is completely determined by the relations that hold between its components (for which there are only finitely many possibilities). Again, the structure $(\mathbb Q,<)$ is a good example: the three possible orbits of a pair $(a,b)$ are given by $a=b$, $a<b$, and $b<a$. 
Finite boundedness means that moreover, the orbits that appear in the structure are described by a finite set of ``forbidden patterns''. In the example of $(\mathbb Q,<)$, no triple $(a,b,c)$ satisfies $a<a$ or $a<b<c<a$; it is easy to come up with a finite set of forbidden patterns that suffice to define the possible relations that can hold on a tuple. Finally, $\bC$ being a first-order reduct of a finitely bounded homogeneous structure $\bA$ means that each relation of $\bC$ is a union of orbits of $\bA$ of the same arity. This is the same as $\bC$ being first-order definable in $\bA$, or equivalently, since finitely bounded homogeneous structures have quantifier elimination, being definable in $\bA$ by quantifier-free formulas.
 
It is easy to see that if $\bC$ is a first-order reduct of a finitely bounded homogeneous structure $\bA$, then its CSP is in NP: an input structure can be enumerated to obtain a tuple, and relations for the language of $\bA$ can be randomly ``guessed'' on the tuple; it can then be verified in polynomial time if this assignment yields an orbit of $\bA$ and whether or not it yields a homomorphism into $\bC$.
\begin{conjecture}[Bodirsky and Pinsker $\sim$2011, see~\cite{BPP-projective-homomorphisms}]\label{conj:bp}
    Let $\bC$ be a first-order reduct of a finitely bounded homogeneous structure $\bA$. Then $\CSP(\bC)$ is either in P or NP-complete.
\end{conjecture}

While the conjecture has been confirmed in many instances such as first-order reducts of $(\mathbb Q,<)$~\cite{tcsps-journal}, a countable set with equality~\cite{ecsps}, the universal homogeneous poset~\cite{posetCSP18}, homogeneous undirected graphs~\cite{BMPP16}, in particular the random graph~\cite{BodPin-Schaefer-both}, the universal homogeneous tournament~\cite{MottetPinskerSmooth} as well as MMSNP~\cite{MMSNP_conf,MMSNP-Journal}, and graph orientation problems with forbidden tournaments~\cite{forbiddenTournaments,AntoineZenoPaper,FellerPinsker}, it is still wide-open in general.  

The common toolbox for finite-domain CSPs and
more generally 
CSPs within the Bodirsky-Pinsker conjecture is the analysis of polymorphisms, which are certain algebraic invariants of the solution space of a CSP; we shall discuss them next. 

\subsection{Polymorphisms and the complexity of a CSP}\label{sect:polymorphisms}

A polymorphism of a relational structure $\bA$ is a homomorphism from a finite power of $\bA$ into itself. The collection of these ``symmetries of higher arity'', denoted by $\Pol(\bA)$, forms an algebraic object called a clone. 
The computational complexity of the CSP of an $\omega$-categorical structure is tightly connected with its polymorphism clone. 
In~\cite{BodirskyNesetrilJLC} Bodirsky and Ne\v{s}et\v{r}il show that two $\omega$-categorical structures with the same polymorphism clones have polynomial-time equivalent CSPs.  
Thus, if one is interested in the  complexity of the CSP of such structures up to polynomial-time equivalence, two structures may be regarded as equivalent if their polymorphism clones are equal. 

The underlying mathematical content of this reduction is that the
equivalence of having equal polymorphism clones  admits a description using a fragment of first-order logic: primitive positive (pp) formulas. A first-order formula is said to be primitive positive if it is of the form $\exists y_1\cdots \exists y_k \phi$, where $\phi$ is a conjunction of atomic formulas. Namely, in~\cite{BodirskyNesetrilJLC} it is shown that two structures $\bA$ and $\bB$ have the same polymorphism clones precisely when they pp-define the same sets of relations, i.e., when they are \emph{pp-interdefinable}. This insight shaped the early days, the first regime, of infinite-domain CSP research and led to complexity dichotomies for first-order reducts of, e.g., a countable set with equality~\cite{ecsps} and reducts of the rational linear order $(\bQ,<)$~\cite{tcsps-journal}. Such classifications were obtained by first fixing a finitely bounded homogeneous ``ground structure'' (e.g.,~$(\mathbb Q,<)$) and then considering all possible polymorphism clones of its first-order reducts; these are precisely the polymorphism clones containing 
the automorphism group of the ground structure. 

Moving to the meta-perspective, the equivalence of pp-inter\-definability underlying this first regime was investigated from a computational viewpoint in~\cite{BPT-decidability-of-definability}. 
More precisely it is shown that it is decidable to determine whether two first-order reducts $\bC,\bD$ of a finitely bounded homogeneous  structure $\bA$ are pp-interdefinable, provided that $\bA$ is a Ramsey structure. While this latter condition of combinatorial nature might seem like a strong restriction at first, this need not be the case whatsoever: 
it is an open question whether every first-order reduct of a  finitely bounded homogeneous structure (i.e.~a structure within the scope of Conjecture~\ref{conj:bp}) is also a first-order reduct of a  finitely bounded homogeneous Ramsey structure, see~\cite{BPT-decidability-of-definability};~\cite{RamseyClasses}, Conjecture 1.1. From a practical point of view, all methods available today depend on the ground structure $\bA$ being Ramsey; see, e.g.,~the survey~\cite{pinsker-sheep}. 
We remark that the decidability statement indeed makes sense since finitely bounded homogeneous structures are uniquely determined by their finite set of finite bounds, and hence can be fed to an algorithm; a first-order reduct is then given to the algorithm in the form of quantifier-free formulas defining its relations.

\begin{theorem}[Bodirsky, Pinsker, and Tsankov~\cite{BPT-decidability-of-definability}]\label{thm:DecOfDef}
    The following problem is decidable:
    \begin{itemize}
    \item \textbf{Given:} A finitely bounded homogeneous Ramsey structure $\bA$ and first-order reducts $\bC,\bD$ of it.
    \item \textbf{Question:} Are $\bC$, $\bD$ pp-interdefinable?
    \end{itemize}
\end{theorem}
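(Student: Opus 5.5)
The plan is to reduce pp-interdefinability to a finite, checkable combinatorial condition by combining three ingredients: the Galois connection of~\cite{BodirskyNesetrilJLC}, canonical functions coming from the Ramsey property, and finite boundedness. Since $\bC$ and $\bD$ are pp-interdefinable if and only if $\Pol(\bC)=\Pol(\bD)$, it suffices by symmetry to decide whether every relation $R$ of $\bD$ is pp-definable in $\bC$. For $\omega$-categorical $\bC$, this holds if and only if every polymorphism of $\bC$ preserves $R$. Equivalently, $R$ is \emph{not} pp-definable exactly when there are a finite arity $m$, tuples $t_1,\dots,t_m\in R$, and a polymorphism $f\in\Pol(\bC)$ with $f(t_1,\dots,t_m)\notin R$. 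The arity $m$ can be bounded by the number of tuples in the orbits of $\Aut(\bC)$ relevant to $R$; since $R$ is a union of orbits of $\Aut(\bA)$, finitely many orbits suffice. So for each fixed $m$ and each finite choice of orbit representatives we must decide whether a suitable polymorphism exists, and only finitely many such choices arise.

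The key step is to show that the witness $f$ may be taken \emph{canonical} with respect to $\bA$, after first adding constants for the finitely many entries of $t_1,\dots,t_m$. Here canonical means that the $\Aut(\bA)$-orbit of the image of a tuple depends only on the orbits of the argument tuples. Because $\bA$ is Ramsey (and the expansion by finitely many constants is again Ramsey and homogeneous), the standard canonisation lemma applies. Any $f$ generates, via local closure under $\Aut(\bA)$ composed on both sides, a canonical function $g$ that agrees with $f$ on the finite set of constants. Since $g$ lies in the topological closure of $\{\alpha\circ f\circ(\beta_1,\dots,\beta_m)\}$, it is still a polymorphism of $\bC$ and still maps $(t_1,\dots,t_m)$ outside $R$.

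A canonical function is determined by finite data: a map from tuples of orbits of $n$-tuples (for $n$ up to the maximal arity of $\bA$, plus the constants) to orbits. Two things must then be decided for each such candidate map, and there are finitely many. First, whether it is realised by an actual function on $\bA^m$. Second, whether it preserves the relations of $\bC$; this is a finite check on the orbit level, since these relations are quantifier-free definable in $\bA$. For realisability, we form the "product" structure on $A^m$ labelled by the prescribed types. A function with that behaviour exists if and only if this structure admits a homomorphism into $\bA$ respecting the prescribed orbits. By a compactness argument, this holds if and only if every finite substructure does, and this in turn holds if and only if no bound of $\bA$ embeds into the prescribed type pattern. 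The latter is decidable because the bounds are finite and the pattern is determined by finitely many types whose size is bounded by the maximal size of a bound.

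I expect the main obstacle to be the canonisation step with constants, together with making the realisability test effective. The two steps must interact correctly: we need the Ramsey property to survive the addition of finitely many constants, and we need to check that consistency of the orbit-level behaviour reduces to avoiding bounds among finitely many points. For the latter I would follow the approach of~\cite{BPT-decidability-of-definability}: the behaviour induces an amalgamation-free local condition, and the bounds have bounded size, so only substructures of bounded size need checking.
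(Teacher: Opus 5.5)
Your proposal is correct and is essentially the original argument of Bodirsky, Pinsker and Tsankov. You bound the arity of a witnessing polymorphism by the number of $\Aut(\bA)$-orbits contained in $R$, then canonise it over the finitely many constants using the Ramsey property of $(\bA,c)^m$ (the $m$-ary, constant-expanded form of Theorem~\ref{thm:canonisation}), and finally decide realisability of the finitely many candidate behaviours via finite boundedness, as in Theorem~\ref{thm:computingBehaviour}. The present paper does not reprove this statement and simply cites~\cite{BPT-decidability-of-definability} for it, so there is no separate proof in the paper to compare against.
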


We remark that as a subset of their proof, the authors obtain the same statement for existential positive (ep-)interdefinability; on the other hand, deciding first-order interdefinability, stated as an open problem in~\cite{BPT-decidability-of-definability}, remains open to this day. 

Vastly generalizing the mentioned result on pp-interdefinability, Bodirsky and Pinsker~\cite{Topo-Birk} later showed that in fact the complexity of a CSP, up to log-space reductions, is already determined by the polymorphism clone up to topological clone isomorphism. Similarly to the previous situation with equality of polymorphism clones, the mathematical content underlying this reduction admits a description using pp-formulas, this time using the concept of \emph{pp-bi-interpretability} instead of pp-interdefinability.  A pp-interpretation of a structure $\bB$ in $\bA$ is a partial surjective function $\Gamma$ from a finite power of $\bA$ onto $\bB$ so that preimages of pp-definable sets remain pp-definable. It is a pp-bi-interpretation if there additionally exists a pp-interpretation $\Delta$ of $\bA$ in $\bB$ that is, in some sense, inverse to $\Gamma$. 

This result of Bodirsky and Pinsker allows, in principle,  for a more structured approach to CSPs within Conjecture~\ref{conj:bp}: rather than considering each polymorphism clone one-by-one (say, for all fo-reducts $\bC$ of a fixed finitely bounded homogeneous  structure $\bA$, as was done previously in the above-mentioned first-generation  classifications), develop a theory for the possible structure of polymorphism clones, viewed as topological clones. This direction was first taken in a CSP dichotomy for problems expressible in the logic MMSNP~\cite{MMSNP_conf,MMSNP-Journal}, and culminated in the  technique of ``smooth approximations''~\cite{MottetPinskerSmoothConf,MottetPinskerSmooth}, which was successfully used to essentially reprove all known CSP complexity dichotomies for templates within Conjecture~\ref{conj:bp} to this day. Such classifications have been called second-generation proofs in~\cite{MottetPinskerSmoothConf,MottetPinskerSmooth}.

The new philosophy means precisely that the equivalence relation on structures of being pp-bi-interpretable (rather than the finer equivalence of being pp-interdefinable) lies at the heart of modern-day infinite-domain CSP research. Therefore, showing the reasonability of this equivalence would justify the validity of this approach.
In particular it would be reassuring to know that, just like pp-interdefinability, pp-bi-interpretability is decidable, and this is the central theme of the present work.

\subsection{Descriptive set theory} \label{sect_intro_descriptive}
 While the above-mentioned theorem of Bodirsky and Pinsker on the equivalence of  topological isomorphism of polymorphism clones with pp-bi-interpretability is useful for a structured approach to Conjecture~\ref{conj:bp}, it actually holds in the larger class of all $\omega$-categorical structures. A decidability result like Theorem~\ref{thm:DecOfDef}, however, does not  make sense for this class, as there are uncountably many non-isomorphic $\omega$-categorical structures and we therefore cannot encode them as inputs of an algorithm. Therefore, if we wish to measure how reasonable, or how complex the equivalence of bi-interpretability on the class of $\omega$-categorical structures is, we have to resort to other, non-computational measures; here, descriptive set theory turns out to provide the adequate language.

Namely, it provides a framework to compare the complexity of classification problems encountered throughout mathematics. Such problems can often be described by equivalence relations on Polish spaces, and \emph{Borel-reducibility} provides a robust notion to compare their complexity~\cite{Kechris_2024}. It is defined as follows, see Section~\ref{sect:Prelims} for definitions of the underlying notions. Let $(X,E_X)$ and $(Y,E_Y)$ be Polish spaces (think $\bR$ with the Euclidean topology) equipped with equivalence relations respectively. 
The relation $E_X$ Borel-reduces to $E_Y$, denoted by $(X,E_X) \leq_B (Y,E_Y)$ if there is a Borel map $f\colon X \to Y$ that reduces $E_X$ to $E_Y$, i.e., for all $x,y \in X$ it holds
\begin{equation*}
    x \mathrel{E_X} y \iff f(x) \mathrel{E_Y} f(y).
\end{equation*}
Note the analogy to many-one reductions: the function $f$ is ``nice'' (it is Borel) and allows us to reduce the question  whether $x,y$ are equivalent in $E_X$ to the question whether $f(x), f(y)$ are equivalent in $E_Y$. 
In this sense, one can say that the classification problem of $E_X$ is at most as difficult as the one of $E_Y$. We write $E_X <_B E_Y$ if $E_X \leq_B E_Y$ and $E_Y \not \leq_B E_X$.
This notion of reducibility was introduced in the seminal paper~\cite{HKLBorel} by Harrington, Kechris, and Louveau, and it provides a framework for comparing the intrinsic complexity of classification problems. Prominent examples of this approach are the classifications of countable torsion-free abelian groups up to isomorphism~\cite{TorsionFreeGroups}, the classification of separable $C^*$-algebras~\cite{C*Algebras}, and the 
classification of ergodic measure preserving transformations~\cite{ErgodicClassifiaction}. 

Work of Silver~\cite{Silver} and Harrington-Kechris-Louveau~\cite{HKLBorel} shows that Borel equivalence relations on Polish spaces have the following initial segment under Borel-reducibility 
\begin{equation*}
    1 <_B 2 <_B \cdots <_B \bN <_B (\mathbb R,=). 
\end{equation*}
Here each natural number $n$ stands for a Borel equivalence relation with $n$-many equivalence classes, $\bN$ stands for Borel equivalence relations with countably many classes and $(\mathbb R,=)$ simply stands for equality on the real numbers. Any Borel equivalence relation is either equivalent to one of the above, and in this case is called \emph{smooth}, or strictly lies above $\bR$ under Borel-reducibility. In this sense smooth equivalence relations are the most basic, ``tractable'', equivalence relations from a Borel-reducibility point of view. 

CSP templates, i.e., countable structures in a finite language, can naturally be encoded as (countable) 0-1-sequences, and hence as elements of the real numbers $\mathbb R$. The subclass of $\omega$-categorical structures  form a natural Polish space (a subspace of $\mathbb R$), and thus the complexity of any equivalence relation on this class can be evaluated in terms of Borel reducibility. In particular, this is true for the equivalence pp-bi-interpretability, and smoothness of this relation would be a strong indication for its reasonability as a foundation for a structured algebraic approach to the CSPs of this class.

\subsection{Model-complete cores}\label{sect:cores}
In our quest to understand the complexity of pp-bi-interpretability, we will naturally come across a central concept of the algebraic approach to CSPs, namely that of a (model-complete) core. We shall now explain this notion.   
An easy, yet important insight for finite-domain CSPs is that for any finite relational structure $\bA$ there is a unique structure $\bA'$ of smallest domain size with the same CSP, called the \emph{core} of $\bA$. 
This can be generalized to $\omega$-categorical structures,
where the notion of minimal size is replaced to mean that all endomorphisms of $\bA'$ locally resemble, i.e., restricted to finite sets are equal to, (restrictions of) automorphisms. The existence of these \emph{model-complete cores} for $\omega$-categorical structures is, as opposed to the finite case, non-trivial and was first  established in~\cite{Cores-journal}; they are again $\omega$-categorical and unique up to isomorphism. 

\begin{example}
    The model-complete core of 
    \begin{itemize}
        \item $(\mathbb Q,\leq )$ is a single point with equality;
        \item $(\bQ,<)$ is $(\bQ,<)$;
        \item the random graph is the countably infinite clique;
        \item $K_{\omega,\omega}$, the complete bipartite graph with two countable parts, is a single edge, $K_2$.
    \end{itemize}
\end{example}

The model-complete core $\bA'$ of a structure $\bA$  has desirable model-theoretic  properties, e.g., the orbits of its automorphism group on $n$-tuples are pp-definable in it.
As a result, it is systematically used in the study of constraint satisfaction in infinite domains~\cite{Book}. Even in second-generation complexity classifications, a standard ``preprocessing'' step when determining the complexity of the CSP of a template is replacing the template by its model-complete core, see e.g.,~\cite{MottetPinskerSmooth}. To do so, combinatorial effort is made to  \emph{compute} this core. Unfortunately, neither the original  existence proof from~\cite{Cores-journal}, nor the later alternative proof from~\cite{EquationsInOliog} helps with that task as they are non-constructive, relying on a weak version of the axiom of choice, the compactness theorem of first-order logic.

While not providing a computing procedure either as it relies on the compactness theorem again, the following more recent result due to Mottet and Pinsker has helped finding the model-complete core in practice. 

\begin{theorem}[Mottet and Pinsker~\cite{MottetPinskerCores}]\label{thm:MottetPinskerCores}
    Let $\bC$ be a  first-order reduct  of a  finitely bounded homogeneous Ramsey structure $\bA$. Then its model-complete core $\bC'$ is  a first-order reduct of a finitely bounded homogeneous Ramsey substructure $\bA' \subseteq \bA$. The relations of $\bC'$ are defined by the same quantifier free formulas in $\bA'$ as were those of  $\bC$ in $\bA$.
\end{theorem}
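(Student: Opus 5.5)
The plan is the standard route via canonical functions. Let $\bC$ be a first-order reduct of the finitely bounded homogeneous Ramsey structure $\bA$.

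\emph{Step 1: a canonical endomorphism with minimal range.} Among all endomorphisms of $\bC$, pick one whose image is minimal in a suitable sense. By $\omega$-categoricity and compactness I would choose $e\in\End(\bC)$ such that the structure induced by $\bA$ on $e[A]$ realizes an inclusion-minimal set of $\bA$-types among all such images. Because $\bA$ is Ramsey, the Bodirsky--Pinsker canonisation lemma applies: the closure of $\{\alpha\circ e\circ\beta:\alpha,\beta\in\Aut(\bA)\}$ contains a function $g$ that is canonical from $\bA$ to $\bA$. Every type realized in the image of $g$ was already realized in the image of $e$, so minimality is preserved. Then let $\bA'$ be the substructure of $\bA$ induced on the image of $g$. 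Its age is the class of finite substructures of $\bA$ on which $g$ acts as a canonical map. Since $g$ sends $n$-types to $n$-types and is idempotent up to automorphisms (using minimality, $g\circ g$ has the same behaviour on types), this age is determined by which types of bounded arity are realized. Hence $\bA'$ is finitely bounded: its bounds are those of $\bA$ together with the finitely many forbidden types of arity at most the maximal arity of $\bA$.

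\emph{Step 2: homogeneity and the Ramsey property of $\bA'$.} I would show that the age of $\bA'$ is an amalgamation class and that the Fraïssé limit embeds into $\bA$ as the image of $g$. Concretely, $g[A]$ should be closed under the relevant automorphisms. Replacing $g$ by a limit of maps $\alpha\circ g$ with $\alpha$ chosen by a back-and-forth argument, one gets an image that is homogeneous: any partial isomorphism between finite subsets of $g[A]$ is induced by an automorphism of $\bA$ composed with $g$. The canonicity of $g$ ensures that the image preserves the type-determined behaviour. The Ramsey property is inherited, because the age of $\bA'$ is a hereditary subclass of the age of $\bA$ closed under the partition arguments. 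More precisely, colourings of copies in a substructure lift via $g$: given $F\in\mathrm{Age}(\bA')$ that is Ramsey for $P,H$ inside $\mathrm{Age}(\bA)$, the image $g[F]$ lies in $\mathrm{Age}(\bA')$ and contains monochromatic copies, since $g$ maps copies to copies canonically.

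\emph{Step 3: the core property.} Let $\bC'$ be the structure on $A'$ whose relations are defined by the same quantifier-free formulas as those of $\bC$. Since $\bA'$ is a substructure of $\bA$, $\bC'$ is the substructure of $\bC$ on $g[A]$. The maps $g\colon\bC\to\bC'$ and the inclusion $\bC'\to\bC$ are homomorphisms, so $\bC$ and $\bC'$ are homomorphically equivalent. To see that $\bC'$ is a model-complete core, take any endomorphism $h$ of $\bC'$. Then $h\circ g$ is an endomorphism of $\bC$ with image inside $g[A]$. By minimality it realizes the same types, so $h$ preserves $\bA'$-types on finite sets, and by homogeneity of $\bA'$ it agrees locally with automorphisms. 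If the minimality is formulated with respect to $\bC$-types rather than $\bA$-types, this shows that $h$ preserves all first-order definable relations of $\bC'$, which is exactly the defining property of a model-complete core. Uniqueness then identifies $\bC'$ with the model-complete core of $\bC$.

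The main obstacle is Step 2: obtaining an image that is genuinely homogeneous, rather than merely having the right age. I would first try to choose $g$ canonical as a function from $(\bA,c)$ for finitely many constants and apply a diagonal/compactness argument. An alternative is to define $\bA'$ abstractly as the Fraïssé limit of the image age, and then use the fact that $\bA$ is universal for this age, together with canonicity, to embed $\bA'$ as a substructure of $\bA$ that is still the image of an endomorphism.
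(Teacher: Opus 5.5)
Your overall plan matches the one behind Theorem~\ref{thm:computeop} and Lemma~\ref{lem:coresZF}: a canonical endomorphism $g$ of $\bC$ whose range meets an inclusion-minimal set $T$ of $\bA$-orbits, the age of that range as a finitely bounded Ramsey Fra\"{i}ss\'{e} class, and its limit placed inside $\bA$. However, Step~3, where the core property has to be proved, has a genuine gap.

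The fact that the range of $h\circ g$ realises all of $T$ says nothing about what $h$ does to a given tuple $t$. A non-canonical $h$ can send $t$ to a tuple of another type in $T$ while its range still realises every type in $T$. Even for $\bA'$-canonical $h$, minimality only shows that $h$ \emph{permutes} the finitely many $\bA'$-types of each arity, not that it fixes them. So ``$h$ agrees locally with automorphisms of $\bA'$'' is false in general. For example, take $\bA=(\bQ,<)$ and $\bC$ the betweenness relation on $\bQ$: $\bC$ is its own model-complete core, and $x\mapsto -x$ is an endomorphism reversing $\bA$-types. Passing to $\bC$-types fixes neither problem.

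What you need is this: for all $h\in\End(\bC')$ and all tuples $t$ there is $k\in\End(\bC')$ with $k(h(t))=t$. A back-and-forth then gives density of $\Aut(\bC')$ in $\End(\bC')$. For canonical $h$ this is easy: some power $h^m$ fixes all types, so $\alpha^{-1}\circ h^{m-1}$ returns $h(t)$ to $t$ for a suitable $\alpha\in\Aut(\bA')$. The real work, done in Lemma~\ref{lem:coresZF}, is reducing to that case without losing $t$. Suppose the statement fails at $t$.
\begin{itemize}
\item Compose $h$ with automorphisms and endomorphisms, and use compactness, to get $h'\in\End(\bC')$ for which it fails simultaneously at every tuple of the $\bA'$-orbit $O$ of $t$.
\item This uniform failure survives canonisation with respect to $\bA'$ and passing to a range-rigid power $r$. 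Hence $O$ is not realised in the range of $r$.
\item Let $\iota$ embed the range of $g$ into $\bA'$. Then $r\circ\iota\circ g$ is an endomorphism of $\bC$ whose range realises strictly fewer $\bA$-orbits than that of $g$, contradicting minimality.
\end{itemize}

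There are also three smaller errors.
\begin{itemize}
\item Minimality does not make $g\circ g$ behave like $g$; the betweenness example already shows this. It only makes $g$ a permutation of the image types of each arity, so you must replace $g$ by a power acting as the identity on them. Your finite-boundedness and Ramsey arguments genuinely use that $g$ fixes every type in its range.
\item With such a range-rigid $g$, Step~2 is not an obstacle. Amalgamate in the age of $\bA$ and apply $g$, which keeps both factors embedded. Then take $\bA'$ to be the Fra\"{i}ss\'{e} limit, embed it into $\bA$ by universality, and use $\iota\circ g$ as the homomorphism $\bC\to\bC'$. There is no need to make the range of $g$ itself homogeneous, and your back-and-forth sketch for doing so is not an argument.
\item A hereditary subclass of a Ramsey class need not be Ramsey. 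It is your pull-back of colourings along the type-fixing $g$ that actually proves the Ramsey property.
\end{itemize}
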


In this paper, we will provide an algorithm which outputs $\bA'$ and $\bC'$ in the situation of this theorem; this will be of help for our bi-interpretability questions.

\subsection{Our contributions}\label{sect:contributions}

\subsubsection{The complexity of bi-interpretability}
Our first main contribution lies in showing that the equivalence relation of pp-bi-inter\-pretability, underlying the  algebraic approach to infinite-domain CSPs since~\cite{Topo-Birk},  is reasonable both from the  perspective of computation as well as from the perspective of descriptive set theory. We start with the former, where we show the following variant of Theorem~\ref{thm:DecOfDef} from~\cite{BPT-decidability-of-definability}. 

\begin{restatable}{theorem}{DecOfInt}\label{thm:dec_of_int}
    The following problem is decidable:
    \begin{itemize}
    \item \textbf{Given:} Finitely bounded homogeneous Ramsey structures $\bA, \bB$ with first-order reducts $\bC,\bD$, respectively, whose   model-complete cores $\bC',\bD'$ are without algebraicity.
    \item \textbf{Question:} Are $\bC',\bD'$  (fo/ep)-bi-interpretable?
    \end{itemize}
    Moreover, if $\bA,\bB$ are transitive, then the same statement holds for pp-bi-interpretability.
\end{restatable}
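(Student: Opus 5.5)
We first compute the model-complete cores, and then reduce bi-interpretability to a bounded search over interpretations, each candidate being checkable by the decision procedures behind Theorem~\ref{thm:DecOfDef}.

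\textbf{Step 1: computing the cores.} Following Theorem~\ref{thm:MottetPinskerCores}, the core $\bC'$ is a first-order reduct of a finitely bounded homogeneous Ramsey substructure $\bA'\subseteq\bA$, defined by the same quantifier-free formulas. We make this constructive as follows. We enumerate candidate finite sets of bounds describing $\bA'$ as a homogeneous substructure of $\bA$; for instance, $\bA'$ could be given by forbidding certain orbits of $\bA$ of bounded arity. For each candidate we check two things: that $\bC$ and the candidate reduct map homomorphically into each other, and that the candidate is a model-complete core. Using the Ramsey property, endomorphisms may be assumed canonical, so both conditions reduce to finite combinatorial checks on orbits of tuples of bounded size. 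The first condition is a statement about canonical maps between finitely many orbits. The second says that every canonical endomorphism of $\bC'$ acts on orbits of $\bA'$ as an automorphism would, on types up to the relevant arity. Hence the search terminates with $\bA',\bC'$, and similarly with $\bB',\bD'$.

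\textbf{Step 2: bounding the interpretations.} Since $\bC',\bD'$ are $\omega$-categorical model-complete cores without algebraicity, every orbit is pp-definable. Moreover, any fo/ep/pp-interpretation between them should be equivalent to one of bounded dimension with bounded-size defining formulas. We aim to show, via the theory of bi-interpretations (Ahlbrandt--Ziegler style, together with Theorem~\ref{thm:DecOfDef}'s canonical-function technique), that if a bi-interpretation exists then there is one whose dimension and defining formulas are bounded computably in terms of the bounds of $\bA',\bB'$. No algebraicity is what forces the coordinate maps to be essentially injective on orbits. It therefore limits interpretations to products of copies of $\bD'$ restricted to definable sets, modulo definable equivalences that are trivial on tuples of distinct elements.

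\textbf{Step 3: checking candidates.} We enumerate pairs $(\Gamma,\Delta)$ of quantifier-free definable candidate interpretations of the bounded size from Step~2, along with candidate homotopy witnesses, i.e.\ definitions of the graphs of $\Gamma\circ\Delta$ and $\Delta\circ\Gamma$ as relations. For each pair we check two conditions. The first is that the preimages of the basic relations, and the graphs of the composites, are fo/ep/pp-definable. This uses the decidability of definability in the Ramsey setting, Theorem~\ref{thm:DecOfDef} and its ep variant, applied to first-order reducts of the homogeneous Ramsey structure obtained as a disjoint or product combination of $\bA'$ and $\bB'$. The second is that the composites are identity-homotopic, which is a first-order property checkable through quantifier elimination.

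\textbf{Transitive case and main obstacle.} In the transitive pp case, transitivity should ensure that the combined structure is still a reduct of a finitely bounded homogeneous Ramsey structure with a single $1$-orbit per sort. The equivalence on pairs of distinct elements is then handled by a pp-definable disequality, which is available in a model-complete core without algebraicity. The main obstacle is Step~2, the computable bound on the dimension and complexity of a witnessing bi-interpretation. Our first attempt would be canonisation: apply Ramsey arguments to replace an arbitrary interpretation map by a canonical one, whose behaviour depends only on orbits of tuples of a size bounded by the arities of the bounds.
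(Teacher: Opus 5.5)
\textbf{There is a genuine gap: Step~2, which you flag yourself, is never closed, and canonisation cannot close it.} Canonisation replaces a map by a canonical one in the closure of $\{\beta\circ f\circ\alpha\}$. It leaves the dimension $n$ of an interpretation (a partial map from $\Omega^n$) untouched, so it gives no computable bound.

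The missing idea is that under no algebraicity you need no search over interpretations at all, because bi-interpretability of $\bC',\bD'$ collapses to bi-definability.
\begin{itemize}
\item \emph{Groups.} If $\Gamma$ is an fo-bi-interpretation with domain $U$ and kernel $\sim$, then $U/_\sim$ is dense in $\bA^\eq$. Moreover $\Aut(\bA)$ acts on $U/_\sim$ without algebraicity, because this action is isomorphic to $\Aut(\bB)\curvearrowright\Omega'$. Hence each $a\in\Omega$ lies in all tuples of exactly one class, and each class arises this way. This ``entanglement'' is an $\Aut(\bA)$-equivariant bijection $\Omega\to U/_\sim$, so every topological isomorphism of the automorphism groups is an isomorphism of permutation group actions (Corollary~\ref{cor:iso_setinduced}).
\item \emph{Monoids.} Entanglement is preserved by finite-to-one endomorphisms, and automorphisms are dense in the endomorphism monoid of a model-complete core. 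So the same holds for $\End$ (Corollary~\ref{cor:IsoIsSetInduced}), once one knows that topological isomorphisms of $\End$ come from ep-bi-interpretations. That holds here because such isomorphisms must send constants to constants (Proposition~\ref{prop:isomEndoEpBiInterpretNoAlg}).
\item \emph{Clones.} Transitivity is used only in the pp case, to lift the isomorphism of monoid actions to one of clone actions via the Behrisch--Vargas-Garc\'{i}a theorem (Theorem~\ref{thm:liftingSetInducedIso}).
\end{itemize}
After this collapse, the question is exactly fo/ep/pp-bi-definability of $\bC',\bD'$, which Theorem~\ref{thm:biDefDec} decides.

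Your proposed use of transitivity also fails: a pp-definable disequality is not available in general. $(\bQ,<)$ is a transitive model-complete core without algebraicity, and its polymorphism $\min$ sends $(0,1),(1,0)$ to $(0,0)$, so $\neq$ is not pp-definable.

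\textbf{Step~3 has an independent gap that a dimension bound would not repair.} A candidate interpretation given by formulas over $\bA'$ produces some $\omega$-categorical structure. Before you can speak of preimages of $\bD'$-relations under $\Gamma$, or compose with $\Delta$, you must decide whether this structure is isomorphic to $\bD'$. Theorem~\ref{thm:DecOfDef} only decides whether given relations are definable in one fixed structure. It says nothing about isomorphism between an interpreted structure and a reduct of a different homogeneous structure. Passing to a disjoint or product combination of $\bA'$ and $\bB'$ does not help, since the identification with $\bD'$ is not definable there.

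The paper resolves this isomorphism issue only in dimension one, via optimal presentations:
\begin{itemize}
\item expand the cores by all definable relations up to the maximal arity, which is computable by~\cite{BPT-decidability-of-definability};
\item Lemma~\ref{lem:optimalpresentation} then turns any bi-definition into an isomorphism of the expansions that is also an fo-bi-definition of $\bA',\bB'$;
\item by Lemma~\ref{lem:criterion-bidefinability}, such isomorphisms are witnessed by canonical injections with finitely many possible behaviours.
\end{itemize}

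\textbf{Step~1 is missing the same ingredient.} Enumerating arbitrary finite sets of bounds gives no guarantee of amalgamation or the Ramsey property. Certifying that a candidate passing your ``acts like an automorphism'' test really is the model-complete core requires a uniqueness statement like Lemma~\ref{lem:optimalpresentation}. The paper instead uses ranges of range-rigid canonical endomorphisms hitting an inclusion-minimal set of orbits. These yield Ramsey Fra\"{i}ss\'{e} classes by~\cite{MottetPinskerCores} (Theorem~\ref{thm:computeop}).
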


Comparing this theorem with Theorem~\ref{thm:DecOfDef}, we observe the   appearance of model-complete cores, no algebraicity, and transitivity. With model-complete cores being the unique ``smallest'' template representing an $\omega$-categorical CSP, and our ability to compute the model-complete core (see Theorem~\ref{thm:mccore}), we are not disturbed by them. The assumption of no algebraicity in the model-complete core can be made without loss of generality by recent work in~\cite{removingAlgebraicity}; it is ubiquitous in the infinite-domain CSP literature (e.g., in the theory of smooth approximations~\cite{MottetPinskerSmooth}), and we refer to Section~\ref{sect:Prelims} for more details.  
Finally, the transitivity in the case of pp-bi-interpretability is the only true  restriction; it means that the automorphism group of the structure acts with a single orbit on its domain. 
A good example is, once again, the structure $(\mathbb Q,<)$: any element of $\bQ$ can be sent to any other element by an automorphism. Among all the complexity dichotomies only very few, e.g.,~the ones for unary structures~\cite{Bodirsky-Mottet} and  MMSNP~\cite{MMSNP_conf}, deal with non-transitive structures. Thus, this assumption still encompasses the vast majority of all work on Conjecture~\ref{conj:bp}.

Similarly to the computational perspective, we will show that the equivalence relation of bi-interpretability of $\omega$-categorical structures is reasonable from the point of view of descriptive set theory, under mild assumptions.

\begin{restatable}{theorem}{IntIsSmooth}\label{thm:biIntSmooth}
The equivalence relation of ep-bi-interpretability is a smooth equivalence relation on the class of  $\omega$-categorical model-complete cores without algebraicity. Moreover, pp-bi-interpretability is smooth on the subclass of structures that are additionally transitive. 
\end{restatable}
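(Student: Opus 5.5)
The plan is to reduce ep-bi-interpretability to an equivalence relation with a countable, Borel-computable complete invariant. The key structural input is a rigidity phenomenon. For $\omega$-categorical model-complete cores without algebraicity, an ep-bi-interpretation should be forced to be essentially trivial. This means it is given, up to isomorphism, by the identity on a single coordinate, so that the two structures become ep-interdefinable after a renaming. Concretely, I would first show the following: if $\bC$ and $\bD$ are such cores and are ep-bi-interpretable, then the bi-interpretation corresponds to a homeomorphic isomorphism between their endomorphism monoids, or between their automorphism groups together with the ep-definable relations. Moreover, such an isomorphism is induced by a bijection of the domains. In other words, I am claiming a reconstruction result: topological isomorphisms of the relevant monoids or clones of such cores are induced by bijections. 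No algebraicity should rule out proper interpretations in higher powers. A finite-dimensional interpretation whose coordinate maps are not essentially injective and unary would create algebraic elements or non-trivial finite orbits, and these would be incompatible with the core property.

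Granting this, ep-bi-interpretability of $\bC$ and $\bD$ becomes the statement that $\bC$ is isomorphic to a structure ep-interdefinable with $\bD$. Equivalently, the ep-theories coincide after forgetting the signature. I would encode each structure $\bC$ by the set of isomorphism types of its ep-definable relations, recorded as a countable object. A natural choice is the collection, over all $n$, of the finitely many orbits of $n$-tuples together with which unions of orbits are ep-definable. By $\omega$-categoricity and Ryll-Nardzewski, this data is finite for each $n$. Its isomorphism type is then determined by a sequence of finite combinatorial objects up to relabelling, which can be canonically normalised into a real number. The map from the Polish space of $\omega$-categorical structures to this invariant is Borel. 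This holds because deciding whether a given tuple-relation is ep-definable is a countable Boolean combination of conditions on the encoding: by the ep-analogue of Bodirsky--Nešetřil, it amounts to preservation under endomorphisms, which is a $\Pi^1_1$-looking condition. That condition becomes Borel once we use that, in an $\omega$-categorical structure, preservation under endomorphisms is witnessed by finite partial maps extendable by compactness.

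The main obstacle is this Borelness together with canonicity of the invariant: the invariant must be a complete invariant up to bi-interpretability, not just up to ep-interdefinability with fixed domain. Here the model-complete core assumption matters. Since orbits are pp- (hence ep-) definable in a core, the ep-definable relations contain all orbit relations. The invariant therefore coincides with a ``first-order theory'' style invariant, namely the theory of $\bC$ expanded by names for ep-definable sets, which is a countable complete theory. Mapping a structure to its theory is a standard Borel map into $2^{\bN}$, giving smoothness.

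For the transitive pp-case, I would run the same argument with pp in place of ep and the polymorphism clone in place of the endomorphism monoid. Transitivity together with no algebraicity should guarantee that pp-interpretations are again one-dimensional up to pp-definable equivalence. The point is to exclude a pp-interpretation collapsing onto a proper orbit, which is exactly what fails without transitivity.
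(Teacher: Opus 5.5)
\textbf{Verdict: genuine gap.} Your architecture is the paper's: reduce ep-bi-interpretability to ep-bi-definability by a reconstruction result, then show bi-definability is smooth. But the step you explicitly grant is the whole substance of the theorem. The heuristic you offer for it is not an argument, and it misplaces the hypotheses. Exotic interpretations are ruled out by no algebraicity on both sides together with a density property, not by the core property.

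Concretely, the paper argues \`a la Rubin. An ep-bi-interpretation of $\bD$ in $\bC$ with domain $U$ and kernel $\sim$ is in particular a fo-bi-interpretation. So the natural map $\Aut(\bC)\to\Aut(\bC)\curvearrowright U/_\sim$ is a topological isomorphism. Its openness makes $U/_\sim$ dense in $\bC^{\eq}$, and no algebraicity of $\bD$ transfers to the action on $U/_\sim$. Using no algebraicity of $\bC$, one shows that each $a\in\Omega$ lies in all tuples of exactly one class. This ``entanglement'' is an $\Aut(\bC)$-equivariant bijection $\Omega\to U/_\sim$ (Lemmas~\ref{lem:entanglement_establishing} and~\ref{lem:entanglementBijection}).

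Only then does the core assumption enter. Automorphisms, hence finite-to-one endomorphisms, are dense in $\End(\bC)$, and entanglement is preserved by finite-to-one endomorphisms (Lemma~\ref{lem:EntanglementMonoids}). So the induced monoid isomorphism is conjugation by a bijection. Your remark that orbits are ep-definable in a core could shortcut this last step: fo- and ep-definability coincide in model-complete cores, so ep-bi-interpretability there equals fo-bi-interpretability and the group-level reconstruction suffices. You do not use the remark this way, though.

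For the pp case, ``the same argument'' is not enough. Once the unary part of the clone isomorphism is known to be conjugation by a bijection, you still must show that the same bijection conjugates all higher-arity polymorphisms. The paper gets this from the theorem of Behrisch and Vargas-Garc\'{i}a (Theorem~\ref{thm:liftingSetInducedIso}), whose hypothesis is transitivity. Your proposal has no substitute for this step.

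Your smoothness step also has a problem as stated. ``The theory of $\bC$ expanded by names for ep-definable sets'' only makes sense once the names are assigned independently of the signature of $\bC$. If they are indexed by ep-formulas over $\bC$, equality of theories amounts to isomorphism of $\bC$ and $\bD$ in a common signature, which is far finer than ep-bi-definability.

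The paper instead uses the structure $\bE_{\End(\bC)}$, whose relations $\{(x,e(x))\mid x\in\Omega^n,\ e\in\End(\bC)\}$ encode the monoid action uniformly. Its isomorphisms are exactly the bijections conjugating $\End(\bC)$ onto $\End(\bD)$ (Lemma~\ref{lem:OrbitalStructuresIsomorphicIff}). It is also $\omega$-categorical, so its isomorphism type is determined by its theory.

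Your alternative ``sequence of finite objects up to relabelling'' can be made to work, but you would need all of the following, none of which is in the proposal:
\begin{itemize}
\item the $n$-th term must be the isomorphism type of the whole coherent system of orbits on $k$-tuples, $k\le n$, with the maps induced by coordinate maps and the ep-definable unions;
\item a K\"{o}nig's lemma argument to pass from level-wise isomorphisms to an isomorphism of the full system;
\item a back-and-forth argument to turn that into an ep-bi-definition.
\end{itemize}
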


\subsubsection{Between interdefinability and bi-interpretability: bi-definability}

There is an intermediate equivalence relation on $\omega$-categorical structures between the one given by equality of polymorphism clones (characterized by pp-interdefinability) and the coarser one given by topological
isomorphism of polymorphism clones (characterized by pp-bi-interpretability): conjugacy of polymorphism clones. Namely, we can consider structures $\bA$ and $\bB$ equivalent if there is a bijection $\theta\colon \bA\to\bB$ between their domains such that the map $f(x_1,\ldots,x_n)\mapsto \theta\circ f(\theta^{-1}(x_1),\ldots,\theta^{-1}(x_n))$ is a bijection from the polymorphism clone of $\bA$ onto the polymorphism clone of $\bB$. In that case, it is easy to see that this map is a topological isomorphism between the two polymorphism clones, and hence $\bA$ and $\bB$  are pp-bi-interpretable. Structures whose polymorphism clones are conjugate in this sense are called \emph{pp-bi-definable}. If one replaces bi-interpretability by bi-definability in Theorems~\ref{thm:dec_of_int}, and~\ref{thm:biIntSmooth}, then one can drop the assumptions on no algebraicity and transitivity. 
In fact, we first show the following statements, the former also being a generalization of Theorem~\ref{thm:DecOfDef}.

\begin{restatable}{theorem}{biDefDec}\label{thm:biDefDec}
    The following problem is decidable: 
    \begin{itemize}
    \item \textbf{Given:} Finitely bounded homogeneous Ramsey structures $\bA$, $\bB$ and  first-order reducts $\bC$, $\bD$ in a finite language.
    \item \textbf{Question:} Are the model-complete cores of the structures $\bC$, $\bD$ (fo/ep/pp)-bi-definable?
    \end{itemize}
\end{restatable}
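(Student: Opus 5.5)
We first use the announced computability of model-complete cores (Theorem~\ref{thm:mccore}, the constructive form of Theorem~\ref{thm:MottetPinskerCores}). From the finite bounds of $\bA$ and the quantifier-free definitions of $\bC$, we compute finite bounds of a finitely bounded homogeneous Ramsey substructure $\bA'\subseteq\bA$ such that $\bC'$ is the first-order reduct of $\bA'$ given by the same quantifier-free formulas; we do the same for $\bB$ and $\bD$. This reduces the problem to deciding whether two first-order reducts $\bC',\bD'$ of finitely bounded homogeneous Ramsey structures $\bA',\bB'$ are (fo/ep/pp)-bi-definable. Bi-definability means that there is a bijection $\theta$ between the domains under which $\bD'$ becomes a structure $\theta(\bD')$ on the domain of $\bC'$ that is (fo/ep/pp)-interdefinable with $\bC'$. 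Equivalently, $\bD'$ is isomorphic to a structure in the same signature that is (fo/ep/pp)-definable in $\bC'$ and that defines back the relations of $\bC'$ via the same isomorphism.

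Next we make the search space finite. Let $\tau_D$ be the finite signature of $\bD'$. A candidate witness consists of an (fo/ep/pp)-formula $\phi_R$ over $\bC'$ for each $R\in\tau_D$, together with, for each relation $S$ of $\bC'$, a formula $\psi_S$ over $\tau_D$. These formulas must satisfy two conditions. First, the $\tau_D$-structure $\bE$ defined in $\bC'$ by the $\phi_R$ must be isomorphic to $\bD'$. Second, the $\psi_S$ evaluated in $\bE$ must define exactly the relations $S$ of $\bC'$. Formulas over $\bC'$ can be enumerated. For fixed formulas, each $\phi_R$ defines in $\bA'$ a union of orbits, and we can compute this union effectively: by homogeneity and finite boundedness, orbits of $n$-tuples correspond to the finitely many $n$-element structures in the age. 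So $\bE$ is a first-order reduct of $\bA'$ with computable quantifier-free definitions. The second condition is then a first-order statement about $\bA'$, decidable by quantifier elimination, since the first-order theory of a finitely bounded homogeneous structure is decidable. The first condition is decidable as well. Both $\bE$ and $\bD'$ are $\omega$-categorical, so they are isomorphic if and only if they have the same first-order theory. Both theories are decidable: each structure is a reduct of a finitely bounded homogeneous structure, and a sentence over $\bE$ translates into a sentence over $\bA'$. Moreover, both theories are determined by their finitely many types on tuples of bounded length, up to the arity bound. Thus we can semi-decide bi-definability by enumerating candidate formulas and checking them.

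The main obstacle is the complementary direction: certifying non-bi-definability, that is, bounding the size of the formulas that need to be searched. Here I would use the Ramsey property in the style of~\cite{BPT-decidability-of-definability}. For fo-bi-definability, first-order definable relations in $\bC'$ are unions of $\Aut(\bA')$-orbits of bounded arity, so only finitely many candidate relations exist for each $R$. The search is therefore finite, and both directions are decidable directly.

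For ep and pp, definability is governed by endomorphisms and polymorphisms, and the relevant definable sets can require unbounded quantifiers. Here I would use canonical functions. By the Ramsey property, whether a relation is pp- or ep-definable in $\bC'$ can be tested by considering only polymorphisms that are canonical with respect to $\bA'$ on finite tuples, and these are described by finitely many finite data. In this way, the question whether $\bE$ is pp-interdefinable with $\bC'$ reduces to a question about finitely many canonical behaviours, as in Theorem~\ref{thm:DecOfDef}.

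To handle the unknown bijection $\theta$, I would exploit that $\bC'$ and $\bD'$ are model-complete cores. Their automorphism orbits are then pp-definable, so a bi-definition must map the orbits of $\Aut(\bC')$ onto those of $\Aut(\bD')$, and in particular $\Aut(\bC')$ and $\Aut(\bD')$ must be conjugate under $\theta$. I would therefore first decide fo-bi-definability of $\bC'$ and $\bD'$, which is the finite search above. This leaves only finitely many candidates $\bE$, namely fo-reducts of $\bA'$ that are isomorphic to $\bD'$. For each candidate, I would decide pp-interdefinability (respectively ep-interdefinability) with $\bC'$ using Theorem~\ref{thm:DecOfDef} applied within $\bA'$.
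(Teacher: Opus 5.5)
\textbf{The gap: your test for $\bE\cong\bD'$.} The whole procedure rests on this test, including your final list of ``fo-reducts of $\bA'$ that are isomorphic to $\bD'$''. Knowing that $\Th(\bE)$ and $\Th(\bD')$ are each decidable does not let you decide whether they are equal; you can only semi-decide inequality by finding a separating sentence. Your claim that these theories are determined by the types of tuples up to the arity bound is false. An equivalence relation with two infinite classes and one with three infinite classes have the same $2$-types but are not isomorphic. Written with the relations ``equivalent and distinct'' and ``inequivalent'', they are even model-complete cores.

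The fo case has a second problem. There are only finitely many candidate unions of $\Aut(\bA')$-orbits, but to use them you must decide which of them are fo-definable in $\bC'$, and which relations of $\bC'$ are fo-definable in $\bE$. Decidability of fo-definability is exactly what is open (see the remark after Theorem~\ref{thm:DecOfDef}). So the real obstacle is not bounding formula size, since Theorem~\ref{thm:DecOfDef} already handles the ep/pp definability tests. It is the unknown bijection $\theta$, and this is what the paper handles with optimal presentations.

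\textbf{How to repair it.} Your framework can be fixed, and then gives a route different from the paper's.
\begin{itemize}
\item Reduce fo to ep first. In an $\omega$-categorical model-complete core every fo-definable relation is preserved by all endomorphisms, hence is ep-definable. So fo- and ep-bi-definability of $\bC',\bD'$ coincide.
\item Keep only those of the finitely many $\tau_D$-reducts $\bE$ of $\bA'$ that are ep- (resp.\ pp-)interdefinable with $\bC'$. This is decidable by Theorem~\ref{thm:DecOfDef}.
\item Such an $\bE$ satisfies $\End(\bE)=\End(\bC')$ and $\Aut(\bE)=\Aut(\bC')$, so it is itself a model-complete core. By uniqueness of model-complete cores, $\bE\cong\bD'$ holds if and only if $\bE$ and $\bD'$ are homomorphically equivalent.
\item By Theorem~\ref{thm:canonisation}, applied in both directions since $\bA'$ and $\bB'$ are both Ramsey, homomorphic equivalence is witnessed by canonical homomorphisms. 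Their finitely many behaviours are computable by Theorem~\ref{thm:computingBehaviour}.
\end{itemize}

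\textbf{Comparison with the paper.} The paper instead expands $\bC',\bD'$ by all (fo/ep/pp)-definable relations of arity at most $n$. It then uses Lemma~\ref{lem:optimalpresentation} to upgrade any bi-definition to a fo-bi-definition of the optimal presentations $\bA',\bB'$, and searches for mutually inverse canonical behaviours via Lemma~\ref{lem:criterion-bidefinability}. The repaired version of your argument needs only Theorem~\ref{thm:mccore}, not optimality of the presentation. The price is the extra observation that interdefinability with $\bC'$ forces $\bE$ to be a model-complete core.
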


\begin{restatable}{theorem}{biDefSmooth}\label{thm:biDefSmooth}
    The equivalence relation of being (fo/ep/pp)-bi-definable on $\omega$-categorical structures is smooth. 
\end{restatable}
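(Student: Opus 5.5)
We must show that (fo/ep/pp)-bi-definability is smooth on the Polish space of $\omega$-categorical structures. The plan is to attach to each structure a complete invariant that lives in a standard Borel space and depends on the structure in a Borel way. Bi-definability means the two structures have conjugate polymorphism clones. For fo and ep this is phrased with automorphism groups and endomorphism monoids respectively, in the role that polymorphism clones play for pp.

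For fo-bi-definability, recall the standard fact for $\omega$-categorical structures. Two structures are fo-bi-definable iff some bijection between their domains conjugates one automorphism group onto the other, i.e.\ iff $\bC$ is isomorphic to a structure fo-interdefinable with $\bD$. The analogous statements hold for ep (via $\End$) and pp (via $\Pol$), by the Galois connections for $\omega$-categorical structures.

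The invariant is the countable ``canonical language'' structure $\bC^{\mathrm{can}}$. Its domain is that of $\bC$, and it has one relation symbol for every (fo/ep/pp)-definable relation of $\bC$. Naively there are countably many definable relations but no canonical indexing, so we index them by formulas. For each formula $\phi$ of the fragment in the language of $\bC$, let $R_\phi$ be the relation it defines. Two structures are bi-definable iff there is a bijection $\theta$ with $\{\theta[R_\phi^{\bC}]\}_\phi=\{R_\psi^{\bD}\}_\psi$ as sets of relations. Since the languages differ, we pass to a language-independent invariant: the countable family of definable relations, considered up to isomorphism of the structure $(C; \text{all definable relations})$ with an unlabelled relation set.

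To reduce to equality on reals, we use $\omega$-categoricity. Each arity $n$ has only finitely many orbits $O_1,\dots,O_k$ of $n$-tuples, and every fo-definable relation is a union of orbits. The fragment is therefore encoded by finitely many data per arity: the set of orbit-unions that are ep- (or pp-)definable, together with the combinatorial ``type structure'' telling how orbits of different arities relate under projections and identifications of coordinates. By Ryll-Nardzewski, the complete theory of an $\omega$-categorical structure is determined, up to isomorphism of models, by this orbit data. The type spaces are finite, so all of it is a countable sequence of finite objects. The remaining task is to normalize: choose, for each arity, a canonical labelling of orbits that does not depend on the presentation. We take the lexicographically least code over all relabellings compatible with the restriction maps. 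This is a Borel operation, since at each stage we minimize over a finite set, and it yields a Borel map into $2^{\omega}$. If the invariants agree, a back-and-forth argument (uniqueness of countable $\omega$-categorical models) gives a bijection $\theta$ that respects orbit-unions, hence conjugates the groups, monoids or clones. The converse direction is immediate.

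The main obstacle is the normalization step. Minimizing arity by arity may fail to be coherent across arities, because the lexicographically least labelling at arity $n$ need not extend to a minimal one at arity $n+1$. If this is a problem, I would instead code the whole theory of the expanded structure $(C;\text{definable relations})$ in a canonical relational language, one symbol per orbit-union ordered by a canonical enumeration. A fallback is to invoke the general result that isomorphism of $\omega$-categorical structures is smooth, where Borel classification follows from the theory being a complete invariant, and then reduce bi-definability to isomorphism of these canonical expansions. Checking that the map is Borel, i.e.\ that definability of each orbit-union is a Borel condition on the code of $\bC$, is routine: it is a countable Boolean combination of evaluations of formulas.
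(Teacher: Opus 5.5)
\textbf{Verdict: genuine gap.} The gap is the step you flag yourself, and neither of your fallbacks closes it.

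Your basic idea is sound. You encode a structure by three pieces of data: its orbits on $n$-tuples, the restriction maps along all maps $\{1,\dots,m\}\to\{1,\dots,n\}$, and the families of (ep/pp)-definable orbit-unions. Two such systems are isomorphic iff the structures are bi-definable, because a coherent identification of orbits in all arities lets a back-and-forth produce $\theta$.

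The trouble is that your invariant only compares \emph{finite truncations}. Equal least codes at every arity give, for each $n$, some isomorphism of the arity-$\le n$ parts. These isomorphisms need not cohere, whereas your back-and-forth needs one identification that is coherent across all arities. Nothing in the proposal bridges this, so completeness of the invariant is unproved.

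The fallbacks are circular. ``One symbol per orbit-union ordered by a canonical enumeration'' and ``canonical expansions'' presuppose exactly the presentation-independent indexing you have not constructed. Also, smoothness of isomorphism of $\omega$-categorical structures via their theories is only available once all structures live in one fixed countable language.

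The missing step is a compactness argument. You do not need the minimising labellings to cohere for the invariant to be well defined: just record the sequence of canonical codes of the arity-$\le n$ truncations, which is a Borel map. If two such sequences agree, then for each $n$ the set of isomorphisms between the arity-$\le n$ truncations is finite and nonempty. Restricting an isomorphism of arity-$\le(n+1)$ truncations gives one of arity-$\le n$ truncations. So K\"onig's lemma yields a coherent isomorphism of the full orbit systems, and your back-and-forth then goes through. With this added, your route is a valid, more combinatorial alternative.

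The paper avoids normalisation entirely by building, with no choices, a structure in a fixed countable language. This is $\bE_{\cS(\bA)}$ on $\Omega$, whose relation $O_{k,n}$ consists of all tuples $(x_1,\dots,x_k,f(x_1,\dots,x_k))$ with $x_i\in\Omega^n$ and $f\in\cS(\bA)$ of arity $k$. For topologically closed $\cA,\cB$, the isomorphisms $\bE_\cA\to\bE_\cB$ are exactly the bijections conjugating $\cA$ onto $\cB$. Moreover, $\bE_{\cS(\bA)}$ is $\omega$-categorical because $\Aut(\bA)$ preserves it. Hence $\bA\mapsto\Th(\bE_{\cS(\bA)})$ is a Borel complete invariant.
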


We then use model-theoretic tricks going back to Rubin~\cite{Rubin} and Behrisch and Vargas-Garc\'{i}a~\cite{BehrischGarcia} to lift the result from bi-definability to bi-interpretability, or in other words from conjugacy to topological isomorphism.

\subsubsection{Computing the model-complete core} 
Regarding model-comp\-lete cores, we give the first constructive proof of their existence for first-order reducts of finitely bounded homogeneous Ramsey structures; in fact, we show that  in that situation, we can compute the model-complete core algorithmically. This is one of the ingredients of our proof of Theorem~\ref{thm:dec_of_int}. 

\begin{theorem}\label{thm:mccore}
   Given a first-order reduct $\bC$ of a finitely bounded homogeneous Ramsey structure $\bA$ we can compute a finitely bounded homogeneous Ramsey structure $\bA'$ and a first-order reduct $\bC'$ that is the model-complete core of $\bC$.  
\end{theorem}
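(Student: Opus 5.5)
By Theorem~\ref{thm:MottetPinskerCores}, the model-complete core $\bC'$ exists as a first-order reduct of a finitely bounded homogeneous Ramsey substructure $\bA'\subseteq\bA$. Its relations are given by the same quantifier-free formulas as those of $\bC$. So the task reduces to \emph{finding} the right substructure $\bA'$, i.e., its finite set of bounds. My plan is an enumeration-and-verification algorithm.

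\emph{Step 1: parametrize the candidates.} A homogeneous substructure $\bA'\subseteq\bA$ in the same language is determined by its age, which is a subclass of $\mathrm{Age}(\bA)$ closed under substructures and amalgamation. If $\bA'$ is finitely bounded, this age is obtained from the bounds of $\bA$ by forbidding additionally a finite set $\mathcal F$ of finite structures; after enlarging $\mathcal F$ we may take them of size bounded by $\max(\text{arity},\text{size of bounds})$ or larger. We enumerate all finite sets $\mathcal F$ of finite $\tau$-structures in $\mathrm{Age}(\bA)$, in order of size. For each $\mathcal F$ we must decide whether the resulting class $\mathcal K_{\mathcal F}$ is an amalgamation class with the Ramsey property, and whether the corresponding reduct $\bC_{\mathcal F}$ is the model-complete core of $\bC$.

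\emph{Step 2: verify candidates.} First, amalgamation of a finitely bounded class is decidable: it suffices to check one-point amalgamation over structures of size bounded in terms of the bounds. Second, we check that $\bC_{\mathcal F}$ and $\bC$ are homomorphically equivalent. A homomorphism $\bC_{\mathcal F}\to\bC$ exists by inclusion, provided $\mathcal K_{\mathcal F}\subseteq\mathrm{Age}(\bA)$, and $\omega$-categoricity then gives the rest. For $\bC\to\bC_{\mathcal F}$ we use compactness and the canonisation technique available from the Ramsey property of $\bA$. Such a homomorphism exists iff there is one that is canonical from $\bA$ to $\bA_{\mathcal F}$, which here means canonical with respect to the relations of the ground structures. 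Canonical functions are described by finite data, namely a map from types of $\bA$ of bounded arity to types of $\bA_{\mathcal F}$, which is checkable on finite structures. This is the approach used in~\cite{BPT-decidability-of-definability}, and it requires $\bA_{\mathcal F}$ to be Ramsey, or at least $\bA$ Ramsey together with $\bA_{\mathcal F}$ finitely bounded homogeneous. Third, we check that $\bC_{\mathcal F}$ is a model-complete core. This means every endomorphism is locally an automorphism. Equivalently, and again via canonisation, every canonical endomorphism of $\bC_{\mathcal F}$ with respect to $\bA_{\mathcal F}$ preserves all orbits of $\bC_{\mathcal F}$ and their complements up to arity bounded by the arity of $\bA$. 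This again reduces to finitely many finite checks on type-maps.

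\emph{Step 3: termination.} Theorem~\ref{thm:MottetPinskerCores} guarantees that some $\mathcal F$ passes all tests, so the search halts. Uniqueness of model-complete cores ensures that whatever passes is correct. The expected main obstacle is the Ramsey property of the candidate $\bA_{\mathcal F}$, which is not known to be decidable in general. I would try to avoid testing it. All canonisation arguments take place in $\bA$, using only that $\bA$ is Ramsey. Moreover, the substructure produced by the proof of Theorem~\ref{thm:MottetPinskerCores} is the image of a canonical endomorphism of $\bC$ with respect to $\bA$, hence Ramsey automatically. So I would enumerate the finitely many type-behaviours of canonical endomorphisms of $\bC$ instead of arbitrary $\mathcal F$. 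For each behaviour I would compute the age of its image, which is finitely bounded and computable from the behaviour, and then run the core test from Step 2 on the image.
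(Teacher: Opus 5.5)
Your final algorithm (enumerate behaviours of canonical endomorphisms of $\bC$ and work with their images) points in the same direction as the paper. It has two genuine gaps, and these are exactly the two points the paper's proof is built around.

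\textbf{First gap: images of arbitrary canonical endomorphisms.} You claim the image of any canonical endomorphism has a finitely bounded age with a Ramsey Fra\"{i}ss\'{e} limit (``hence Ramsey automatically''). This is false in general. What makes it true for the structure in Mottet--Pinsker is \emph{range-rigidity}. If the behaviour of $f$ is idempotent, the age of its range is described by which types of size at most $k$ are fixed by the behaviour. Lemma~9 of~\cite{MottetPinskerCores} turns this into finite boundedness, amalgamation and the Ramsey property.

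For a general behaviour, the age of the image is merely the class of images of finite substructures of $\bA$. For instance, let $\bA$ be the Fra\"{i}ss\'{e} limit of the free superposition of ordered graphs $(E,\prec)$ and partial orders with a linear extension. A canonical endomorphism of the reduct by $\prec$ can send comparable pairs to edges and make the partial order trivial. The age of its image is then essentially the class of finite comparability graphs. That class is not finitely bounded, since odd cycles of length at least $5$ are minimal obstructions, and it is not an amalgamation class.

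Relatedly, your Step~2 claim that amalgamation of finitely bounded classes is decidable by bounded one-point checks is not known in general; it is known for binary signatures. The first gap is repairable by passing to idempotent powers of behaviours, but that step has to be made.

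\textbf{Second gap: the core test.} Your core test is neither shown to be effective nor correctly formulated.

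\begin{itemize}
\item Suppose ``orbits of $\bC_{\mathcal F}$'' means $\Aut(\bC_{\mathcal F})$-orbits. Then checking the condition on type maps requires knowing which unions of $\bA_{\mathcal F}$-orbits form $\Aut(\bC_{\mathcal F})$-orbits. That is essentially deciding first-order definability in $\bC_{\mathcal F}$, which the paper notes is open in general. Also, $\bC_{\mathcal F}$ is not homogeneous, so there is no reason that arities bounded by those of $\bA$ suffice.
\item Suppose instead it means $\bA_{\mathcal F}$-orbits. Then the test is not necessary for being a core. The betweenness relation on $\bQ$, presented over $(\bQ,<)$, is a model-complete core, yet $x\mapsto -x$ is a canonical automorphism that does not preserve $<$. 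In that example no candidate would pass.
\end{itemize}

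\textbf{How the paper avoids a core test.} The paper needs no core test at all.
\begin{itemize}
\item It takes range-rigid canonical endomorphisms $f$ whose range meets an inclusion-minimal set of $\bA$-orbits.
\item Minimality forces the resulting $\bC'$ to be optimally presented over $\bA'$: a canonised endomorphism $g$ of $\bC'$ missing an orbit would make $g\circ f$ have smaller range.
\item Lemma~\ref{lem:optimalpresentation} then shows that any two such candidates, being homomorphically equivalent, are fo-bi-definable via an isomorphism of the reducts.
\item Theorem~\ref{thm:MottetPinskerCores} guarantees that one candidate is the model-complete core, so every candidate is.
\end{itemize}
This uniqueness of optimal presentations is the idea your proposal is missing.
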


The role of this theorem in the proof of Theorem~\ref{thm:biDefDec} is not only the computation of the model-complete core of a given structure. In fact, given $\bC$  we can compute an ``optimal''  finitely bounded homogeneous Ramsey structure $\bA'$ that yields the core $\bC'$ of $\bC$. We call the structure $\bA'$ the \emph{optimal presentation} of $\bC'$, and show that it is unique up to fo-bi-definability (Lemma~\ref{lem:optimalpresentation}). We believe that this insight is of independent interest in the quest to prove Conjecture~\ref{conj:bp}.

\subsection{Discussion}
We show decidability of bi-definability for model-complete cores of structures under the assumptions of Conjecture~\ref{conj:bp} 
while additionally assuming no algebraicity, which is no loss of generality, and the Ramsey property, which is a standard assumption and might be no loss of generality. Although not directly relevant for the study of the conjecture, it would also be interesting from a purely mathematical point of view whether 
bi-definability can be decided for the original structures rather than their model-complete cores: as will become apparent, our proof relies crucially on going to the model-complete core first. 

The same can be asked for the case of 
bi-interpretability: can we decide whether two given structures satisfying the assumptions of Theorem~\ref{thm:dec_of_int} are bi-interpretable? Moreover, can we drop the transitivity assumption in the primitive positive case?

Regarding the descriptive viewpoint, we can ask similar questions: is bi-interpretability smooth among $\omega$-categorical structures without algebraicity? Currently we only know this in the first-order case, by the breakthrough result of Paolini and Nies~\cite{PaoliniNies}. Note the interesting contrast of this fact with the discussion after Theorem~\ref{thm:DecOfDef}, which leaves precisely the first-order case open.

We remark that it is folklore that bi-definability as well as bi-interpretability are decidable for finite structures. The least obvious case here is that of  decidability of bi-interpretability in the primitive positive case: it is a consequence of the proof of  Birkhoff's theorem~\cite{Birkhoff}
 providing a bound on the power required in an interpretation.

\subsection{Organization of the paper}

The paper is organised as follows. In Section~\ref{sect:Prelims} we introduce relevant definitions and notions. In Section~\ref{sect:mccores} we show that, in the correct setting, we can compute the model-complete core of an input structure. We use this computability of the model-complete core in Section~\ref{sect:BiDefinability} to show that the equivalence relation of being bi-definable is tractable. In Section~\ref{sect:BiInterpretability} we extend work of Rubin to derive the bi-interpretability results from the bi-definability results obtained in the previous section. In the course of this we rely on a positive answer to a question of Bodirsky and Junker which is established in Section~\ref{sect:QuestionBodJun}. 

\section{Preliminaries}\label{sect:Prelims}

\subsection{The topology of pointwise convergence}\label{subsect:topo_pw}

Let $\Omega, \Delta$ be countable sets. The set of functions $\Omega \to \Delta$ will be denoted by $\Delta^{\Omega}$; the \emph{topology of pointwise convergence} on $\Delta^\Omega$ is the product topology on $\Delta^{\Omega}$ if we equip $\Delta$ with the discrete topology. It can be characterized as follows: $f\in \Delta^{\Omega}$ lies in the closure of $F \subseteq \Delta^{\Omega}$ if and only if, for every finite set $S\subseteq \Omega$ there is some $f' \in F$ such that $f$ and $f'$ agree when restricted to $S$. The induced subspace topology on subsets of $\Delta^\Omega$ is referred to as the topology of pointwise convergence as well. Spaces of functions are tacitly assumed to be equipped with the topology of pointwise convergence.

\subsection{Actions of groups, monoids, and clones}

Let $\Omega$ be a countable set. A \emph{permutation group on $\Omega$} is a subgroup $G$ of the group of all permutations of $\Omega$. We also say that \emph{$G$ acts on $\Omega$} and write $G\curvearrowright \Omega$ in this situation. Given a finite tuple $y \in \Omega^{< \omega}$, the \emph{stabiliser of $y$}, $G_y$, is the subgroup of $G$ consisting of all permutations that fix $y$ pointwise. 
More generally, for a set $X\subseteq \Omega$, $G_X$ denotes the subgroup of $G$ consisting of all permutations that fix $X$ pointwise.
An element $x\in \Omega$ is \emph{algebraic over $y$} if the orbit of $x$ under $G_y\curvearrowright \Omega$ is finite. The action $G\curvearrowright \Omega$ is \emph{without algebraicity} if for all $x\in \Omega, y \in \Omega^{<\omega}$, $x$ is algebraic over $y$ if and only if the element $x$ is an entry of the tuple $y$. 

A \emph{transformation monoid on $\Omega$} is a submonoid $M$ of $\Omega^\Omega$. We also say that \emph{$M$ acts on $\Omega$} and write $M \curvearrowright \Omega$ in this situation. A \emph{function clone on $\Omega$} is a subset $C$ consisting of finitary operations on $\Omega$, i.e., functions of the form $\Omega^n \to \Omega$ for $n\geq 1$, that contains all projection maps, and is closed under composition in the sense that for all $f,g_1,\dots,g_k \in C$ with $f \colon \Omega^k \to \Omega$ (i.e., $f$ has \emph{arity $k$}) it holds
\begin{equation*}
    f \circ (g_1,\dots, g_k) \in C.
\end{equation*}
We also say that \emph{$C$ acts on $\Omega$} and write $C \curvearrowright \Omega$ in this situation. If $C,D$ are function clones on $\Omega,\Omega'$ a bijection $\phi\colon C \to D$ is a \emph{clone isomorphism} if it preserves arities and compositions, and for all $n\geq i \geq 1$ sends the $i$th projection of arity $n$ to the $i$th projection of arity $n$. 

Let $(G / M / C)\curvearrowright \Omega$ be a permutation group / transformation monoid / function clone. A set $U \subseteq \Omega^n$, for $n \geq 1$, is said to be 
\begin{itemize}
    \item \emph{$G$-invariant}, if for all $g \in G$ and $x \in U$ it holds  $gx \in U$, evaluated coordinate-wise;
    \item \emph{$M$-invariant}, if for all $e \in M$ and $x \in U$ it holds  $ex \in U$, evaluated coordinate-wise;
    \item \emph{$C$-invariant}, if for all $f\in C$, and all $x_1,\dots,x_k \in U$, where $k$ is the arity of $f$, it holds  $f(x_1,\dots, x_k) \in U$, evaluated coordinate-wise.
\end{itemize}
Observe that if $U\subseteq \Omega^n$, for $n\geq 1$, is a $(G / M / C)$-invariant subset and $\sim$ is a $(G / M /C)$-invariant equivalence relation on $U$, then $G / M / C$ naturally induces a permutation group / transformation monoid / function clone on the set  $U/_\sim$ of equivalence classes; we write $(G / M / C) \curvearrowright U/_\sim$ for the induced permutation group / transformation monoid / function clone  on the quotient $U/_\sim$.

Permutation groups and transformation monoids are naturally equipped with the topology of pointwise convergence. The topology of pointwise convergence on a function clone $C\curvearrowright \Omega$ refers to the sum topology of the spaces $C \cap \Omega^{\Omega^n}$, $n \geq 1$. These topologies turn them into topological groups / monoids / clones, in the sense that composition is continuous, and, for groups, also the inversion map is. 
We say that a permutation group on $\Omega$ is \emph{topologically closed} if it is closed in the space of all  permutations of $\Omega$, a transformation monoid on $\Omega$ is \emph{topologically closed} if it is closed in the space of unary operations of $\Omega$, and a function clone on $\Omega$ is \emph{topologically closed} if it is closed in the space of finitary operations on $\Omega$.

\begin{definition}
    An \textit{isomorphism of permutation group actions} $G\curvearrowright\Omega$ and $H\curvearrowright\Omega'$ is a bijection $\theta\colon\Omega\to\Omega'$ with the property that the assignment $\alpha\mapsto\alpha^\theta:=\theta \circ \alpha \circ \theta^{-1}$ is an isomorphism from $G$ onto $H$. We write $G^\theta$ for the conjugate action (by $\theta$) of $G$; then this says that $G^\theta=H$. Similarly, isomorphisms between actions of closed transformation monoids and closed function clones are defined. 
\end{definition}

We point out that this notion of isomorphism between permutation groups $G\curvearrowright \Omega$ and $H\curvearrowright \Omega'$ implies that $G,H$ are \emph{isomorphic as topological groups}, since the map $\alpha\mapsto\alpha^\theta$ is a group isomorphism $G\to H$ that is a homeomorphism of the underlying topological spaces; the analogous statements hold for transformation monoids and function clones.

\subsection{Structures and symmetries}

We assume structures to be countable and in a relational signature, and denote them by blackboard bold letters $\bA$. If $\bA$ is some structure and $R$ is some relational symbol of arity $n$ in the signature of $\bA$, we denote the interpretation of $R$ in $\bA$, some subset of its $n$-th power, by $R^{\bA}$. Abusing notation, we usually do not distinguish between the symbol $R$ and the interpretation $R^\bA$ when the structure is clear from the context.
If not specified otherwise the domain of a structure is assumed to $\Omega$, a countable set. 

Let $\bA, \bB$ be structures in the same signature with domains $\Omega,\Omega'$. A \emph{homomorphism} from $\bA$ to $\bB$ is a function $f\colon \Omega \to \Omega'$ that \emph{preserves} all relations, i.e., for every relational symbol $R$ it holds $f(R^\bA) \subseteq R^\bB$, or by abuse of notation $f(R) \subseteq R$. An \emph{isomorphism} $\bA\to\bB$ is an invertible homomorphism whose inverse is a homomorphism $\bB \to \bA$.
An \emph{endomorphism} of a $\bA$ is a homomorphism $\bA \to \bA$. An \emph{automorphism} of $\bA$ is an isomorphism $\bA\to \bA$. A \emph{polymorphism (of arity $n\geq 1$)} is a map $f\colon \Omega^n \to \Omega$, that \emph{preserves all relations of $\bA$} in the sense that for all $x_1,\dots,x_n \in R^\bA$ it holds $f(x_1,\dots,x_n)\in R^\bA$.

We will consider various sets of symmetries of a structure $\bA$:
\begin{itemize}
    \item by $\Aut(\bA)$ we denote the group of automorphisms of $\bA$, a permutation group on $\Omega$;
    
    \item by $\End(\bA)$ we denote the monoid of endomorphisms of $\bA$, a transformation monoid on $\Omega$;
    
    \item by $\Pol(\bA)$ we denote the set of polymorphisms of $\bA$, a function clone on $\Omega$.
\end{itemize}

It is not hard to see that $\Aut(\bA)$ / $\End(\bA)$ / $\Pol(\bA)$ are topologically closed, and that also the converse is true: every topologically closed permutation group / transformation monoid / function clone on $\Omega$ is the automorphism group/ endomorphism monoid / polymorphism clone of some structure on $\Omega$.

A structure $\bA$ is a \emph{model-complete core} if $\Aut(\bA)$ is dense in $\End(\bA)$, i.e., for every finite subset $S$ of the domain of $\bA$ and every endomorphism $e$, the restriction of $e$ to $S$ extends to an automorphism of $\bA$.

\subsection{Model theory}

A permutation group $G\curvearrowright\Omega$ is \emph{oligomorphic} if for all $n\geq 1$ the induced permutation group $G\curvearrowright \Omega^n$ has finitely many orbits. A structure $\bA$ is \emph{$\omega$-categorical} if $\Aut(\bA) \curvearrowright \Omega$ is oligomorphic.

A structure $\bA$ is \emph{homogeneous} if every isomorphism between finite (induced) substructures of $\bA$ extends to an automorphism of $\bA$. The \emph{age} of $\bA$ is the class of its finite induced substructures up to isomorphism. The structure $\bA$ is called \emph{finitely bounded} if there is a finite set $F$ of finite structures such that for any finite structure $\mathbb X$ we have: $\mathbb X$ belongs to the age of $\bA$ if and only if it contains no member of $F$ as an induced substructure. The structure $\bA$ has \emph{no algebraicity} if the action of its automorphism group $\Aut(\bA) \curvearrowright \Omega$ is without algebraicity.

A first-order (fo) formula is said to be \emph{existential positive~(ep)} if it is of the form $\exists x_1\cdots\exists x_n \phi$, where $\phi$ is quantifier free and without negations; it is said to be \emph{primitive positive~(pp)} if it is of the form $\exists x_1\cdots\exists x_n \phi$, where $\phi$ is a conjunction of atomic formulas. An $n$-ary relation $R$ on $\Omega$ (this simply means that $R\subseteq \Omega^n$) is \emph{(fo/ep/pp)-definable} in $\bA$ if there is some (fo/ep/pp)-formula $\phi$ in the signature of $\bA$ such that $a \in R$ if and only if $\phi(a)$ holds. 

In the following theorem, the first-order statement is a well-known consequence of the Theorem of Engeler, Ryll-Nardzewski and Svenonius (see, e.g.,~\cite{Hodges}), the existential positive statement is folklore, and the primitive positive statement is due to~\cite{BodirskyNesetrilJLC}.

\begin{theorem}
    Let $\bA$ be an $\omega$-categorical structure and let $R$ be a relation on $\Omega$.
    $R$ is (fo/ep/pp)-definable in $\bA$ if and only if it is preserved by all automorphisms / endomorphisms / polymorphisms of $\bA$.
\end{theorem}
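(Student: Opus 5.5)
Let $G=\Aut(\bA)$, $M=\End(\bA)$ and $C=\Pol(\bA)$. I would prove the three equivalences in parallel. The forward direction is routine in each case, and the backward direction goes through orbits of $G$ on $n$-tuples, which is where $\omega$-categoricity is used.

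\emph{Forward direction.} A relation $R$ defined by a first-order formula is preserved by automorphisms. This follows by induction on formulas, since automorphisms are bijections preserving all relations and their complements. If $\phi$ is existential positive, an endomorphism $e$ sends witnesses of the existential quantifiers to witnesses and preserves positive atoms, disjunctions and conjunctions. If $\phi$ is primitive positive and $f$ is an $n$-ary polymorphism, apply $f$ coordinatewise to tuples $a_1,\dots,a_n\in R$ together with their witness tuples. The result satisfies every atomic conjunct, so $f(a_1,\dots,a_n)\in R$.

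\emph{Backward direction, first-order case.} By the Engeler--Ryll-Nardzewski--Svenonius theorem, each of the finitely many orbits of $G\curvearrowright\Omega^n$ is first-order definable. A $G$-invariant $R\subseteq\Omega^n$ is a finite union of such orbits, hence definable by the disjunction of their defining formulas.

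\emph{Backward direction, existential positive case.} For each orbit $O$ contained in $R$, pick $a\in O$ and let $\psi_a$ be the existential positive formula describing the positive atomic diagram of a suitably large finite tuple $(a,b)$. Then $\psi_a(x)=\exists y\,\bigwedge\{\text{atoms true on }(a,b)\}$. The key claim is that if $c$ satisfies $\psi_a$ for every finite extension $b$, then there is an endomorphism $e$ with $e(a)=c$. To prove it, enumerate $\Omega$ and use König's lemma: there are finitely many $G_c$-orbit classes of partial homomorphisms from finite sets containing $a$ and sending $a\mapsto c$, by oligomorphicity. A compactness argument then gives a total homomorphism. Since oligomorphicity leaves only finitely many relevant candidates, we can choose finitely many witnesses $b$ making $\psi_a$ sharp. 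Taking $R$ to be invariant under $M$, the disjunction $\bigvee_a\psi_a$ then defines $R$.

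\emph{Backward direction, primitive positive case.} The argument is analogous but works in powers. Let $R$ be preserved by $C$, and suppose $c\notin R$. Choose representatives $a_1,\dots,a_m$ of the finitely many orbits in $R$. Consider the structure on $\Omega^m$ and the pp-formula given by the atomic diagram of $\Omega^m$ restricted to a finite part, with free variables at the columns of $(a_1,\dots,a_m)$. If every such finite pp-formula were satisfied by $c$, then the compactness/König argument above would yield a homomorphism $\bA^m\to\bA$ sending $(a_1,\dots,a_m)$ to $c$. This homomorphism is a polymorphism, and combined with $G$-invariance it would put $c$ in $R$, a contradiction. Oligomorphicity of $G$ on the finitely many orbit types lets us pick one finite formula excluding all orbits outside $R$ at once. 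The conjunction over these finitely many formulas is then a single pp-formula defining $R$.

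The main obstacle is this compactness step: extending finite partial homomorphisms to total ones, while keeping only finitely many formulas. I would handle it via the König-lemma argument over $G$-orbits of finite partial maps, using oligomorphicity of $G$ on finite tuples to get finite branching.
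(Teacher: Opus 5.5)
Your proposal is correct, and it is essentially the standard argument behind the results that the paper only cites rather than proves: Ryll-Nardzewski for the fo case, the folklore K\"onig/compactness argument plus stabilisation of $G$-invariant sets for the ep case, and the Bodirsky--Ne\v{s}et\v{r}il construction in the power $\bA^m$ for the pp case. One detail to tighten: the paper allows infinite signatures, so ``the positive atomic diagram'' should be read as its finite sub-conjunctions; the finiteness of $G$-invariant (resp.\ $G_c$-invariant) sets then still yields a single defining formula and nonempty levels in the K\"onig tree.
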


Two structures $\bA,\bB$ are said to be \emph{(fo/ep/pp)-interdefinable} if they (fo/ep/pp)-define the same sets of relations; for $\omega$-categorical structures, the above theorem immediately implies that this is equivalent to them having the same automorphism groups / endomorphism monoids / polymorphism clones. The first step of abstraction from this notion is the following. 

\begin{definition}
Let $\bA,\bB$ be structures with domains $\Omega,\Omega'$. A bijection $\theta\colon \Omega\to\Omega'$ is a \emph{(fo/ep/pp)-bi-definition} of $\bA$ and $\bB$ if $\theta$ and $\theta^{-1}$ map (fo/ep/pp)-definable sets onto (fo/ep/pp)-definable sets.
\end{definition}

\begin{lemma}\label{lem:biDefAreActionIsos}
    Two $\omega$-categorical structures $\bA,\bB$ are (fo/ep/pp)-bi-definable if and only if their automorphism groups / endomorphism monoids / polymorphism clones have isomorphic actions.
\end{lemma}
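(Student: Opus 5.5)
The plan is to prove both directions directly from the characterization theorem stated just above: for an $\omega$-categorical structure, a relation is (fo/ep/pp)-definable exactly when it is preserved by all automorphisms / endomorphisms / polymorphisms. The three cases are handled uniformly. Let $S(\bA)$ denote $\Aut(\bA)$, $\End(\bA)$ or $\Pol(\bA)$ respectively, and say "definable" for the matching fragment. For a bijection $\theta\colon\Omega\to\Omega'$ and an $n$-ary operation $f$ on $\Omega$, write $f^\theta(y_1,\dots,y_n)=\theta f(\theta^{-1}y_1,\dots,\theta^{-1}y_n)$.

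The basic identity driving everything is the following. For $R\subseteq\Omega^m$, $f$ preserves $R$ if and only if $f^\theta$ preserves $\theta(R)$, where $\theta$ is applied coordinatewise. This holds because $f^\theta(\theta x_1,\dots,\theta x_n)=\theta f(x_1,\dots,x_n)$.

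\textbf{Forward direction.} Suppose $\theta$ is a bi-definition. Take $g\in S(\bA)$ and a definable relation $R'$ of $\bB$. Then $\theta^{-1}(R')$ is definable in $\bA$, so $g$ preserves it. By the identity, $g^\theta$ preserves $R'$. Since $R'$ was an arbitrary definable relation of $\bB$ (in particular any relation of $\bB$), $g^\theta\in S(\bB)$.

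Symmetrically, $h^{\theta^{-1}}\in S(\bA)$ for every $h\in S(\bB)$. Hence $g\mapsto g^\theta$ is a bijection $S(\bA)\to S(\bB)$. It respects composition, arities and projections, and is therefore an isomorphism of actions. For groups one can alternatively note that bijectivity already forces it to be a group isomorphism.

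\textbf{Backward direction.} Suppose $\theta$ satisfies $S(\bA)^\theta=S(\bB)$. Let $R$ be definable in $\bA$. Then $R$ is preserved by all of $S(\bA)$, so by the identity $\theta(R)$ is preserved by all $h=g^\theta\in S(\bB)$. By the characterization theorem applied to the $\omega$-categorical structure $\bB$, $\theta(R)$ is definable in $\bB$. The same argument with $\theta^{-1}$ handles definable sets of $\bB$. So $\theta$ is a bi-definition.

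\textbf{Main subtlety.} The only delicate point is bookkeeping in the forward direction: we need $g^\theta$ to preserve the basic relations of $\bB$. This holds because basic relations are trivially definable in every fragment, including pp. Beyond that, the argument requires $\omega$-categoricity of both $\bA$ and $\bB$. It is used in the backward direction to invoke the characterization theorem, and it is not needed in the forward direction.
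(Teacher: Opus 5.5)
Your proof is correct and follows the paper's argument. The paper also gets ``isomorphic actions implies bi-definition'' from the characterization theorem for $\omega$-categorical structures, and for the converse shows that $\theta e\theta^{-1}$ preserves every relation of $\bB$, since each such relation is $\theta(X)$ for some $X$ definable in $\bA$. The one point you leave implicit is that, in the automorphism case, $g^\theta$ really is an automorphism: its inverse $(g^{-1})^\theta$ preserves all relations of $\bB$ by the same argument.
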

\begin{proof}
    We show this for ep-bi-definability and endomorphism monoids. Let $\Omega,\Omega'$ be the domains of $\bA,\bB$.
    
    Since $\bA,\bB$ are $\omega$-categorical, ep-definable relations of $\bA,\bB$ are precisely the relations that are invariant under the actions of their respective endomorphism monoids. Thus, an isomorphism $\theta$ of the actions $\End(\bA)\curvearrowright \Omega$ and $\End(\bB) \curvearrowright \Omega'$ is an ep-bi-definition of $\bA$ and $\bB$. 

    Conversely, let $\theta$ be an ep-bi-definition of $\bA$ and $\bB$. By the symmetry of the situation it suffices to show that for all $e \in \End(\bA)$, it holds $\theta e \theta^{-1} \in \End(\bB)$, i.e., that it preserves all relations of $\bB$. This is the case because every relation of $\bB$ is of the form $\theta(X)$, where $X$ is an ep-definable relation in $\bA$, and $e$ preserves ep-definable relations.
\end{proof}

Abstracting even further, one arrives at the notion of bi-inter\-pretations.
Let $\bA,\bB$ be structures with domains $\Omega,\Omega'$. A \emph{(fo/ep/pp)-interpretation of $\bB$ in $\bA$} is a partial surjective map $\Omega^n \to \Omega'$ such that preimages of relations definable by atomic formulas over $\bB$ are (fo/ep/pp)-definable in $\bA$. The interpretation $\Gamma$ is said to be a 
\emph{(fo/ep/pp)-bi-interpretation} if there is a (fo/ep/pp)-interpretation $\Delta$ of $\bA$ in $\bB$ such that the graphs of the naturally defined compositions $\Delta \circ \Gamma$ and  $\Gamma\circ\Delta$ are (fo/ep/pp)-definable in $\bA,\bB$.

It is not hard to see that (fo/ep/pp)-bi-interpretations of $\bA,\bB$ induce topological isomorphisms between the automorphism groups / endomorphism monoids / polymorphism clones of $\bA,\bB$ (see, e.g.,~\cite{Topo-Birk}). But also the converse is true, unconditionally even in two of the three cases. This is due to Coquand, see~\cite{AhlbrandtZiegler} in the fo-setting, due to Bodirsky and Junker~\cite{BodirskyJunker} in the ep-setting, and due to Bodirsky and Pinsker~\cite{Topo-Birk} in the pp-setting.

\begin{theorem}\label{thm:BiIntVsTopIso}
    Let $\bA,\bB$ be $\omega$-categorical structures.
    \begin{itemize}
        \item If $\bA,\bB$ have topologically isomorphic automorphism groups / polymorphism clones, then they are (fo/pp)-bi-interpretable;
        \item If $\bA,\bB$ do not contain constant endomorphisms, the analogous implication holds for endomorphism monoids and ep-bi-interpretability.
    \end{itemize}
\end{theorem}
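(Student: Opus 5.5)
The plan is to handle the three cases (automorphism groups, endomorphism monoids, polymorphism clones) by one construction. From a topological isomorphism $\xi$ we build an interpretation $\Gamma$ by ``evaluating symmetries at a finite tuple''. We then check definability of preimages via the invariance characterisation stated in the theorem above (Engeler/Ryll-Nardzewski/Svenonius for fo, folklore for ep, Bodirsky--Ne\v{s}et\v{r}il for pp).

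\emph{The group case.} Let $\xi\colon\Aut(\bA)\to\Aut(\bB)$ be a topological isomorphism.
\begin{enumerate}
\item Since $\Aut(\bB)$ is oligomorphic, $\Omega'$ has finitely many orbits; fix a finite tuple $b$ containing a representative of each.
\item The stabiliser $\Aut(\bB)_b$ is open, so by continuity of $\xi$ its preimage is open. Hence it contains $\Aut(\bA)_a$ for some finite tuple $a$, say of length $n$.
\item Define a partial map $\Omega^{n}\to(\Omega')^{|b|}$ on the orbit of $a$ by $g a\mapsto \xi(g)b$. It is well defined: if $ga=ha$, then $h^{-1}g\in\Aut(\bA)_a$, so $\xi(h^{-1}g)$ fixes $b$.
\item Projecting to the coordinates of $b$ yields partial maps onto the orbits of the entries of $b$. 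We glue them into one partial surjection $\Gamma\colon \Omega^m\to\Omega'$, padding and using equality patterns among extra coordinates to tag which coordinate is meant. Each piece is invariant, hence definable.
\item By construction $\Gamma(gx)=\xi(g)\Gamma(x)$. So the preimage of any $\Aut(\bB)$-invariant relation is $\Aut(\bA)$-invariant, hence fo-definable by $\omega$-categoricity.
\item Doing the same with $\xi^{-1}$ gives $\Delta$. The graphs of $\Delta\circ\Gamma$ and $\Gamma\circ\Delta$ are invariant under $\Aut(\bA)$, respectively $\Aut(\bB)$, by the equivariance identities, so they are definable.
\end{enumerate}

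\emph{The clone and monoid cases.} We run the same argument with $\xi\colon\Pol(\bA)\to\Pol(\bB)$, or with $\End$ in the monoid case. Restricted to unary parts, $\xi$ gives a topological isomorphism of the unary monoids, and it restricts to the invertible elements, i.e.\ to the automorphism groups. We would like to define $\Gamma$ on all of $\{h(a): h \text{ unary in } \Pol(\bA)\}$ by $h(a)\mapsto \xi(h)(b)$. The intended equivariance is
\[\Gamma(f(x_1,\dots,x_k))=\xi(f)(\Gamma(x_1),\dots,\Gamma(x_k)).\]
It comes from writing $f(g_1a,\dots,g_ka)=(f\circ(g_1,\dots,g_k))(a)$ and using that $\xi$ preserves composition. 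Equivariance then makes preimages of $\Pol(\bB)$-invariant relations $\Pol(\bA)$-invariant, hence pp-definable; the endomorphism version gives ep-definability. The compositions $\Delta\circ\Gamma$ and $\Gamma\circ\Delta$ are treated as before.

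\emph{The main obstacle.} The hard step is well-definedness: $h(a)=h'(a)$ must imply $\xi(h)(b)=\xi(h')(b)$ for non-invertible $h,h'$. The stabiliser trick does not apply directly.
\begin{itemize}
\item I would first choose $a$ using continuity of $\xi$ on the unary part at each of finitely many representatives. By $\omega$-categoricity there are only finitely many behaviours of $h$ on $a$ up to automorphisms.
\item This should yield that $\xi(h)(b)$ depends only on $h(a)$. The argument writes $h'$ as $h$ precomposed or postcomposed with elements close to automorphisms, in the spirit of Bodirsky--Pinsker's uniform continuity argument.
\item In the monoid case this is exactly where the absence of constant endomorphisms enters. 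A constant endomorphism can collapse $a$ while $\xi$ of it need not collapse $b$. Without constants, the images $h(a)$ stay in finitely many orbit types that the argument can control, and the domain of $\Gamma$ remains an ep-definable set.
\end{itemize}
I expect this step to need the most care.
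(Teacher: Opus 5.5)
The paper gives no proof of this statement; it cites Coquand (via Ahlbrandt--Ziegler) for groups, Bodirsky--Junker for monoids, and Bodirsky--Pinsker for clones. Your group case is Coquand's argument and is correct. The monoid and clone cases have two genuine gaps.

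\textbf{Well-definedness.} You leave well-definedness of $h(a)\mapsto\xi(h)(b)$ open, and the mechanism you sketch does not close it. Choosing $a$ via continuity at representatives of the finitely many behaviours of $h$ on $a$ is circular: the representatives depend on $a$, and enlarging $a$ by the continuity witnesses changes the behaviours. Also, two operations agreeing on $a$ need not be related by composition with near-automorphisms. The missing idea is \emph{uniform} continuity: there is a finite $A\subseteq\Omega$ such that any two $m$-ary $f,f'$ that agree on $A^m$ satisfy $\xi(f)(b)=\xi(f')(b)$. This follows from continuity plus oligomorphicity by compactness. Take counterexamples $f_i,f'_i$ for an exhausting sequence $A_i$ and left-translate both by automorphisms $\alpha_i$; the images still differ at $b$, since $\xi(\alpha_i)$ is a bijection. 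Using finiteness of orbits and K\"onig's lemma, extract a subsequence along which both $\alpha_i f_i$ and $\alpha_i f'_i$ converge to some $f$ in the closed clone or monoid. This contradicts continuity of $\xi$ at $f$. If $a$ enumerates such an $A$, well-definedness is immediate, and constant endomorphisms play no role in it.

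\textbf{Gluing.} Your step~4 gluing does not transfer. A domain that is a union of pieces separated by equality patterns needs disjunctions, which are not pp, and disequalities, which non-injective endomorphisms do not preserve. So the glued domain is not pp-definable (resp.\ ep-definable), and operations that mix different tags break equivariance.

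In the clone case the standard fix avoids gluing by using $m$-ary operations with $m=|b|$. Take $U=\{f|_{A^m} : f\in\Pol(\bA)^{(m)}\}$ and $\Gamma(f|_{A^m}):=\xi(f)(b_1,\dots,b_m)\in\Omega'$. This map is onto because $\beta\circ\pi_i\in\Pol(\bB)$ for every $\beta\in\Aut(\bB)$. It is equivariant because $\xi$ preserves composition. Its domain $U$ is $\Pol(\bA)$-invariant, hence pp-definable.

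Monoids have no $m$-ary operations, and \emph{this} is where the absence of constant endomorphisms is needed. A similar compactness argument gives a finite tuple $c$ whose forward orbit $E=\{e(c): e\in\End(\bA)\}$ contains no constant tuple. Let $T_i$ be the set of tuples whose $i$-th block lies in $E$ while all other blocks are constant. These sets are ep-definable, $\End(\bA)$-invariant and pairwise disjoint, so they can serve as tags. Your diagnosis that constants threaten well-definedness is therefore misplaced: they threaten the gluing.
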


\subsection{Ramsey theory}
Let $\bA,\bB$ be structures with domains $\Omega,\Omega'$, and let $f\colon \Omega\to\Omega'$ be any function. Then $f$ is called \emph{$(\bA,\bB)$-canonical} if the image of any $\Aut(\bA)$-orbit of a tuple in $\bA$ is contained in an $\Aut(\bB)$-orbit. This implies that $f$ induces a mapping from $\Aut(\bA)$-orbits to $\Aut(\bB)$-orbits; this mapping is called the \emph{behaviour} of $f$. If $\bA,\bB$ are homogeneous and in a finite signature, and $k$ is a common upper bound for the arities of their relations, then this behaviour is completely determined by its restriction to orbits of $k$-tuples; hence all potential behaviours can be enumerated in a finite list. If the structures are moreover finitely bounded, then one can even determine the behaviours that are  actually possible.

\begin{theorem}[Bodirsky, Pinsker, and Tsankov~\cite{BPT-decidability-of-definability}]\label{thm:computingBehaviour}
There is an algorithm which, given two finitely bounded homogeneous structures $\bA, \bB$, outputs all possible behaviours of canonical functions from $\bA$ to $\bB$ (the number of such behaviours is finite).
\end{theorem}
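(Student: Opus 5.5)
The plan is to reduce the problem to a finite check. I would first enumerate all candidate behaviours, and then decide for each candidate, by inspecting finitely many finite structures, whether some canonical function realises it.

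\textbf{Step 1: enumerate the orbits.} Let $k$ be a common upper bound for the arities of the relations of $\bA$ and $\bB$, and assume $k\geq 2$ so that equality of entries is recorded. By homogeneity, the $\Aut(\bA)$-orbit of an $n$-tuple $a$ is determined by the isomorphism type of the substructure induced on $a$, together with the equalities among the entries of $a$. By finite boundedness, we can list all such types for $n\leq k$: enumerate all structures on at most $k$ points and discard those that embed a bound. The same applies to $\bB$.

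\textbf{Step 2: enumerate candidate behaviours.} A candidate behaviour is a map $\xi$ from orbits of $k$-tuples of $\bA$ to orbits of $k$-tuples of $\bB$. It must be compatible with permuting and identifying coordinates: if $a'$ is obtained from $a$ by a map $\{1,\dots,k\}\to\{1,\dots,k\}$, then $\xi$ applied to the orbit of $a'$ must be the orbit obtained from $\xi$ of the orbit of $a$ in the same way. There are finitely many candidates, and each can be listed.

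\textbf{Step 3: characterise realisability.} Given a candidate $\xi$, define a structure on the domain $\Omega$ of $\bA$ in the signature of $\bB$. A tuple $a$ satisfies a relation $R$ exactly when $\xi$ of the orbit of $a$ (padded to length $k$) lies in $R$. Declare $x\sim y$ when $\xi$ sends the orbit of $(x,y)$ to an orbit of pairs with equal entries. Compatibility checked on triples makes $\sim$ an equivalence relation, and the relations become well defined on the quotient. Call the resulting countable structure $\bC_\xi$ on $\Omega/_\sim$.

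I claim that $\xi$ is the behaviour of a canonical function if and only if every finite substructure of $\bC_\xi$ lies in the age of $\bB$.
\begin{itemize}
\item Necessity: a canonical $f$ with behaviour $\xi$ induces an embedding of $\bC_\xi$ into $\bB$.
\item Sufficiency: $\bB$ is homogeneous, hence universal for countable structures whose age is contained in its own. So $\bC_\xi$ embeds into $\bB$, and composing the quotient map with this embedding gives a function $f$ with behaviour $\xi$. This $f$ is canonical, because by homogeneity the orbit of any tuple in $\bB$ is determined by the orbits of its $k$-subtuples.
\end{itemize}

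\textbf{Step 4: reduce to a finite check.} This is the step I expect to need the most care. The condition in Step 3 quantifies over infinitely many finite substructures. Let $b$ be the maximal size of a bound of $\bB$. A finite substructure of $\bC_\xi$ lies outside the age of $\bB$ if and only if it embeds some bound. Any such copy is the image of at most $b$ points of $\bA$, and the $\xi$-structure on those points depends only on their $\bA$-type, because $\xi$ acts on orbits. Therefore it suffices to test, for every structure $\mathbb X$ in the age of $\bA$ with $|\mathbb X|\leq \max(b,k,3)$, whether the $\xi$-image of $\mathbb X$ is bound-free and satisfies the equality consistency from Step 3. There are finitely many such $\mathbb X$, and we can enumerate them as in Step 1.

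\textbf{Output.} The algorithm outputs exactly those candidates $\xi$ that pass this test. In particular, the number of possible behaviours is finite.
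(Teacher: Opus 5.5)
Your overall strategy is the standard one: pull the signature of $\bB$ back along $\xi$, quotient by the induced equivalence, and use universality of the homogeneous structure $\bB$ together with finite boundedness to turn realisability into a finite test. The paper does not reprove this statement; it imports it from Bodirsky, Pinsker and Tsankov.

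The gap is the claim in Step~3 that compatibility of $\xi$ under maps $\{1,\dots,k\}\to\{1,\dots,k\}$ makes the relations of $\bC_\xi$ well defined on $\Omega/_\sim$. It does not. To compare $R(a_1,a_2,\dots,a_r)$ with $R(a_1',a_2,\dots,a_r)$ for $a_1\sim a_1'$, you must look at the $r+1$ points $a_1,a_1',a_2,\dots,a_r$ together, and $\xi$ is only defined on $k$-orbits. Your Step~4 tests only structures of size at most $\max(b,k,3)$, so the algorithm can output non-realisable candidates whenever $b\leq k$ and $k\geq 3$.

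Here is a concrete counterexample.
\begin{itemize}
\item Let $\bA$ be the Fra\"{i}ss\'{e} limit of all finite structures carrying an equivalence relation $E$ and an arbitrary ternary relation $T$.
\item Let $\bB$ be the generic ternary structure $(\Omega,R)$, which has no bounds, and take $k=3$.
\item Let $\xi$ identify exactly the $E$-related pairs, and put $R$ on the image of a triple iff its entries are pairwise $E$-inequivalent and satisfy $T$.
\end{itemize}
One checks that $\xi$ is compatible under all maps $\{1,2,3\}\to\{1,2,3\}$. Every substructure of $\bA$ on at most three points has a well-defined (and trivially bound-free) image, so your test accepts $\xi$. Now take the four-point configuration $a\mathrel{E}a'$, $a\neq a'$, with $c,d$ in two further classes, $T(a,c,d)$ and $\neg T(a',c,d)$. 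Any realising $f$ would need $f(a)=f(a')$ together with $R(f(a),f(c),f(d))$ and $\neg R(f(a'),f(c),f(d))$, which is impossible.

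The repair is straightforward. Run the test on all $\mathbb X$ in the age of $\bA$ with $|\mathbb X|\leq\max(b,k+1)$, and include in it that the $\xi$-induced relations on $\mathbb X$ are $\sim$-invariant. Equivalently, require that some map $\mathbb X\to\bB$ realises $\xi$ on all $k$-tuples of $\mathbb X$. Exchanging the coordinates of a tuple one at a time shows that $(k+1)$-point configurations suffice for $\sim$-invariance on all of $\Omega$. After that, your necessity/sufficiency argument in Step~3 and your reduction of bound-freeness to $b$-point sets in Step~4 go through unchanged. Alternatively, define candidate behaviours on orbits of $n$-tuples with $n\geq\max(b,k+1)$ from the outset. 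Then compatibility under restriction maps already encodes all the local consistency conditions you need.
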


The importance of canonical functions lies in the fact that they can be computed between finitely bounded homogeneous structures; for \emph{Ramsey structures}, they are even common in a certain sense. Before making this precise we give, for the sake of completeness, the definition of a Ramsey structure. We would like to point out that in this paper we shall only make use of the forthcoming consequence of the Ramsey property. 
A countable structure $\bA$ is said to be \emph{Ramsey} if for all $k \geq 1$ and all finite substructures $\mathbb X,\mathbb Y \subseteq \bA$ there is a finite substructure $\mathbb Z \subseteq \bA$ such that for all maps
\begin{equation*}
    c \colon \{e\colon \mathbb X \to \mathbb Z \mid e \text{ is an embedding} \} \to \{1,\dots,k\}    
\end{equation*}
there is an embedding $f\colon \mathbb Y \to \mathbb Z$ such that $c$ is constant on the following set
\begin{equation*}
    \{ f \circ e \mid e\colon \mathbb X \to \mathbb Y \text{ is an embedding} \}.
\end{equation*}
 
\begin{theorem}[Bodirsky, Pinsker, and Tsankov~\cite{BPT-decidability-of-definability}]\label{thm:canonisation}
    Let $\bA$ be a Ramsey structure, let $\bB$ be $\omega$-categorical, and let $\Omega, \Omega'$ be their domains. Let $f\colon \Omega\to\Omega'$ be any function. Then the closure of the set $$\{\beta\circ f\circ \alpha \,\mid\, \alpha\in\Aut(\bA),\beta\in\Aut(\bB)\}$$ 
    in the space of all functions from $\Omega$ to $\Omega'$ (see Section~\ref{subsect:topo_pw}) contains an $(\bA,\bB)$-canonical function.
\end{theorem}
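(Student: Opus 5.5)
The plan is to go through extreme amenability: encode ``how a function behaves up to $\Aut(\bB)$'' as a point in a compact space, find a point fixed by $\Aut(\bA)$, and then realize that point by an honest function in the closure. Throughout I will use that $\bA$ may be taken homogeneous (as in~\cite{BPT-decidability-of-definability}), so that by the Kechris--Pestov--Todorcevic correspondence the Ramsey property yields that $\Aut(\bA)$ is extremely amenable. Equivalently, one could run the finite-colouring argument below directly with the Ramsey property.

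\textbf{Step 1: the type space.} For a function $g\colon\Omega\to\Omega'$ define its \emph{type} $\tau(g)$ as the map sending each finite tuple $a$ of $\Omega$ to the $\Aut(\bB)$-orbit of $g(a)$. Since $\bB$ is $\omega$-categorical, there are only finitely many orbits of $n$-tuples for each $n$. Hence $\tau(g)$ lives in the compact space $S=\prod_{a\in\Omega^{<\omega}} O_{|a|}$, where $O_n$ is the finite set of $\Aut(\bB)$-orbits of $n$-tuples.

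The group $\Aut(\bA)$ acts continuously on $S$ by $(\alpha\cdot p)(a)=p(\alpha^{-1}a)$, and $\tau(f\circ\alpha^{-1})=\alpha\cdot\tau(f)$. Let $K$ be the closure of the orbit $\{\tau(f\circ\alpha)\mid\alpha\in\Aut(\bA)\}$. Then $K$ is a nonempty compact $\Aut(\bA)$-flow.

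\textbf{Step 2: a fixed point.} Extreme amenability gives a point $p\in K$ fixed by $\Aut(\bA)$. Unwinding the definition, $p$ assigns the same $\Aut(\bB)$-orbit to any two tuples in the same $\Aut(\bA)$-orbit. So $p$ is a ``canonical behaviour''. If one prefers the purely combinatorial route, this is where one colours the embeddings of a finite $\mathbb X$ into $\mathbb Z$ by the $\Aut(\bB)$-orbit of the image under $f$. There are finitely many colours, and the Ramsey property makes $f\circ\alpha$ canonical on a given finite set, for a suitable $\alpha$.

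\textbf{Step 3: realizing $p$ (main obstacle).} It remains to produce $g$ in the closure of $\{\beta\circ f\circ\alpha\}$ with $\tau(g)=p$; such a $g$ is $(\bA,\bB)$-canonical. Enumerate $\Omega=\{a_1,a_2,\dots\}$ and put $\bar a_n=(a_1,\dots,a_n)$. Since $p\in K$, for each $n$ there is $\alpha_n$ such that the orbit of $f\alpha_n(\bar a_n)$ under $\Aut(\bB)$ is $p(\bar a_n)$.

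I will build $\beta_n\in\Aut(\bB)$ inductively so that the maps $g_n=\beta_n f\alpha_n$ agree with each other on $\bar a_n$. Since $p$ is consistent under restriction, $\beta_{n+1}f\alpha_{n+1}(\bar a_n)$ and $g_n(\bar a_n)$ lie in the same orbit $p(\bar a_n)$. We may therefore choose $\beta_{n+1}$ so that they coincide: apply an automorphism of $\bB$ mapping one tuple to the other.

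The $g_n$ then converge pointwise to some $g$ with $\tau(g)(\bar a_n)=p(\bar a_n)$ for all $n$. Every tuple is a subtuple, up to reordering and repetition, of some $\bar a_n$, so $\tau(g)=p$ everywhere. Moreover $g$ lies in the required closure by construction.

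The delicate point is exactly this step: it needs consistency of $p$ under restriction to subtuples and orbits of $\bB$ that are determined on tuples, which is where $\omega$-categoricity of $\bB$ (finitely many orbits, hence compactness of $S$) is used essentially.
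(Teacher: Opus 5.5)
Your argument is correct and is essentially the standard proof from \cite{BPT-decidability-of-definability}, which the paper only cites without proof: compactness of the space of $\Aut(\bB)$-types of maps (from $\omega$-categoricity of $\bB$), a fixed point in the orbit closure of the type of $f$ (from extreme amenability of $\Aut(\bA)$ via Kechris--Pestov--Todorcevic), and realisation of that fixed type as a pointwise limit of maps $\beta_n\circ f\circ\alpha_n$. Two small corrections: homogeneity of $\bA$ is a genuine implicit hypothesis of the statement, not something that ``may be taken'' (for instance, $(\mathbb Z,<)$ is Ramsey in the paper's sense, yet for $f(n)=n \bmod 2$ into $(\{0,1\};\{0\})$ the closure is $\{f,1-f\}$ and contains no canonical map), and the coherence of $p$ under restriction and reindexing that you use in Step 3 should be justified by noting that it is a clopen condition satisfied by every $\tau(g)$, hence by every point of $K$.
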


\subsection{Descriptive set theory}
    The following material can be found in any textbook on descriptive set theory, e.g.,~\cite{Kechris}.

    A Polish space is a topological space that is \emph{separable} (it contains a countable dense subset), and \emph{completely metrisable} (its topology is induced by a complete metric). 
    Recall that a complete metric is one where every Cauchy-sequence converges. One example of a Polish space is the space $\mathbb R$ of the real numbers with the topology induced by the Euclidean metric. Another example  is the space of all countably infinite 0-1-sequences: equipped with the topology of pointwise convergence from Section~\ref{subsect:topo_pw}, it is a Polish space, which is not homeomorphic to $\mathbb R$ as it is compact. However, from the point of view of their Borel structure, the two spaces are isomorphic: we shall introduce this structure next.

    Formally, a \emph{Borel space} is a pair $(X,\cB)$ where $X$ is a topological space and $\cB$ is the smallest \emph{$\sigma$-algebra}, i.e., the smallest family of subsets of $X$ that is closed under complements and countable intersections, that  contains all open subsets of $X$. Sets contained in $\cB$ are referred to as \emph{Borel sets}. For example, open sets are Borel by definition, closed sets are, and so are countable unions of closed sets. As the $\sigma$-algebra $\cB$ is completely determined by the topological space $X$ we simply refer to $X$ as a Borel space, without explicitly mentioning $\cB$; however, in this context we will be interested in \emph{Borel maps} rather than, say, continuous ones. 
    A map between two Borel spaces $f\colon X \to Y$ is \emph{Borel} if all preimages of Borel sets are Borel sets. The map $f$ is an \emph{isomorphism} of Borel spaces if it is invertible and its inverse is Borel again. Two Borel spaces are \emph{isomorphic} if there is an isomorphism from one onto the other.
    
    A \emph{standard Borel space} is a Borel space whose underlying topological space is a Polish space. Kuratowski's theorem states that the only standard Borel spaces up to isomorphism are 
    \begin{itemize}
        \item finite discrete spaces;
        \item the countable discrete space;
        \item the real numbers $\bR$.
    \end{itemize}
    This implies that, viewed as Borel spaces, $\bR$ and $\bR^n$ are isomorphic Borel spaces for all $n\geq 1$. This contrasts the perspective of topology, where $\bR^n$ and $\bR^m$ are isomorphic as topological spaces, i.e., homeomorphic, if and only if $n = m$. 
    This illustrates nicely why Borel isomorphism, in contrast to topological isomorphism (also called homeomorphism), is the right generalisation of computable isomorphism  when moving from countable to uncountable. In computability theory, the set $\mathbb N$ of natural numbers is the central object and the relevant morphisms are computable maps. These  make all finite powers of $\mathbb N$ isomorphic from the computability perspective: there is a computable bijection from ${\mathbb N}^n$ onto $\mathbb N$. For Borel-reducibility, $\mathbb R$ is the central object, and Borel-isomorphism makes all finite powers of it isomorphic.

    The class of all countable $\omega$-categorical structures can be encoded, for example, as 0-1-sequences, and forms an uncountable standard Borel space, see~\cite[II.16.C]{Kechris}, and also~\cite{NiesSchlichtTent}. By the above, this space is Borel-isomorphic to $\mathbb R$. Therefore,  equivalence relations on this class can be  compared with respect to Borel reductions, already formally defined in Section~\ref{sect_intro_descriptive}. Since, as the notion of tractability for classification problems on all $\omega$-categorical structures, it is central to this work, we repeat the notion of smoothness:
    
    \begin{definition}[Smoothness]
        An equivalence relation $E$ on a Borel space $X$ is \emph{smooth} if it Borel-reduces to equality on the space $\bR$.
    \end{definition}

\section{On the model-complete core}\label{sect:mccores}

\subsection{Characterization of bi-definitions}

In this section all structures have a fixed countable set $\Omega$ as their domain unless explicitly defined otherwise. 
We will show that if a structure is a first-order reduct of a finitely bounded homogeneous Ramsey structure, then we can compute its model-complete core along with a finitely bounded homogeneous Ramsey expansion of this model-complete core. 

\begin{lemma}\label{lem:criterion-bidefinability}
    Let $\bA,\bB$ be $\omega$-categorical homogeneous structures and let $\bC,\bD$ be first-order reducts in the same signature $\tau$. Then the following are equivalent:
    \begin{itemize}
        \item[(1)] $\bA$ and $\bB$ are fo-bi-definable by an isomorphism $\bC \to \bD$; 
        \item[(2)] There exist injective functions $f,g\colon \Omega\to \Omega$ such that
        \begin{itemize}
            \item[(i)] $f$ is $(\bA,\bB)$-canonical;
            \item[(ii)] $g$ is $(\bB,\bA)$-canonical;
            \item[(iii)] $g\circ f$ preserves all  orbits of $\bA$;
            \item[(iv)] $f\circ g$ preserves all  orbits of $\bB$. 
        \end{itemize}
    Moreover, for every relation $R\in \tau$ we require that
    \begin{itemize}
        \item[(v)] $f$ takes $\bA$-orbits in $R^\bC$ to $\bB$-orbits in $R^\bD$;
        \item[(vi)] $g$ takes $\bB$-orbits in $R^\bD$ to $\bA$-orbits in $R^\bC$.
    \end{itemize}
    \end{itemize}
\end{lemma}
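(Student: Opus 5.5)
The plan is to prove both directions, with (2)$\Rightarrow$(1) via a back-and-forth argument. For (1)$\Rightarrow$(2), let $\theta$ be an fo-bi-definition of $\bA$ and $\bB$ which is at the same time an isomorphism $\bC\to\bD$. By the characterization of fo-definable relations via automorphisms, the proof of Lemma~\ref{lem:biDefAreActionIsos} shows that $\theta$ conjugates $\Aut(\bA)$ onto $\Aut(\bB)$. Consequently $\theta$ maps $\Aut(\bA)$-orbits of $n$-tuples bijectively onto $\Aut(\bB)$-orbits of $n$-tuples. Setting $f:=\theta$ and $g:=\theta^{-1}$, both maps are injective and canonical. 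The compositions $g\circ f$ and $f\circ g$ are identities, so (iii) and (iv) hold trivially. Since $\theta$ is an isomorphism $\bC\to\bD$, we have $\theta(R^\bC)=R^\bD$, and $R^\bC$ is a union of $\bA$-orbits; this gives (v), and (vi) follows symmetrically.

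For (2)$\Rightarrow$(1), write $\xi_f$ for the behaviour of $f$ (a map from $\bA$-orbits to $\bB$-orbits) and $\xi_g$ for that of $g$. Conditions (iii) and (iv) say exactly that $\xi_g\circ\xi_f$ and $\xi_f\circ\xi_g$ are identities on orbits of every arity, so $\xi_f$ is a bijection between $\bA$-orbits and $\bB$-orbits of $n$-tuples for each $n$.

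We will build a bijection $\theta\colon\Omega\to\Omega$ whose behaviour is $\xi_f$, meaning that for every tuple $a$, the tuple $\theta(a)$ lies in the $\bB$-orbit $\xi_f(\text{orbit of }a)$. We use a back-and-forth construction, enumerating $\Omega$ and maintaining a finite partial map $p$ with this orbit property.

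The forth step goes as follows. Given a tuple $a$ in the domain of $p$ and a new element $c$, let $b=p(a)$. Then $f(a)$ and $b$ lie in the same $\bB$-orbit, so by homogeneity of $\bB$ there is $\beta\in\Aut(\bB)$ with $\beta f(a)=b$. We extend $p$ by $c\mapsto\beta f(c)$; the extended map has the orbit property because $f$ is canonical.

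The back step is the crux, since here the map $g$ is needed. Given $b=p(a)$ and a new element $d$, observe that $g(b)$ lies in the orbit $\xi_g\xi_f(\text{orbit of }a)$, which by (iii) is the orbit of $a$. Homogeneity of $\bA$ then gives $\alpha\in\Aut(\bA)$ with $\alpha g(b)=a$, and we set $p^{-1}(d):=\alpha g(d)$. The tuple $(a,\alpha g(d))$ lies in the orbit $\xi_g(\text{orbit of }(b,d))$. Applying $\xi_f$ and using (iv), its image orbit is the orbit of $(b,d)$, as required.

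Injectivity of $\theta$ is automatic, since behaviours preserve the equality type. The limit map $\theta$ maps each $\bA$-orbit onto the corresponding $\bB$-orbit. It follows that $\theta$ and $\theta^{-1}$ map unions of orbits to unions of orbits, i.e., fo-definable sets to fo-definable sets, using quantifier elimination from homogeneity. Hence $\theta$ is an fo-bi-definition.

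Finally, $R^\bC$ is a union of $\bA$-orbits. By (v), $\xi_f$ maps these orbits into $R^\bD$. By (vi), together with $\xi_f\xi_g=\mathrm{id}$, every $\bB$-orbit in $R^\bD$ is of the form $\xi_f(O)$ for some orbit $O\subseteq R^\bC$. Therefore $\theta(R^\bC)=R^\bD$, and $\theta$ is an isomorphism $\bC\to\bD$.

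The main obstacle is making the back step consistent, and this is exactly where (iii) and (iv) are used. Without the identity $\xi_g=\xi_f^{-1}$, $g$ would not invert $f$ orbitwise and the two halves of the construction would not glue together.
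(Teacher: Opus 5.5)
Your proof is correct, and for (2)$\Rightarrow$(1) it takes a different route from the paper. The paper does not build $\theta$ by hand. It restricts $\bB,\bD$ to $\Omega':=f(\Omega)$ and uses (iv) to see that $f\circ g$ embeds $\bB$ into the restriction $\bB'$, so the two have the same age. It then shows that the conjugate action $\Aut(\bA)^f$ on $\Omega'$ consists of automorphisms of $\bB'$ and is transitive on each $\bB$-orbit. Hence $\bB'$ is homogeneous, and $f$ is already an fo-bi-definition of $\bA$ and $\bB'$ and an isomorphism $\bC\to\bD'$. Finally, the uniqueness part of Fra\"{i}ss\'{e}'s theorem gives an isomorphism $\bB'\to\bB$, which is automatically an isomorphism $\bD'\to\bD$.

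You instead inline the back-and-forth behind Fra\"{i}ss\'{e}'s theorem: $f$ drives the forth step, $g$ together with (iii) and (iv) drives the back step, and injectivity of $f,g$ guarantees that new points stay new. This avoids checking homogeneity and ages for an auxiliary structure. Your construction in fact never uses homogeneity. The automorphisms $\beta$ and $\alpha$ exist simply because the tuples in question lie in the same orbit, so appealing to homogeneity there is unnecessary. Likewise, the identification of fo-definable sets with unions of orbits comes from $\omega$-categoricity (Engeler--Ryll-Nardzewski--Svenonius), not from quantifier elimination. So your argument proves the lemma for arbitrary $\omega$-categorical $\bA,\bB$. This is a modest gain, since one can always pass to the homogeneous expansion by all orbits, but it makes your proof self-contained.

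The paper's route, in exchange, is shorter by outsourcing the back-and-forth. It also yields extra information: $f$ itself is an fo-bi-definition onto its image, with $\Aut(\bA)^f=\Aut(\bB')$.

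For (1)$\Rightarrow$(2) the two proofs are essentially the same. The paper phrases it via inclusion-minimal definable sets, whereas you use conjugation of the automorphism groups.
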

\begin{proof}
    (2) implies (1): Set $\Omega':=f(\Omega)$.
    Let $\bB',\bD'$ be the restrictions of $\bB,\bD$ to $\Omega'$. Since $f \circ g$ 
    preserves the orbits of $\bB$ by~(iv), it is an embedding of $\bB$ into $\bB'$, and in particular the age of $\bB$ is contained in that of $\bB'$. The converse inclusion being trivial, we conclude that $\bB$ and $\bB'$ have the same age.   

    We now prove that $\bB'$ is homogeneous. To see this, note that  $G := \Aut(\bA)$ acts on $\Omega'$ by its conjugate action $G^f$. Observe that this action preserves $\bB$-orbits: by~(i) and~(ii) $f$ and $g$ induce functions between $\bA$-orbits and $\bB$-orbits, respectively, and~(iii) and~(iv) imply that these functions are bijections. Since any  $\alpha\in G$ does not move $\bA$-orbits, we see that $\alpha^f(a)$ belongs to the $\bB$-orbit of $a$ for any tuple $a$ in $\Omega'$. Thus, the action $G^f\curvearrowright \Omega'$ is an action by automorphisms of $\bB'$, i.e., $G^f\leq \Aut(\bB')$. 
    We next observe that this action is transitive on every $\bB$-orbit (restricted to $\Omega'$). Indeed, for arbitrary tuples $a,b$ of $\bB'$ in the same $\bB$-orbit, we have that $f^{-1}(a)$ and $f^{-1}(b)$ belong to the same $\bA$-orbit since $f$ is a bijection between $\bA$-orbits and $\bB$-orbits. Thus, there is $\alpha \in G$ sending $f^{-1}(a)$ to $f^{-1}(b)$; then $\alpha^f(a)=b$. Putting this together, if two tuples $a,b$ in $\bB'$ are isomorphic, then they belong to the same $\bB$-orbit, and hence there exists $\alpha^f\in G^f\leq \Aut(\bB')$ with $\alpha^f(a)=b$. This shows  homogeneity of $\bB'$, and actually also $G^f=\Aut(\bB')$ since $G^f$ is closed and dense in $\Aut(\bB')$. 
    
    In particular, this shows, using Lemma~\ref{lem:biDefAreActionIsos}, that $\bA$ and $\bB'$ are fo-bi-definable via $f$. Moreover, item~(v) and item~(vi) show that $f$ is an isomorphism $\bC \to \bD'$.
    By Fra\"{i}ss\'{e}'s theorem (see, e.g.~\cite{Hodges}), $\bB',\bB$ are isomorphic and any such isomorphism is an isomorphism $\bD' \to \bD$. Therefore, $\bA$ and $\bB$ are fo-bi-definable by an isomorphism $\bC \to \bD$.

    (1) implies (2): Let $f\colon\bC\to\bD$ be an isomorphism that is a fo-bi-definition of $\bA,\bB$. Note that $f$ takes $\bA$-definable sets that are minimal (with respect to inclusion) to $\bB$-definable sets that are minimal (with respect to inclusion). Thus it takes $\bA$-orbits to $\bB$-orbits, i.e., it is $(\bA,\bB)$-canonical. The same argument shows that $f^{-1}$ is $(\bB,\bA)$-canonical. Taking $g$ to be $f^{-1}$ it immediately follows that all properties of item~(2) are met.
\end{proof}

\subsection{Computing the model-complete core}

\begin{definition}
    Let $\bC$ be a first-order reduct of an $\omega$-categorical homogeneous structure $\bA$. We say that $\bC$ is \emph{optimally presented over $\bA$} if the range of any  endomorphism of $\bC$ intersects all orbits of $\bA$.
\end{definition}

The following lemma might sound surprising, but perhaps less so if one considers the fact that it does hold for finite structures replacing orbits by elements.

\begin{lemma}\label{lem:optimalpresentation}
    Let $\bC,\bD$ be structures in the same signature, optimally presented over $\omega$-categorical homogeneous Ramsey structures $\bA,\bB$ in a  finite language. Assume that $\bC,\bD$ are homomorphically equivalent. Then $\bA$ and $\bB$ are fo-bi-definable by an isomorphism $\bC \to \bD$.
\end{lemma}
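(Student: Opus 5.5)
The approach is to verify condition~(2) of Lemma~\ref{lem:criterion-bidefinability} and then invoke that lemma. Since $\bC,\bD$ are homomorphically equivalent, we fix homomorphisms $h\colon\bC\to\bD$ and $k\colon\bD\to\bC$. Before canonising them we would like them to be injective. One option is to assume that equality is among the relations (or is pp-definable). The cleaner option is to derive injectivity from optimality: $\bA$ is homogeneous, so $x\neq y$ is an $\bA$-definable binary relation, and hence some orbit of pairs is contained in it. We apply Theorem~\ref{thm:canonisation} to $h$, obtaining an $(\bA,\bB)$-canonical $f$ in the closure of $\{\beta h\alpha\}$; likewise $k$ yields a $(\bB,\bA)$-canonical $g$. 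Since homomorphisms $\bC\to\bD$ form a closed set invariant under composition with automorphisms of $\bA$ and $\bB$ (which preserve the reducts), $f$ and $g$ are again homomorphisms.

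Next we use optimality. The map $g\circ f$ is an endomorphism of $\bC$, so by optimal presentation its range meets every $\bA$-orbit, in particular every orbit of $n$-tuples for every $n$. (Optimality is stated for orbits on the domain, but we read it, as the definition intends, for orbits of tuples of all arities; it is also needed that way below.) The map $g\circ f$ is $(\bA,\bA)$-canonical, so it induces a map $\xi$ on the finite set of $\bA$-orbits of $n$-tuples. Surjectivity of the range onto orbits means $\xi$ is surjective, hence a permutation of a finite set. Iterating, some power $(g f)^m$ induces the identity on orbits of $k$-tuples, where $k$ bounds the arities of the finite language. By homogeneity, $(gf)^m$ then preserves all $\bA$-orbits, and in particular it is injective. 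The same argument applies to $f g$ on $\bB$.

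We then replace $g$ by $g' := (gf)^{m-1} g$, choosing a common exponent $m$ (for instance the lcm) that works for both sides. Then $g'f=(gf)^m$ preserves $\bA$-orbits. Moreover, $fg' = f(gf)^{m-1}g=(fg)^m$ preserves $\bB$-orbits. Both $f$ and $g'$ are canonical, as compositions of canonical maps, and both are injective, because $g'f$ and $fg'$ are. This gives items (i)--(iv).

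For (v), let $O$ be an $\bA$-orbit inside $R^\bC$. Then $f(O)$ lies in a single $\bB$-orbit by canonicity, and that orbit lies in $R^\bD$ because $f$ is a homomorphism and $R^\bD$ is a union of $\bB$-orbits. Item (vi) follows the same way for $g'$. Lemma~\ref{lem:criterion-bidefinability} then yields the claim. I expect the main obstacle to be the finiteness and permutation step: one must check that optimality gives surjectivity on orbits of tuples, not just of elements, and that permutations on orbits of $k$-tuples propagate to all arities via homogeneity. Care is also needed with the "up to automorphism" subtlety, namely that preserving each orbit of $k$-tuples suffices for an orbit-preserving map in the finite-language homogeneous setting.
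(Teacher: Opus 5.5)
Your proposal is correct and follows essentially the paper's proof: canonise the two homomorphisms via Theorem~\ref{thm:canonisation}, use optimal presentation to see that $g\circ f$ and $f\circ g$ permute the finitely many orbits of $k$-tuples, pass to a power that acts as the identity, and apply Lemma~\ref{lem:criterion-bidefinability}. The differences are cosmetic. You modify $g$ instead of $f$, and you spell out injectivity and items (v)--(vi), which the paper leaves implicit; your opening remark about $x\neq y$ is unnecessary, since injectivity already follows from (iii) and (iv).
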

\begin{proof}
    Let $f,g$ be homomorphisms from $\bC$ into $\bD$ and from $\bD$ into $\bC$, respectively. 
    As they are homogeneous in a finite language, the structures $\bA,\bB$ are $\omega$-categorical, and since they are additionally Ramsey we can assume that $f$ and $g$ are canonical with respect to $(\bA,\bB)$ and  $(\bB,\bA)$, respectively. Namely, by Theorem~\ref{thm:canonisation}, there is an $(\bA,\bB)$-canonical function $f'$ in the topological closure of $\{ \beta \circ f \circ \alpha \mid \alpha \in \Aut(\bA), \beta \in \Aut(\bB) \}$, which is easily seen to be a homomorphism $\bC \to \bD$, so we may replace $f$ by $f'$; similarly $g$ can be replaced by a$(\bB,\bA)$-canonical homomorphism $\bD \to \bC$. 
    Since $g\circ f$ and $f\circ g$ are endomorphisms of $\bC,\bD$, by their optimal presentation we conclude that $f$ and $g$ induce bijections between the $\bA$-orbits and the $\bB$-orbits respectively. Namely, for every $k\geq 1$, $g\circ f$ and $f \circ g$ induce surjective maps on the finite sets of $k$-orbits in $\bA$ and $\bB$.
    Moreover we can assume that $f\circ g$ acts like the identity on $\bB$-orbits (which also implies that  $g\circ f$ acts like the identity on $\bA$-orbits) as the following argument shows. The structure $\bB$ is homogeneous in a finite language so, choosing $k\geq 1$ to be larger than the arity of each of its relations, the behaviour of a $(\bB,\bB)$-canonical function $\Omega \to \Omega$ on orbits of $\bB$ is uniquely determined by its action on $k$-orbits.
    Thus, there is some $n\geq 1$ such that $(fg)^n$ acts like the identity on $k$-orbits of $\bB$, and replacing $f$ by $(f\circ g)^{n-1}\circ f$ we obtain the desired property.
    
    Now observe that $\bA,\bB$ together with their reducts $\bC,\bD$ and the functions $f,g$ satisfy all properties of item~(2) in Lemma~\ref{lem:criterion-bidefinability}. Hence, $\bA$ and $\bB$ are fo-bi-definable by an isomorphism $\bC\to\bD$.
\end{proof}

\begin{definition}
    Let $\bA$ be an $\omega$-categorical homogeneous structure and let $f \colon \Omega \to \Omega$ be an $(\bA,\bA)$-canonical function. It is \emph{range-rigid} if its behaviour is an idempotent function on orbits, i.e., if composing the behaviour of $f$ with itself yields a function that is equal to the behaviour of $f$.
\end{definition}

The following is a strengthened version of Theorem~\ref{thm:mccore}, which was announced in Section~\ref{sect:contributions}.

\begin{theorem}\label{thm:computeop}
    Let $\bA$ be a finitely bounded  homogeneous Ramsey structure and $\bC$ be a first-order reduct in a finite language. We can compute  a finitely bounded homogeneous Ramsey structure $\bA'$ and a first-order reduct $\bC'$ such that $\bC'$ is optimally presented over $\bA'$ and the model-complete core of $\bC$.
\end{theorem}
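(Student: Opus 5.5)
We plan an iterative shrinking procedure, mirroring the proof of Theorem~\ref{thm:MottetPinskerCores} but replacing the compactness argument by a finite search over canonical behaviours. Start with $\bA_0:=\bA$ and $\bC_0:=\bC$. At stage $i$, use Theorem~\ref{thm:computingBehaviour} to list all behaviours of $(\bA_i,\bA_i)$-canonical functions. For each such behaviour we can decide whether a function with this behaviour is an endomorphism of $\bC_i$: the relations of $\bC_i$ are unions of $\bA_i$-orbits given by quantifier-free formulas, so we only check that the behaviour maps every orbit contained in $R^{\bC_i}$ to an orbit contained in $R^{\bC_i}$. We then decide whether some canonical endomorphism misses at least one $\bA_i$-orbit. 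Because of Theorem~\ref{thm:canonisation}, whenever \emph{any} endomorphism $e$ of $\bC_i$ misses an orbit $O$ of $\bA_i$, the closure of $\{\beta e\alpha\}$ contains a canonical endomorphism, and it still misses $O$, since $O$ is $\Aut(\bA_i)$-invariant and missing it is a closed condition. Consequently, if no canonical behaviour misses an orbit, then $\bC_i$ is optimally presented over $\bA_i$ and we stop.

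If some canonical endomorphism $e$ misses an orbit, iterate its behaviour to obtain a range-rigid canonical endomorphism $e'=e^n$ with idempotent behaviour, which still misses that orbit. Let $\bA_{i+1}$ be the substructure of $\bA_i$ induced on the image of $e'$, and let $\bC_{i+1}$ be defined by the same quantifier-free formulas. The key steps to verify are these.
\begin{itemize}
\item $\bA_{i+1}$ is homogeneous. We argue as in the proof of Lemma~\ref{lem:criterion-bidefinability}: the conjugate action of $\Aut(\bA_i)$ through $e'$ acts on the image, and by idempotence of the behaviour it is transitive on each orbit that is realized there.
\item $\bA_{i+1}$ is finitely bounded. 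Its age consists of the structures in the age of $\bA_i$ all of whose $k$-element substructures (with $k$ the maximal arity) lie in orbits fixed by the behaviour. This gives computable finite bounds: the old bounds plus the forbidden $k$-orbits.
\item $\bA_{i+1}$ is Ramsey, because it is isomorphic to a structure whose age is a subclass of the age of $\bA_i$ closed under the relevant embeddings. Here we rely on the Ramsey transfer from Theorem~\ref{thm:MottetPinskerCores}.
\item $\bC_{i+1}$ is homomorphically equivalent to $\bC_i$, via $e'$ and the inclusion map.
\end{itemize}

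The procedure terminates: at each stage the number of $\bA_i$-orbits of $k$-tuples strictly decreases. At termination $\bC'$ is optimally presented over $\bA'$. It remains to show that $\bC'$ is a model-complete core. By Lemma~\ref{lem:optimalpresentation}, applied to $\bC'$ and the core from Theorem~\ref{thm:MottetPinskerCores} (which we may also shrink until it is optimally presented), the two are isomorphic. Alternatively, we argue directly: any endomorphism $e$ of $\bC'$ can be canonised and iterated to an endomorphism whose behaviour is the identity on orbits. Then every finite restriction of $e$ preserves $\bA'$-orbits and hence extends to an automorphism of $\bA'$, which is an automorphism of $\bC'$. In carrying out this argument we must make sure that the canonisation stays in the closure of $\{\beta e\alpha\}$, so that finite restrictions of $e$ itself are controlled. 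For this we use that $e$ already maps orbits surjectively onto orbits by optimality, combined with $\omega$-categoricity.

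The main obstacle is the third step: showing that the image of a range-rigid canonical function carries a homogeneous Ramsey structure, and doing so effectively. We would first try to prove that the image is exactly the set of tuples all of whose $k$-orbits are fixed by the idempotent behaviour. From this description, finite boundedness and homogeneity follow, and the Ramsey property passes to this definable-by-orbits subclass.
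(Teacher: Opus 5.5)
The iterative scheme itself is a legitimate alternative to the paper's one-shot selection of a range-rigid canonical endomorphism with inclusion-minimal range. You shrink until no canonical endomorphism misses an orbit, and termination holds because the number of $k$-orbits drops. The canonisation argument for optimality at termination is also fine.

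\textbf{The main gap is your homogeneity bullet.} The substructure induced on the image of a range-rigid canonical function need not be homogeneous, so your $\bA_{i+1}$ is not a valid input for the next round, which needs a finitely bounded homogeneous Ramsey structure. Here is a counterexample.
\begin{itemize}
\item Let $\bA=(\Omega;<,P,Q)$ be the dense linear order with a dense--codense partition into colours $P,Q$; it is finitely bounded, homogeneous and Ramsey. Let $\bC=(\Omega;P)$.
\item Fix $c\in P$. Let $f$ map $P$ order-isomorphically onto $P\cap(-\infty,c)$ and send all of $Q$ to $c$.
\item Then $f$ is a canonical, range-rigid endomorphism of $\bC$ missing the orbit $Q$. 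Its image, however, is a dense order with greatest element $c$, which is not homogeneous.
\end{itemize}
Your transitivity argument breaks in two ways. First, $f$ need not be injective, so there is no conjugate action. Second, even for injective $f$, an idempotent behaviour can merge several source orbits into one target orbit, and $\Aut(\bA_i)$ acting through $f$ never connects images of different source orbits.

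Your hoped-for description in the last paragraph is also false: the image need not be the set of tuples whose $k$-orbits are fixed. For instance, an order embedding of $(\bQ,<)$ onto $(0,1)$ has identity behaviour. What is true is the age-level statement you already use for finite boundedness: the age of the image consists of the members of the age of $\bA_i$ all of whose $\le k$-element substructures lie in fixed orbits, since $e'$ is an isomorphism on such structures.

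So $\bA_{i+1}$ must be the Fra\"{i}ss\'{e} limit of this class, and three things then need checking.
\begin{itemize}
\item Amalgamation: amalgamate in the age of $\bA_i$ and apply $e'$.
\item Ramsey: pull colourings back along $e'$; being a subclass alone gives nothing.
\item Homomorphic equivalence: use $e'$ followed by an embedding of the image into the limit, and in the other direction an embedding of the limit into $\bA_i$ by universality.
\end{itemize}
This is what the paper imports from~\cite{MottetPinskerCores}.

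\textbf{The final step also needs repair.} Route (A) assumes that shrinking the core from Theorem~\ref{thm:MottetPinskerCores} until it is optimally presented again gives a model-complete core. This is true but unargued; the paper gets the corresponding fact from the minimal-left-ideal construction.

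Route (B) fails as written. Canonising $e$ passes to a limit of maps $\beta e\alpha$ and gives no control of $e$ on a fixed tuple. Moreover, ``$e$ maps orbits surjectively onto orbits'' is meaningless for non-canonical $e$: optimality only says its range meets every orbit.

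A correct direct argument is the one in Lemma~\ref{lem:coresZF}.
\begin{enumerate}
\item Suppose some $g\in\End(\bC')$ and tuple $t$ admit no $h\in\End(\bC')$ with $hg(t)=t$.
\item A compactness construction yields $g'$ for which this fails for every tuple in the $\Aut(\bA')$-orbit $O$ of $t$.
\item Take a canonisation of $g'$ and then a power with idempotent behaviour. This power cannot fix $O$, so its range misses $O$, contradicting optimality.
\item The resulting property implies, by induction on formulas, that every endomorphism of $\bC'$ is elementary. Hence $\bC'$ is a model-complete core by $\omega$-categoricity, without needing Theorem~\ref{thm:MottetPinskerCores}.
\end{enumerate}
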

\begin{proof}
    We start by computing all $(\bA,\bA)$-canonical endomorphisms $f$ of $\bC$ that are range-rigid and hit an inclusion-minimal set of $\bA$-orbits. Observe that this is possible. 
    Namely, by Theorem~\ref{thm:computingBehaviour}, we can compute the finite set of behaviours of $(\bA,\bA)$-canonical functions, and for each behaviour we can then check if it preserves the relations of $\bC$, which are just unions of $\bA$-orbits, and if it is range-rigid and additionally its range intersects an inclusion minimal set of $\bA$-orbits.
    By Lemma~9 of~\cite{MottetPinskerCores}, the age of the range (viewed as a substructure of $\bA$) of any such $f$ is finitely bounded by structures of size which are bounded by the maximum of arities of relations of $\bA$ and its bounds. Moreover, it is a Fra\"{i}ss\'{e}-class, whose Fra\"{i}ss\'{e}-limit $\bA'$ has the Ramsey property. 
    For one of these structures $\bA'$, the reduct $\bC'$ that is defined by the same quantifier-free formulas that define $\bC$ in $\bA$ is the model-complete core of $\bC$.
    In fact, this is the case whenever $f$ belongs to a minimal closed left-ideal of $\End(\bC)$, a property which a priori seems non-trivial to check.

    To circumvent this, we now observe that any such $\bC'$ is optimally presented over $\bA'$: suppose not, and let $g\in \End(\bC')$ be a function to witness this. Applying Theorem~\ref{thm:canonisation} we may additionally assume that $g$ is $(\bA',\bA')$-canonical. 
    The age of $\bA'$ is induced by the range of some $f\in\End(\bC)$, and we may assume that this range is contained in $\bA'$ by universality of the latter. But then $g \circ f$ contradicts minimality of the set of $\bA$-orbits contained in the image of $f$.
    
    Note that any two structures $\bC'$ are homomorphically equivalent, as they are homomorphically equivalent to $\bC$. Now by  Lemma~\ref{lem:optimalpresentation}, any two of the Fra\"{i}ss\'{e}-limits $\bA'$ are fo-bi-definable, and the corresponding reducts isomorphic via a fo-bi-definition. This implies the statement we want to show: we then know that for any $\bA'$ obtained, the reduct $\bC'$ must be the model-complete core of $\bC$, since we know this is the case for one of them.
\end{proof}
 
\begin{remark}
The above computation of the model-complete core can be carried out in $2\mathrm{NEXPTIME}$ with a single $\mathrm{coNP}$ query.
Suppose we are given a finitely bounded homogeneous Ramsey structure $\mathbb{A}$ specified by its relations and its bounds, and a first-order reduct $\mathbb{C}$ specified by quantifier-free formulas over $\mathbb{A}$. Let $n\geq 1$ be larger than the size of the bounds of $\mathbb{A}$
as well as the arities of the relations of $\mathbb{A}$ and of $\mathbb{C}$. We compute in $2\mathrm{EXPTIME}$ the doubly exponentially large set $O$ of
$k$-orbits of $\mathbb{A}$ for $k\leq n$, together with the restriction maps between them and the representation of the relations of $\mathbb{C}$ as unions of
orbits. We then wish to obtain a function $f\colon O\to O$ such that
\begin{enumerate}
    \item it is compatible with restrictions to sub-orbits;
    \item it is range-rigid, i.e., it satisfies $f(f(x))=f(x)$ for all $x\in O$;
    \item it preserves the relations of $\mathbb{C}$;
    \item it has inclusion-minimal range among the functions satisfying (1)--(3).
\end{enumerate}
Conditions (1)--(3) can be checked in time polynomial in $|O|$ but for condition (4) we require the $\mathrm{coNP}$-oracle. Namely, consider the language
\[
    L:=\bigl\{(O,\mathbb{C},f) \mid \text{there is } g\colon O\to O
    \text{ satisfying (1)--(3) with }
    \operatorname{range}(g)\subsetneq\operatorname{range}(f)\bigr\},
\]
where the triples are encoded as above. 
Since, with respect to the size of the input of $L$, the witness $g$ is of polynomial size and is verifiable in polynomial time, $L$ is in $\mathrm{NP}$. 
Note that $(O,\mathbb{C},f)\notin L$ precisely says that $f$ satisfies (4), so we obtain a desired function as follows.
In $2\mathrm{NEXPTIME}$ we find $f\colon O\to O$ satisfying (1)--(3) and then query the $\mathrm{coNP}$-oracle to check that it meets (4).
 
Such a function $f$ is induced by an $(\mathbb{A},\mathbb{A})$-canonical endomorphism $\tilde{f}$ of $\mathbb{C}$ that is range-rigid and has an inclusion-minimal set of $\mathbb{A}$-orbits as its range. The proof of Theorem~\ref{thm:computeop} establishes that the range of $\tilde{f}$ specifies the age of a finitely bounded homogeneous Ramsey structure $\mathbb{A}'$ whose first-order reduct $\mathbb{C}'$ is the model-complete core of $\mathbb{C}$. 
The set of bounds of $\mathbb{A}'$ equals the set of bounds of $\mathbb{A}$ together with $O\setminus f(O)$, and the formulas that specify the relations of $\mathbb{C}'$ are the same formulas that specify the relations of $\mathbb{C}$.
\end{remark}

The following Lemma is not necessary but it shows that in this setting the existence of the model-complete core is provable without the axiom of choice.

\begin{lemma}\label{lem:coresZF}
    Let $\bC$ be a first-order reduct of a finitely bounded  homogeneous Ramsey structure $\bA$. Then $\bC$ has a model-complete core. 
\end{lemma}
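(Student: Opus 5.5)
Following the proof of Theorem~\ref{thm:computeop}, the plan is to show directly that the structure $\bC'$ produced there is a model-complete core, without appealing to the compactness theorem or to minimal closed left ideals. The choices made along the way range over finite sets, so no choice principle beyond finite choice is needed. First, by Theorem~\ref{thm:computingBehaviour}, the set of behaviours of $(\bA,\bA)$-canonical functions is finite. Among those behaviours that preserve the relations of $\bC$ and are range-rigid (the identity behaviour is one of them), pick one whose range meets an inclusion-minimal set of $\bA$-orbits. Let $f$ be a canonical endomorphism of $\bC$ realising it. By Lemma~9 of~\cite{MottetPinskerCores}, the age of the range of $f$ is a finitely bounded Ramsey Fra\"{i}ss\'{e} class; let $\bA'$ be its limit, and let $\bC'$ be defined in $\bA'$ by the same formulas as $\bC$ in $\bA$. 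Since $\bA'$ is a countable homogeneous structure, we may take it to extend the range of $f$ (realise it as a Fra\"{i}ss\'{e} limit built concretely by the usual countable construction). Then $f$ is a homomorphism $\bC\to\bC'$. Conversely, $\bA'$ embeds into $\bA$ because its age is contained in that of $\bA$ and $\bA$ is homogeneous, hence universal for its age via a back-and-forth construction that uses no choice, since the domain is countable. This embedding is a homomorphism $\bC'\to\bC$. Hence $\bC$ and $\bC'$ are homomorphically equivalent.

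Second, as in the proof of Theorem~\ref{thm:computeop}, $\bC'$ is optimally presented over $\bA'$: an endomorphism $g$ of $\bC'$ missing some $\bA'$-orbit can be made canonical by Theorem~\ref{thm:canonisation}, and then $g\circ f$ (suitably iterated to become range-rigid) contradicts the minimality of the range of $f$. I should check whether Theorem~\ref{thm:canonisation} needs choice. Its proof is a compactness argument in a space of functions between countable sets, which can be carried out by a K\"onig-type argument on finitely branching trees over a fixed enumeration, so it is fine in ZF.

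Third, and this is the main step, I would show that optimal presentation implies that $\bC'$ is a model-complete core. Let $e\in\End(\bC')$ and a finite set $S$ be given. I want to show that $e|_S$ is a partial isomorphism of $\bA'$, and hence by homogeneity extends to an automorphism of $\bA'$, which is an automorphism of $\bC'$. Apply Theorem~\ref{thm:canonisation} to $e$ to obtain a canonical $e'$ in the closure of $\{\beta e\alpha\}$. By optimal presentation, the behaviour of $e'$ is surjective on the finite set of $k$-orbits for each $k$, and therefore bijective. Since it is surjective on orbits and compatible with restrictions, the induced map on orbits preserves orbit types, and so $e'$ preserves all $\bA'$-orbits after composing with a power, as in Lemma~\ref{lem:optimalpresentation}. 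This handles the canonical function, but the given $e$ is not itself canonical, and bridging this gap is the obstacle I expect. My first attempt would argue by contradiction. Suppose $e$ maps some tuple $a$ of $S$ into an orbit different from that of $a$. Then every function in the closure of $\{\beta e\alpha\}$ that agrees with $e$ on $\alpha^{-1}$-images of $a$ does the same. The task is to choose the canonisation so that it respects this: the Ramsey argument for Theorem~\ref{thm:canonisation} can be run relative to the stabiliser of $a$ (fixing $a$ as constants, which preserves the Ramsey property for the expansion by the orbit of $a$). This yields an endomorphism of $\bC'$ that is canonical yet moves the orbit of $a$. Composing enough powers then gives an endomorphism whose behaviour is not injective on some finite set of orbits, contradicting the bijectivity forced by optimal presentation.

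If this relative canonisation proves awkward, the fallback is to use the orbit counts directly. For $\omega$-categorical $\bA'$, an endomorphism $e$ of $\bC'$ maps each $\bA'$-orbit of $k$-tuples into the union of the orbits. Let $O_k$ denote the finite set of $\bA'$-orbits of $k$-tuples, and consider the relation on $O_k$ recording which orbits $e$ maps into which orbits. Optimal presentation of the closure applied to the monoid generated by $e$ should force this relation to be a permutation, which again yields that $e$ preserves orbits on $S$ and finishes the proof.
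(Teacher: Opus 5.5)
Your Steps 1 and 2 are sound and match the paper: the same choice of a canonical range-rigid $f$ with inclusion-minimal range, the Fra\"{i}ss\'{e} limit $\bA'$, homomorphic equivalence, and optimal presentation as in the proof of Theorem~\ref{thm:computeop}. The gap is in Step 3, and it is not a technical one: the statement you aim for there is false. You try to show that every endomorphism $e$ of $\bC'$ is, on finite sets, a partial isomorphism of $\bA'$, i.e.\ that $\End(\bC')\subseteq\overline{\Aut(\bA')}$. A model-complete core only requires $\End(\bC')\subseteq\overline{\Aut(\bC')}$, and $\Aut(\bC')$ is in general much larger than $\Aut(\bA')$.

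For a counterexample, take $\bA=(\bQ;<)$ and $\bC=(\bQ;\neq)$. Every canonical endomorphism of $\bC$ is injective and so meets all $\bA$-orbits, so $f=\mathrm{id}$ is a valid choice, giving $\bA'=\bA$ and $\bC'=\bC$. This $\bC'$ is optimally presented and is a model-complete core. Yet its automorphism $x\mapsto -x$ sends a pair $a_1<a_2$ into a different $\bA'$-orbit. Hence no contradiction can come out of your relative canonisation: $x\mapsto -x$ is already $(\bA',\bA')$-canonical with bijective behaviour. Likewise, the relation on $O_k$ in your fallback is not even a function for a non-monotone injection of $\bQ$. Separately, a function that is canonical only for the expansion of $\bA'$ by the constants $a$ has no behaviour on $\bA'$-orbits, so the bijectivity supplied by optimal presentation would not apply to it anyway.

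The missing idea has two parts. First, measure endomorphisms against types of $\bC'$ rather than orbits of $\bA'$. Second, spread a defect from a single tuple to its whole $\bA'$-orbit before canonising. The paper uses that an $\omega$-categorical structure is a model-complete core as soon as, for every $g\in\End(\bC')$ and every tuple $t$, there is $h\in\End(\bC')$ with $h(g(t))=t$. Equivalently, no endomorphism strictly enlarges the existential positive type of a tuple.

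Suppose $g$ and $t$ violate this. Enumerate the $\bA'$-orbit $O$ of $t$ and build inductively endomorphisms that violate it at the first $n$ tuples of $O$. Post-composing with endomorphisms can only enlarge existential positive types, so earlier defects persist. A limit taken along automorphism translates then gives $g'$ violating the condition on all of $O$. This property is invariant under composing with automorphisms of $\bA'$ on either side, so it survives canonisation and iteration. The result is a canonical range-rigid $g''$ that cannot act as the identity on orbits, hence misses some $\bA'$-orbit. Then $g''\circ f$ contradicts the minimality of $f$. Equivalently, using your Step 2: the behaviour of $g''$ is a bijection, and existential positive types would strictly increase around one of its cycles, which is impossible.
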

\begin{proof}
    There exists an $(\bA,\bA)$-canonical endomorphism $f$ of $\bC$ which is range-rigid with respect to $\Aut(\bA)$ and which hits a set of $\bA$-orbits which is minimal with respect to inclusion among sets of $\bA$-orbits that are obtained from such functions; such a function can even be computed by finite boundedness, see Theorem~\ref{thm:computingBehaviour}. Let $\bA'$ be the Fra\"{i}ss\'{e}-limit of the age of 
    the structure induced by the range of $f$ in $\bA$; it exists by Lemma~6 of~\cite{MottetPinskerCores}. 
    
    We claim that the corresponding fo-reduct $\bC'$ has the property that for any endomorphism $g$ of $\bC'$ and any tuple $t$ in $\bC'$ there exists an endomorphism $h$ such that $h \circ g(t)=t$. Indeed, suppose otherwise. Then we can inductively (and by using topological closure) build an endomorphism $g'$ that has the same property for all tuples $t'$ in the $\bA'$-orbit of $t$. 
    Hence, there exists also an $(\bA',\bA')$-canonical range-rigid  such endomorphism $g''$. But then the endomorphism $g'' \circ f$ contradicts the minimality of the range of $f$.
       
    Hence, $\bC'$ is a model-complete core, and clearly it is homomorphically equivalent to $\bC$.
\end{proof}

We note that in the proof of Lemma~\ref{lem:coresZF}, we do not need range-rigidity, but we do to have a clean citation of~\cite{MottetPinskerCores}.

\section{On bi-definability}\label{sect:BiDefinability}

Again, we assume all structures to have a fixed countable set $\Omega$ as their domain unless explicitly defined otherwise.

We will show that the relation of being bi-definable is simple in two interpretations of the word. First, we will show that it is simple in the computational sense, i.e., that it is decidable if one restricts to a subclass of structures where asking such question makes sense, namely when the input structures can be presented in a finite way. Second, we will show that it is simple in the descriptive set theory sense, namely that it is smooth, i.e., as simple as possible among equivalence relations on Borel spaces.

\subsection{The computational complexity perspective}

\biDefDec*
\begin{proof}
    Let $n$ be the maximal arity of relations of $\bC,\bD$. Let $\bC',\bD'$ be the model-complete cores of $\bC,\bD$; by Theorem~\ref{thm:computeop} we can compute them along with finitely bounded homogeneous Ramsey structures $\bA',\bB'$ over which they are optimally presented. Let $\bC'',\bD''$ be the expansions of $\bC',\bD'$ by all xy-definable relations of arity at most $n$.

    We briefly explain why $\bC'',\bD''$ are computable from $\bC',\bD'$. The latter structures are model-complete cores so any fo-definable relation is in fact ep-definable. And in this context the definability of relations by (ep/pp)-formulas is decidable. This is the content of the main theorems of~\cite{BPT-decidability-of-definability}, Theorem~1, and Theorem~2. Thus, we may add the finitely many relations to $\bC',\bD'$ in a computable way.
    
    Suppose that $\bC',\bD'$ are xy-bi-definable via a bijection $\theta\colon\Omega\to\Omega$.
    This map is an xy-bi-definition of $\bC'',\bD''$ as well, and using $\theta$, we obtain a common signature $\tau$ of $\bC'',\bD''$ turning $\theta$ into an isomorphism $\bC'' \to \bD''$; in particular they are homomorphically equivalent. Note that, $\bC', \bC''$ as well as $\bD',\bD''$ have the same sets of endomorphisms so $\bC'',\bD''$ are optimally presented over $\bA',\bB'$.
    Thus, Lemma~\ref{lem:optimalpresentation} implies that $\bA'$ and $\bB'$ are fo-bi-definable by an isomorphism $\bC'' \to\bD''$.
    Applying Lemma~\ref{lem:criterion-bidefinability} 
    provides us with functions $f,g\colon \Omega \to \Omega$ such that
    \begin{itemize}
        \item[(i)] $f$ is $(\bA',\bB')$-canonical;
        \item[(ii)] $g$ is $(\bB',\bA')$-canonical;
        \item[(iii)] $g \circ f$ preserves all orbits of $\bA'$;
        \item[(iv)] $f \circ g$ preserves all orbits of $\bB'$.
    \end{itemize}  
    Moreover, for every relation $R$ in the common signature of $\bC'',\bD''$
    \begin{itemize}
        \item[(v)] $f$ takes $\bA'$-orbits in $R^{\bC''}$ to $\bB'$-orbits in $R^{\bD''}$;
        \item[(vi)] $g$ takes $\bB'$-orbits in $R^{\bD''}$ to $\bA'$-orbits in $R^{\bC''}$.
    \end{itemize}    
    Conversely, if such functions exist, then $\bC'',\bD''$ are isomorphic by a fo-bi-definition of $\bA',\bB'$. 
    
    Hence, to correctly decide xy-bi-definability of $\bC',\bD'$, it is sufficient to check if there exists a common signature $\tau$ for $\bC'',\bD''$ as well as functions $f,g$ as in item~(2) of Lemma~\ref{lem:criterion-bidefinability}. 
    This poses no problem as there are only finitely many ways to assign a signature to $\bC'',\bD''$ and $\bA',\bB'$ are finitely bounded homogeneous, so we can compute the finitely many behaviours of canonical functions by Theorem~\ref{thm:computingBehaviour}.
\end{proof}

\subsection{The descriptive set theory perspective}

This section contains an elementary proof of the fact that for $\cS \in \{\Aut,\End,\Pol\}$ the assignment $\bA \mapsto \Th(\bE_{\cS(\bA)})$ is a Borel reduction from the equivalence relation of bi-definability on the class of $\omega$-categorical structures to equality on the standard Borel space $\bR$. Here, $\Th(-)$ denotes the first-order theory of a structure.

\begin{definition}
    Let $\mathcal{A} \subseteq \bigcup_{n \geq 1} \Omega^{\Omega^n}$ and define $\cA^{(n)} := \cA \cap \Omega^{\Omega^n}$, $n \geq 1$.
    We define the structure $\bE_\cA$ to have domain $\Omega$, and for all $k,n\geq 1$ to have the relation 
    \begin{equation*}
            O_{k,n}^{\bE_\cA} = \{ (x_1,\dots, x_k,f(x_1,\dots,x_k) ) \, \mid \, x_1,\dots,x_k \in \Omega^n, \, f\in \cA^{(k)} \} \subseteq \Omega^{(k+1)n}.
    \end{equation*}

\end{definition}

\begin{lemma}\label{lem:OrbitalStructuresIsomorphicIff}
    Let $\cA\subseteq \bigcup_{n \geq 1} \Omega^{\Omega^n}$ and $\cB\subseteq \bigcup_{n \geq 1} \Omega^{\Omega^n}$ be topologically closed. Then the isomorphisms $\theta\colon \bE_\cA \to \bE_\cB$ are precisely the bijections $\Omega \to \Omega$ such that $\cB = \theta \circ \cA \circ \theta^{-1}$.
\end{lemma}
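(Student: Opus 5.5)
We prove both inclusions directly from the definition of $\bE_\cA$, reading $\theta\circ\cA\circ\theta^{-1}$ as the set of all $\theta\circ f\circ(\theta^{-1}\times\dots\times\theta^{-1})$ for $f\in\cA$. The key observation is that a bijection $\theta$ of $\Omega$ maps $O_{k,n}^{\bE_\cA}$ (applied coordinatewise) onto the relation built in the same way from the conjugated family. Explicitly, for $f\in\cA^{(k)}$ and $x_1,\dots,x_k\in\Omega^n$, the tuple $(x_1,\dots,x_k,f(x_1,\dots,x_k))$ is sent to $(\theta x_1,\dots,\theta x_k,\theta f(x_1,\dots,x_k))$. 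This equals $(y_1,\dots,y_k,f^\theta(y_1,\dots,y_k))$ with $y_i=\theta x_i$ and $f^\theta:=\theta\circ f\circ(\theta^{-1},\dots,\theta^{-1})$. Hence $\theta(O^{\bE_\cA}_{k,n})=O^{\bE_{\cA^\theta}}_{k,n}$ for all $k,n$.

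\emph{Easy direction.} If $\cB=\cA^\theta$, then by the observation $\theta$ maps every relation of $\bE_\cA$ onto the corresponding relation of $\bE_\cB$. So $\theta$ and $\theta^{-1}$ are homomorphisms, and $\theta$ is an isomorphism. No closedness is needed here.

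\emph{Converse.} Let $\theta\colon\bE_\cA\to\bE_\cB$ be an isomorphism. By the observation, $O^{\bE_{\cA^\theta}}_{k,n}=O^{\bE_\cB}_{k,n}$ for all $k,n$. Since $\cA^\theta$ is again topologically closed (conjugation by $\theta$ is a homeomorphism), it suffices to prove the following claim.

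\emph{Claim:} a topologically closed family $\cA$ is recovered from the relations $O_{k,n}^{\bE_\cA}$, namely
\[
\cA^{(k)}=\{h\colon\Omega^k\to\Omega \mid \text{for all } n \text{ and all } x_1,\dots,x_k\in\Omega^n,\ (x_1,\dots,x_k,h(x_1,\dots,x_k))\in O^{\bE_\cA}_{k,n}\}.
\]

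The inclusion ``$\subseteq$'' is immediate. For ``$\supseteq$'', take $h$ in the right-hand side and a finite set $S\subseteq\Omega^k$. Enumerate $S=\{s^1,\dots,s^n\}$ and let $x_i\in\Omega^n$ be the tuple of $i$-th coordinates $(s^1_i,\dots,s^n_i)$. Membership of $(x_1,\dots,x_k,h(x_1,\dots,x_k))$ in $O^{\bE_\cA}_{k,n}$ yields some $f\in\cA^{(k)}$ agreeing with $h$ on $S$. Since $S$ was arbitrary, $h$ lies in the closure of $\cA^{(k)}$. The space $\Omega^{\Omega^k}$ is clopen in the sum topology, so $\cA^{(k)}$ is closed, and therefore $h\in\cA^{(k)}$.

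Applying the claim to $\cA^\theta$ and $\cB$ gives $(\cA^\theta)^{(k)}=\cB^{(k)}$ for every $k$, hence $\cB=\theta\circ\cA\circ\theta^{-1}$. The only step needing care is this reconstruction: the coding of a finite set $S$ of $k$-tuples by $k$ tuples of length $n=|S|$, together with the use of closedness.
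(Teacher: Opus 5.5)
Your proof is correct and follows essentially the same route as the paper. The easy direction is immediate from the definition of $O_{k,n}$, and the converse combines closedness with the observation that membership in $O_{k,n}^{\bE_\cB}$ witnesses agreement with some member of $\cB^{(k)}$ on $n$ points. The only difference is packaging: you isolate a reconstruction claim and apply it to both $\cA^\theta$ and $\cB$, which requires that conjugation preserves closedness. The paper instead shows $\theta\circ\cA\circ\theta^{-1}\subseteq\cB$ directly and leaves the reverse inclusion to symmetry. You also make explicit the coding of a finite set of $k$-tuples by $k$ tuples of length $n$, which the paper leaves implicit.
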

\begin{proof}
    First, let $\theta\colon \Omega \to \Omega$ be a bijection such that
    \begin{equation*}
        \cB = \theta \circ \cA \circ \theta^{-1}.
    \end{equation*} 
    To show that it is an isomorphism of structures $\bE_\cA \to \bE_\cB$ it suffices to show that $\theta( O_{k,n}^{\bE_\cA} ) \subseteq O_{k,n}^{\bE_\cB}$ holds for all $k,n \geq 1$. But this is immediate from the definition of the relation $O_{k,n}$. 

    Conversely let $\theta$ be an isomorphism $\bE_\cA \to \bE_\cB$. It suffices to prove that $\theta \circ \cA \circ \theta^{-1} \subseteq \cB$.  Let $k\geq 1$ and $f\in \cA^{(k)}$; we show that $\theta \circ f \circ \theta^{-1} \in \cB$. Since $\cB$ is closed with respect to the topology of pointwise convergence we may equivalently show that for all $n \geq 1$ and all $x_1,\dots,x_k \in \Omega^n$ there is $g \in \cB^{(k)}$ with $\theta \circ  f \circ\theta^{-1} (x_1,\dots,x_k) = g(x_1,\dots,x_k)$. But this follows from the fact that the tuple 
    \begin{equation*}
        \big (\theta^{-1}(x_1),\dots,\theta^{-1}(x_k),f(\theta^{-1}(x_1),\dots,\theta^{-1}(x_k))\big)
    \end{equation*}
    is contained in $O_{k,n}^{\bE_\cA}$, so an application of the isomorphism $\theta$ shows 
    \begin{equation*}
        (x_1,\dots,x_k,\theta \circ f\circ \theta^{-1}(x_1,\dots,x_k)) \in O_{k,n}^{\bE_\cB},
    \end{equation*}
implying the existence of $g \in \cB$ with $\theta \circ f\circ \theta^{-1}(x_1,\dots,x_k) = g(x_1,\dots,x_k)$.
\end{proof}

\begin{corollary}\label{cor:orbitalIsomorphicIff}
    Let $\bA,\bB$ be structures and let $\cS\in\{\Aut,\End,\Pol\}$. Then the isomorphisms of structures $\bE_{\cS(\bA)}\to\bE_{\cS(\bB)}$ correspond to isomorphism of the actions $\cS(\bA)\curvearrowright \Omega \to \cS(\bB) \curvearrowright \Omega$.
\end{corollary}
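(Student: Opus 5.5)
The corollary follows directly from Lemma~\ref{lem:OrbitalStructuresIsomorphicIff}, applied with $\cA:=\cS(\bA)$ and $\cB:=\cS(\bB)$. The plan has three steps.

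First, I check that the hypotheses of the lemma hold. Each of $\Aut(\bA)$, $\End(\bA)$ and $\Pol(\bA)$ is a subset of $\bigcup_{n\geq 1}\Omega^{\Omega^n}$. The automorphism group and the endomorphism monoid consist of unary maps only. Each is topologically closed, as recorded in the preliminaries. For $\Aut$ one small point needs care: closedness is stated there within the space of permutations. It must hold within all of $\Omega^\Omega$, which is where the lemma's closedness argument takes place. This is true. A pointwise limit of automorphisms is an embedding of $\bA$ into itself. In the relevant setting the lemma's argument only uses closedness in the form that $\theta\circ f\circ\theta^{-1}$ agrees on every finite set with some member of $\cB$. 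Combined with invertibility, this places $\theta\circ f\circ\theta^{-1}$ in $\Aut(\bB)$, because the inverse $\theta\circ f^{-1}\circ\theta^{-1}$ is also approximated by members of $\cB$.

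Second, the lemma then says that the isomorphisms $\theta\colon\bE_{\cS(\bA)}\to\bE_{\cS(\bB)}$ are exactly the bijections $\theta$ of $\Omega$ with $\theta\circ\cS(\bA)\circ\theta^{-1}=\cS(\bB)$. By the definition of isomorphism of actions, such a $\theta$ is precisely a bijection for which $\alpha\mapsto\alpha^\theta$ maps $\cS(\bA)$ onto $\cS(\bB)$. This map automatically preserves composition and inverses. In the clone case it also preserves arities and sends the $i$-th $n$-ary projection to the $i$-th $n$-ary projection, since conjugation fixes projections. So the two notions coincide.

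The only genuinely delicate step is the closedness caveat for $\Aut$ in the first step. Everything else is unpacking definitions.
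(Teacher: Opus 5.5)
Your proposal is correct and is exactly the paper's proof: apply Lemma~\ref{lem:OrbitalStructuresIsomorphicIff} with $\cA:=\cS(\bA)$, $\cB:=\cS(\bB)$, using closedness, and then unpack the definition of isomorphic actions.

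One correction to your $\Aut$ caveat. $\Aut(\bA)$ is in general \emph{not} closed in $\Omega^\Omega$; for $(\bQ,<)$ the pointwise closure contains non-surjective self-embeddings. So your ``This is true'' is wrong. Your repair, however, is right and is all the lemma needs. The map $\theta\circ f\circ\theta^{-1}$ is a permutation that agrees on each finite set with some automorphism of $\bB$. Hence it preserves and reflects every relation of $\bB$, so it lies in $\Aut(\bB)$. Closedness within the space of permutations therefore suffices, and approximating the inverse is not even required.
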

\begin{proof}
    For a structure $\bA$ the sets of operations $\Aut(\bA)$, $\End(\bA)$, and $\Pol(\bA)$ are topologically closed. Thus Lemma~\ref{lem:OrbitalStructuresIsomorphicIff} applies.
\end{proof}

\begin{lemma}\label{lem:orbitStructuresOmegaCat}
    If $\bA$ is an $\omega$-categorical structure, then the structures $\bE_{\Aut(\bA)},\bE_{\End(\bA)}, \bE_{\Pol(\bA)}$ are $\omega$-categorical themselves.
\end{lemma}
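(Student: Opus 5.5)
The plan is to show that $\Aut(\bA)$ is contained in the automorphism group of each of the three structures $\bE_{\Aut(\bA)}$, $\bE_{\End(\bA)}$, $\bE_{\Pol(\bA)}$. Since these structures have domain $\Omega$, and $\Aut(\bA)$ is oligomorphic on $\Omega$ by $\omega$-categoricity of $\bA$, any permutation group containing $\Aut(\bA)$ has at most as many orbits on $\Omega^m$ as $\Aut(\bA)$ does, for every $m\geq 1$. Hence the automorphism group of each $\bE_{\cS(\bA)}$ is oligomorphic, which is exactly $\omega$-categoricity in the sense of the paper. A countably infinite signature causes no trouble here, because the definition of $\omega$-categoricity used in the paper refers only to the automorphism group.

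So it remains to check that every $\alpha\in\Aut(\bA)$ preserves each relation $O_{k,n}$ in both directions. By Lemma~\ref{lem:OrbitalStructuresIsomorphicIff}, applied with $\cA=\cB=\cS(\bA)$ (which is topologically closed), this amounts to showing $\alpha\circ\cS(\bA)\circ\alpha^{-1}=\cS(\bA)$. Alternatively one can verify it directly. Suppose
\[
  (x_1,\dots,x_k,f(x_1,\dots,x_k))\in O_{k,n}
\]
with $f\in\cS(\bA)^{(k)}$. Applying $\alpha$ coordinatewise gives $(\alpha x_1,\dots,\alpha x_k,\alpha f(x_1,\dots,x_k))$, and this equals $(y_1,\dots,y_k,f^\alpha(y_1,\dots,y_k))$ with $y_i=\alpha x_i$ and $f^\alpha=\alpha\circ f\circ(\alpha^{-1},\dots,\alpha^{-1})$.

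The key step is that $f^\alpha\in\cS(\bA)$. For $\cS=\Aut$ this is closure of a group under conjugation. For $\End$ and $\Pol$, the function $f^\alpha$ is a composition of polymorphisms (respectively endomorphisms) of $\bA$, since automorphisms are in particular endomorphisms and unary polymorphisms, and $\Pol(\bA)$ is a clone, $\End(\bA)$ a monoid. The same argument applied to $\alpha^{-1}$ gives the reverse inclusion, so $\alpha$ is an automorphism of $\bE_{\cS(\bA)}$.

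I do not expect a real obstacle here. The only point needing care is the observation that adding automorphisms can only merge orbits, so that oligomorphicity passes from $\Aut(\bA)$ to the larger group $\Aut(\bE_{\cS(\bA)})$.
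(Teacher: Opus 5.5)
Your proposal is correct and is the paper's argument: both show that $\Aut(\bA)\subseteq\Aut(\bE_{\cS(\bA)})$, so oligomorphicity passes to the larger group. You also spell out the conjugation step, namely $f^\alpha=\alpha\circ f\circ(\alpha^{-1},\dots,\alpha^{-1})\in\cS(\bA)$ together with the reverse inclusion via $\alpha^{-1}$, which the paper simply calls clear.
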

\begin{proof}
    Since $\omega$-categoricity translates to reducts we simply need to observe that $\Aut(\bA)$ is contained in the automorphism groups of $\bE_{\Aut(\bA)},\bE_{\End(\bA)},\bE_{\Pol(\bA)}$ respectively. But this is clearly the case as automorphisms of $\bA$ preserve the relations of the structures $\bE_{\Aut(\bA)},\bE_{\End(\bA)},\bE_{\Pol(\bA)}$.
\end{proof}

We would like to point out that the first-order version of the following results essentially appears as Proposition~4.2 in~\cite{NiesSchlichtTent}.

\biDefSmooth*
\begin{proof}
    We show the statement for pp-bi-definability as the other two cases are proved analogously. Smoothness of the equivalence relation of pp-bi-definability is proved by Borel reducing it to equality on a standard Borel space. This reduction is given by the mapping $\bA \mapsto \Th(\bE_{\Pol(\bA)})$. We first demonstrate that it is in fact a reduction, i.e., that $\bA,\bB$ are pp-bi-definable if and only if $\Th(\bE_{\Pol(\bA)})=\Th(\bE_{\Pol(\bB)})$. To this end observe that Lemma~\ref{lem:biDefAreActionIsos} implies that $\bA,\bB$ are pp-bi-definable if and only if the actions of their function clones $\Pol(\bA)\curvearrowright\Omega$ and $\Pol(\bB) \curvearrowright \Omega$ are isomorphic. By Corollary~\ref{cor:orbitalIsomorphicIff} this is equivalent to the structures $\bE_{\Pol(\bA)}$ and $\bE_{\Pol(\bB)}$ being isomorphic, and, as these structures are $\omega$-categorical by Lemma~\ref{lem:orbitStructuresOmegaCat}, their theories are equal.

    It remains to show that the reduction is Borel, which is standard (see~\cite{NiesSchlichtTent}, and also~\cite{WEISmooth}). 
    Namely, we can assign in a Borel way to a structure $\bA$, the structure $\bE_{\Pol(\bA)}$. Moreover assigning the theory to a countable structure in a countable signature is Borel as well. The theory can be encoded as sequence in $\omega$, and $\omega^\omega$ is a standard Borel space isomorphic to the reals.
\end{proof}

\section{On bi-interpretability}\label{sect:BiInterpretability}

Again, we assume all structures to have a fixed countable set $\Omega$ as their domain unless explicitly defined otherwise.

\subsection{From bi-definability to bi-interpretability: the group case}

Given an $\omega$-categorical structure $\bA$ we define a new structure $\bA^\eq$ as follows: its domain $A^\eq$ consists of all equivalence classes of $\Aut(\bA)$-invariant equivalence relations on subsets of finite powers of $\Omega$; its relations are all $\Aut(\bA)$-invariant relations on this domain in the natural action of $\Aut(\bA)$ thereon. We identify the domain $\Omega$ of $\bA$ with a subset of $\bA^\eq$ in the natural way.

Note that every $\alpha \in \Aut(\bA)$ induces an automorphism of $\bA^\eq$ and that every automorphism is of that form, in particular $\Aut(\bA)\curvearrowright A^\eq$ and $\Aut(\bA^\eq)$ are equal. 

\begin{definition}[Density]
Let $G\curvearrowright X$ be a permutation group. A subset $Y\subseteq X$ is \emph{dense} if for all $x\in X$ there is finite $y\in Y^{<\omega}$ such that $G_y\subseteq G_x$. A subset of a structure is dense if it satisfies the definition with respect to the automorphism group of that structure.
\end{definition}

\begin{lemma}\label{lem:densityOfBiint}
    Let $\bA,\bB$ be $\omega$-categorical structures and $\Gamma$ be a fo-bi-interpretation of $\bB$ in $\bA$ with domain $U$ and kernel ${\sim}\subseteq U^2$. Then the quotient $U/_\sim$ is a dense subset of $\bA^\eq$.
\end{lemma}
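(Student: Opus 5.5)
Since every point of $\bA^\eq$ is an equivalence class of some $\Aut(\bA)$-invariant equivalence relation on a subset of $\Omega^m$, it is determined by, hence fixed by the stabiliser of, any representative tuple. So it suffices to show the following: for every $a \in \Omega$ there is a finite tuple $y$ of elements of $U/_\sim$ with $\Aut(\bA)_y \subseteq \Aut(\bA)_a$. Indeed, if $x\in A^\eq$ is the class of a tuple $(a_1,\dots,a_m)$, we concatenate the tuples $y$ obtained for the $a_i$; then $\Aut(\bA)_y \subseteq \Aut(\bA)_{(a_1,\dots,a_m)} \subseteq \Aut(\bA)_x$. Here $\Aut(\bA)$ acts on $U/_\sim$ because $U$ and $\sim$ are fo-definable in $\bA$, hence $\Aut(\bA)$-invariant, so $U/_\sim$ is genuinely a subset of $A^\eq$.

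To find $y$, I use the second half of the bi-interpretation. There is a fo-interpretation $\Delta$ of $\bA$ in $\bB$, a partial surjection from $\Omega'^{\,\ell}$ onto $\Omega$, such that the graph of $\Delta\circ\Gamma$ (or the relevant composition mapping tuples of $U$ to $\Omega$) is fo-definable in $\bA$. Fix $a\in\Omega$. Choose $b=(b_1,\dots,b_\ell)$ in the domain of $\Delta$ with $\Delta(b)=a$. For each $i$ choose $u_i\in U$ with $\Gamma(u_i)=b_i$, and let $y=([u_1]_\sim,\dots,[u_\ell]_\sim)$. The definable graph then gives a first-order formula $\psi$ over $\bA$, invariant under replacing each $u_i$ by a $\sim$-equivalent tuple, such that $\psi(u_1,\dots,u_\ell,c)$ holds iff $c=\Delta(\Gamma(u_1),\dots,\Gamma(u_\ell))$. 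Now take $\alpha\in\Aut(\bA)_y$. Then $\alpha u_i\sim u_i$ for every $i$. Since $\alpha$ preserves $\psi$, the formula $\psi(\alpha u_1,\dots,\alpha u_\ell,\alpha a)$ holds. By $\sim$-invariance of $\psi$, so does $\psi(u_1,\dots,u_\ell,\alpha a)$. Uniqueness of the witness gives $\alpha a=a$, so $\Aut(\bA)_y\subseteq\Aut(\bA)_a$.

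The main obstacle is bookkeeping the precise meaning of ``the graph of $\Delta\circ\Gamma$ is definable in $\bA$'': it is the relation $\{(u_1,\dots,u_\ell,c) : \Delta(\Gamma(u_1),\dots,\Gamma(u_\ell))=c\}$ on $U^\ell\times\Omega$. I need to check that this relation is $\sim$-invariant in the $u$-coordinates and single-valued in $c$. Both follow directly from its description, since $\Gamma$ has kernel $\sim$ and $\Delta$ is a function. Beyond that, only $\Aut(\bA)$-invariance of fo-definable relations is used; $\omega$-categoricity is not even needed.
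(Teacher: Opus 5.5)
Your proof is correct, but it takes a different route from the paper's.

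The paper argues globally. It cites Ahlbrandt--Ziegler for the fact that $\Gamma$ induces a topological isomorphism $\phi\colon\Aut(\bA)\to\Aut(\bB)$ compatible with the induced bijection $\tilde\Gamma\colon U/_\sim\to\Omega'$. Hence the restriction map $\Aut(\bA)\to\Aut(\bA)\curvearrowright U/_\sim$ factors as $\psi^{-1}\circ\phi$ and is a topological group isomorphism. Openness of this map then says that each open stabiliser $\Aut(\bA)_a$ contains the pointwise stabiliser of finitely many classes in $U/_\sim$, which is the required density.

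You prove that inclusion $\Aut(\bA)_y\subseteq\Aut(\bA)_a$ directly. You use only the $\Aut(\bA)$-invariance of the graph of $\Delta\circ\Gamma$ together with the surjectivity of $\Delta$ and $\Gamma$. This is essentially the computation hidden inside the continuity of the inverse in the Ahlbrandt--Ziegler correspondence.

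Your version is self-contained and yields the lemma under weaker hypotheses. You never use definability of $\Gamma\circ\Delta$ in $\bB$, definability of $\Gamma$-preimages of relations other than equality, or $\omega$-categoricity. The paper's version is shorter given the citation. It also sets up the factorisation of $\phi$ through $\Aut(\bA)\curvearrowright U/_\sim$ that is reused in Corollary~\ref{cor:iso_setinduced}.
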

\begin{proof}
    Let $\Omega'$ be the domain of $\bB$ and let $\tilde\Gamma \colon U/_\sim \to \Omega'$ be the bijection induced by $\Gamma$. 
    The fo-bi-interpretation $\Gamma$ induces a topological isomorphism $\phi\colon \Aut(\bA) \to \Aut(\bB)$,
    which is the unique map such that for all $\alpha \in \Aut(\bA)$ the following diagram commutes (see~\cite{AhlbrandtZiegler}). 

    \[\begin{tikzcd}
	   {U/_\sim} && \Omega' \\
	   \\
    	{U/_\sim} && \Omega'
	   \arrow["{\tilde\Gamma}", from=1-1, to=1-3]
        \arrow["\alpha"', from=1-1, to=3-1]
	   \arrow["{\phi(\alpha)}", from=1-3, to=3-3]
	   \arrow["{\tilde\Gamma}", from=3-1, to=3-3]
    \end{tikzcd}\]
    
    This precisely means that $\tilde\Gamma$ induces an isomorphism of permutation group actions $\psi\colon\Aut(\bA)\curvearrowright U/_\sim \to \Aut(\bB)\curvearrowright \Omega'$. 
    Thus the natural map $\Aut(\bA) \to \Aut(\bA)\curvearrowright U/_\sim$, which can be written as the composition $\psi^{-1} \circ \phi$, is a topological group isomorphism, and in particular open, which readily implies that $U/_\sim$ is dense in $\bA^\eq$.
\end{proof}

\begin{definition}[Entanglement]
    Let $\bA$ be an $\omega$-categorical structure. Let $B\subseteq \bA^\eq$. 
    Let $a\in \Omega$, and let $b/_\sim \in \bA^\eq$. We say that $a$ \emph{entangles} $b/_\sim$ if $a$ is contained in all tuples of $b/_\sim$. 
    We say that $a$ \emph{strongly entangles}  $b/_\sim\in B$ if it entangles $b/_\sim$ but no other element of $B$ (this depends on $B$ which will be clear from context). 
\end{definition}

\begin{lemma}\label{lem:entanglement_establishing}
Let $\bA$ be an $\omega$-categorical structure without algebraicity and let $B \subseteq \bA^\eq$ be a dense $\Aut(\bA)$-invariant set such that $\Aut(\bA)\curvearrowright B$ is without algebraicity. The following statements hold:
\begin{itemize}
    \item[(i)] $a\in \Omega$ entangles $b/_\sim\in B$ if and only if $a$ is algebraic over $b/_\sim$.
    \item[(ii)] every $a\in \Omega$ entangles some $b/_\sim\in B$.
    \item[(iii)] every $b/_\sim\in B$ is strongly entangled by some $a\in \Omega$.
    \item[(iv)] if $a$ entangles $b/_\sim$, and $a'$ strongly entangles $b/_\sim$ then $a$ and $a'$ are equal. 
    \item[(v)] every $a$ strongly entangles some $b/_\sim$.
\end{itemize}
\end{lemma}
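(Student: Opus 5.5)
The plan is to work throughout with $G:=\Aut(\bA)$ and to use one elementary observation repeatedly. An element $b/_\sim$ of $\bA^\eq$ is a class of tuples, and I write $E(b/_\sim)\subseteq\Omega$ for the set of elements that are entries of \emph{every} tuple in the class. This set is finite, since it is contained in the entries of any single tuple. It is also stabilised setwise by $G_{b/_\sim}$, since $G_{b/_\sim}$ permutes the tuples of the class. Moreover, for any tuple $c\in b/_\sim$ we have $G_c\subseteq G_{b/_\sim}$.

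For (i), if $a$ entangles $b/_\sim$ then $a\in E(b/_\sim)$, which is a finite $G_{b/_\sim}$-invariant set, so $a$ is algebraic over $b/_\sim$. Conversely, suppose $a$ is algebraic over $b/_\sim$ but some tuple $c\in b/_\sim$ avoids $a$. Since $G_c\subseteq G_{b/_\sim}$, the $G_c$-orbit of $a$ is finite. No algebraicity of $\bA$ then forces $a$ to be an entry of $c$, a contradiction.

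For (ii), density gives a finite tuple $(b_1,\dots,b_m)$ in $B$ with $G_{b_1,\dots,b_m}\subseteq G_a$. Suppose $a$ entangles none of the $b_i$, and pick tuples $c_i\in b_i$ avoiding $a$. Then $G_{c_1\cdots c_m}\subseteq G_{b_1,\dots,b_m}\subseteq G_a$, so $a$ is algebraic over $c_1\cdots c_m$ in $\bA$. By no algebraicity, $a$ is an entry of some $c_i$, a contradiction.

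For (iii), which I expect to be the main obstacle because it is where no algebraicity of $G\curvearrowright B$ must be used, fix $b/_\sim\in B$ and a tuple $c\in b/_\sim$, and write $E:=E(b/_\sim)$. Suppose, for contradiction, that every $a\in E$ entangles some element $b_a\in B$ different from $b/_\sim$. For the entries $a$ of $c$ that are not in $E$, choose some $b_a\in B$ entangled by $a$ using (ii). Each $b_a$ is entangled by $a$, so by (i) the element $a$ is algebraic over $b_a$. Hence the pointwise stabiliser $H$ of the finite family $(b_a)_{a\in c}$ has finite orbits on all entries of $c$. It therefore has a finite orbit on $c$, and since $G_c\subseteq G_{b/_\sim}$, also a finite orbit on $b/_\sim$. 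As $G\curvearrowright B$ is without algebraicity, $b/_\sim$ must equal some $b_a$. This is impossible in both cases. If $a\in E$, then $b_a\neq b/_\sim$ by choice. If $a\notin E$, then $b_a=b/_\sim$ would mean that $a$ entangles $b/_\sim$, i.e.\ $a\in E$. So some $a\in E$ strongly entangles $b/_\sim$; note that the same argument, run without the extra hypothesis, also shows $E\neq\emptyset$.

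For (iv), suppose $a\neq a'$, where $a$ entangles $b/_\sim$ and $a'$ strongly entangles it. By no algebraicity of $\bA$, the orbit of $a$ under $G_{a'}$ is infinite. Take any $g\in G_{a'}$. Then $g(b/_\sim)\in B$ by invariance of $B$, and it is entangled by $g(a')=a'$. Strong entanglement gives $g(b/_\sim)=b/_\sim$, so $g\in G_{b/_\sim}$ and hence $g(a)\in E(b/_\sim)$, which is a finite set. This contradicts the infinite orbit of $a$. Finally, (v) follows by combining the previous items: by (ii), $a$ entangles some $b/_\sim\in B$; by (iii), some $a'$ strongly entangles $b/_\sim$; and by (iv), $a=a'$.
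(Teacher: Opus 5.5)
Your proof is correct and follows the paper's argument item by item: (i) uses the finite $G_{b/_\sim}$-invariant set of entangling elements together with no algebraicity of $\bA$, (ii) uses density, (iii) applies no algebraicity of $\Aut(\bA)\curvearrowright B$ to the stabiliser of the chosen witnesses, and (iv) and (v) are argued exactly as in the paper. Your case split in (iii), between entries inside $E(b/_\sim)$ and entries outside it, simply spells out the appeal to (ii) that the paper's proof of (iii) leaves implicit.
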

\begin{proof}
    We write $G:=\Aut(\bA^\eq)$.

    (i)~For the forward direction, note that the set of all elements of $\Omega$ that entangle $b/_\sim$  is finite, invariant under  $G_{b/_\sim}$, and contains $a$. For the backward implication note that, if $a$ is algebraic over $b/_\sim$, then $a$ is algebraic over every $c\in b/_\sim$, so $a$ has to be contained in $c$ since $\bA$ has no algebraicity.
    
    (ii)~By the density of $B$ there exist $b_1/_{\sim_1},\ldots,b_n/_{\sim_n}\in B$ such that $G_a\supseteq G_{b_1/_{\sim_1},\ldots,b_n/_{\sim_n}}$. 
    Note that if we pick any tuples $c_i$ in $b_i/_{\sim_i}$, then $G_a\supseteq G_{c_1,\ldots,c_n}$, in particular $a$ is algebraic over the entries of $c_1,\dots,c_n$, which is a finite subset of $\Omega$. Thus, if we could pick all $c_i$ so that they do not contain $a$, we would arrive at a contradiction to $\bA$ not having algebraicity. Hence, there is $i$ such that $a$ entangles $b_i/_{\sim_i}$.

    (iii)~Pick $(a_1,\ldots,a_k)\in b/_\sim$. If none of the $a_i$ strongly entangled $b/_\sim$, then for all $i$ there would be $b_i/_{\sim_i}$ distinct from $b/_\sim$ that is entangled by $a_i$. But then $(a_1,\dots,a_k)$, and in particular $b/_\sim$ would be algebraic over $b_1/_{\sim_1},\dots,b_k/_{\sim_k}$ by~(i), contradicting that $\Aut(\bA)\curvearrowright B$ is without algebraicity.

    (iv)~If $\alpha \in \Aut(\bA)$ fixes $a'$, then $\alpha(b/_\sim) = \alpha(b)/_\sim$ is entangled by $a'$. Since $a'$ strongly entangles $b/_\sim$, the automorphism $\alpha$ fixes $b/_\sim$. By item~(i), $a$ is algebraic over $b/_\sim$, and so the previous argument shows that it is also algebraic over $a'$. Since $\bA$ is without algebraicity we obtain $a=a'$.

    (v)~The element $a$ entangles some $b/_\sim$ by~(ii); by~(iii), we know that $b/_\sim$ is strongly entangled by some $a'$. By~(iv) it follows that $a=a'$.
\end{proof}
\begin{lemma}\label{lem:entanglementBijection}
    Let $\bA$ and $B\subseteq \bA^{\eq}$ be as in Lemma~\ref{lem:entanglement_establishing}. Then 
    entanglement is a bijection $\theta\colon \Omega\to B$ preserved by $\Aut(\bA)$. Hence, the actions of $\Aut(\bA)$ on $\Omega$ and $B$ are isomorphic via $\theta$. 
\end{lemma}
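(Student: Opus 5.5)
Define $\theta\colon\Omega\to B$ by sending $a$ to the unique $b/_\sim\in B$ that $a$ strongly entangles. Lemma~\ref{lem:entanglement_establishing}(v) guarantees that such an element exists. For uniqueness, suppose $a$ strongly entangles both $b/_\sim$ and $b'/_{\sim'}$. Since $a$ entangles $b/_\sim$ and strongly entangles it, $a$ entangles no other element of $B$. But $a$ also entangles $b'/_{\sim'}$, so the two elements coincide. Hence $\theta$ is a well-defined function.

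Next we show that $\theta$ is a bijection. For surjectivity, Lemma~\ref{lem:entanglement_establishing}(iii) says that every $b/_\sim\in B$ is strongly entangled by some $a$, and then $\theta(a)=b/_\sim$. For injectivity, let $\theta(a)=\theta(a')=b/_\sim$. Then $a$ entangles $b/_\sim$ and $a'$ strongly entangles it, so $a=a'$ by Lemma~\ref{lem:entanglement_establishing}(iv). Consequently the strong-entanglement relation, viewed as a subset of $\Omega\times B$, is exactly the graph of the bijection $\theta$.

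Then we check equivariance, i.e.\ $\theta(\alpha(a))=\alpha(\theta(a))$ for every $\alpha\in\Aut(\bA)$. Entanglement is defined purely in terms of membership of $a$ in the tuples of a class. Because $\alpha$ maps the class $b/_\sim$ onto the class $\alpha(b)/_\sim$ (the equivalence relation is $\Aut(\bA)$-invariant), $a$ entangles $b/_\sim$ if and only if $\alpha(a)$ entangles $\alpha(b/_\sim)$. The set $B$ is $\Aut(\bA)$-invariant, so $\alpha$ permutes $B$. Therefore the condition ``$a$ entangles no other element of $B$'' also transfers, and strong entanglement is preserved by $\alpha$. This gives $\theta\circ\alpha=\alpha\circ\theta$ on $\Omega$, so $\theta$ conjugates the action $\Aut(\bA)\curvearrowright\Omega$ onto $\Aut(\bA)\curvearrowright B$. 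In other words, $\alpha\mapsto\alpha^\theta$ agrees with the natural restriction map $\Aut(\bA)\to\Aut(\bA)\curvearrowright B$, and hence the two actions are isomorphic in the sense defined in the preliminaries.

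The main care point is the uniqueness/well-definedness step, i.e.\ making sure that the relation ``$a$ strongly entangles $b/_\sim$'' is functional in both directions; this is exactly what items (iii)--(v) of Lemma~\ref{lem:entanglement_establishing} supply. The remaining steps are bookkeeping.
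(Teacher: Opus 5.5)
Your proof is correct and is essentially the paper's argument: the paper first notes, via items (iii) and (iv), that entanglement and strong entanglement coincide and then reads off the bijection, whereas you define $\theta$ through strong entanglement using (v), check bijectivity with (iii) and (iv), and spell out the equivariance that the paper only calls easy. The statement, and its later use in Lemma~\ref{lem:EntanglementMonoids}, concern plain entanglement, so do state explicitly what your well-definedness step already gives: since $a$ strongly entangles $\theta(a)$, it entangles no other element of $B$, and hence the entanglement relation itself is the graph of $\theta$.
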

\begin{proof}
    The notions of entanglement and strong entanglement agree by Lemma~\ref{lem:entanglement_establishing}: if $a\in \Omega$ entangles $b/_\sim \in B$, then there is $a'\in \Omega$ that strongly entangles $b/_\sim$, so $a=a'$. Thus entanglement defines a mapping $\Omega\to B$ that is easily seen to be a bijection using Lemma~\ref{lem:entanglement_establishing}. It is easy to see that it is preserved by $\Aut(\bA)$.
\end{proof}

\begin{corollary}\label{cor:iso_setinduced}
    Let $\bA,\bB$ be $\omega$-categorical and without algebraicity. Then any topological isomorphism between $\Aut(\bA)$ and $\Aut(\bB)$ is an isomorphism of permutation group actions.
\end{corollary}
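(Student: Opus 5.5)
Given a topological isomorphism $\phi\colon \Aut(\bA)\to\Aut(\bB)$, the plan is to produce a bijection $\theta\colon\Omega\to\Omega'$ with $\phi(\alpha)=\theta\circ\alpha\circ\theta^{-1}$ for every $\alpha$. I would first realise $\phi$ by a fo-bi-interpretation and then use entanglement to push it down from imaginaries to real elements.

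First, by Theorem~\ref{thm:BiIntVsTopIso}, $\bA$ and $\bB$ are fo-bi-interpretable. Moreover, by the Coquand/Ahlbrandt--Ziegler correspondence used in the proof of Lemma~\ref{lem:densityOfBiint}, we may take a fo-bi-interpretation $\Gamma$ of $\bB$ in $\bA$ whose induced topological isomorphism is exactly the given $\phi$. I expect this to be the main subtle point: one must make sure the interpretation produced realises the given $\phi$ and not merely some topological isomorphism. The standard construction does this. Take $U$ to be the $\Aut(\bA)$-orbit of a suitable finite tuple $a$ such that $\Aut(\bA)_a\subseteq \phi^{-1}(\Aut(\bB)_{b})$, where $b$ enumerates representatives of the finitely many $\Aut(\bB)$-orbits. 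Then send $\alpha(a)$ to $\phi(\alpha)(b)$, and quotient by the kernel.

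Let $U$ be the domain of $\Gamma$ and $\sim$ its kernel, and let $B:=U/_\sim\subseteq \bA^\eq$. By construction $B$ is $\Aut(\bA)$-invariant, and the induced bijection $\tilde\Gamma\colon B\to\Omega'$ satisfies $\tilde\Gamma\circ\alpha=\phi(\alpha)\circ\tilde\Gamma$ for all $\alpha\in\Aut(\bA)$. By Lemma~\ref{lem:densityOfBiint}, $B$ is dense in $\bA^\eq$. Since $\tilde\Gamma$ is an isomorphism of the actions $\Aut(\bA)\curvearrowright B$ and $\Aut(\bB)\curvearrowright\Omega'$, and the latter is without algebraicity, the action $\Aut(\bA)\curvearrowright B$ is without algebraicity as well. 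Here we use that stabilisers correspond under $\phi$ and that orbits correspond, so finiteness of orbits of stabilisers transfers.

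All hypotheses of Lemma~\ref{lem:entanglement_establishing} now hold, so Lemma~\ref{lem:entanglementBijection} yields an $\Aut(\bA)$-equivariant bijection $\eta\colon\Omega\to B$. Set $\theta:=\tilde\Gamma\circ\eta\colon\Omega\to\Omega'$. For every $\alpha\in\Aut(\bA)$ we get
\[
\theta\circ\alpha=\tilde\Gamma\circ\alpha\circ\eta=\phi(\alpha)\circ\tilde\Gamma\circ\eta=\phi(\alpha)\circ\theta .
\]
Hence $\phi(\alpha)=\alpha^\theta$, so $\phi$ is induced by conjugation by $\theta$, and $\Aut(\bA)^\theta=\Aut(\bB)$ by surjectivity of $\phi$. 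This shows that $\phi$ is an isomorphism of permutation group actions.
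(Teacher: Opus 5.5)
Your proposal is correct and follows the paper's proof essentially verbatim. You realise $\phi$ by a fo-bi-interpretation (Coquand/Ahlbrandt--Ziegler), observe that $U/_\sim$ is invariant, dense by Lemma~\ref{lem:densityOfBiint}, and without algebraicity via $\tilde\Gamma$, and then let the entanglement bijection $\eta$ of Lemma~\ref{lem:entanglementBijection} witness that the action map $\psi_1$ is an isomorphism of actions, so that $\phi=\psi_2\circ\psi_1$ is one too. The only blemish is your sketch of the ``standard construction'': as written, $\alpha(a)\mapsto\phi(\alpha)(b)$ lands in tuples rather than in $\Omega'$, so you should take one tuple $a_i$ with $\Aut(\bA)_{a_i}\subseteq\phi^{-1}(\Aut(\bB)_{b_i})$ per orbit representative $b_i$ and map $\alpha(a_i)\mapsto\phi(\alpha)(b_i)$. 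This is exactly what the cited result supplies, so nothing in your argument hinges on it.
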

\begin{proof}
    Let $\phi\colon\Aut(\bA) \to \Aut(\bB)$ be a topological group isomorphism. It is induced by a fo-bi-interpretation $\Gamma$ of $\bB$ in $\bA$ (Coquand; see~\cite{AhlbrandtZiegler}). Let $U$ be the domain and $\sim$ be the kernel of $\Gamma$, and let $\tilde \Gamma \colon U/_\sim \to \Omega'$ be the induced bijection, where $\Omega'$ denotes the domain of $\bB$. Saying that $\phi$ is induced by $\Gamma$ simply means that it admits the following factorisation
\[\begin{tikzcd}
	{\Aut(\bA)} && {\Aut(\bA)\curvearrowright U/_\sim} \\
	\\
	&& {\Aut(\bB)}
	\arrow["{\psi_1}", from=1-1, to=1-3]
	\arrow["\phi"', from=1-1, to=3-3]
	\arrow["{\psi_2}", from=1-3, to=3-3]
\end{tikzcd}\]
    where $\psi_1$ is the canonical homomorphism, sending $\alpha \in \Aut(\bA)$ to its action on $U/_\sim$, and $\psi_2$ is the homomorphism of permutation groups induced by $\tilde\Gamma$. 
    
    The set $U/_\sim$ is clearly $\Aut(\bA)$-invariant, and it is moreover dense in $\bA^\eq$ by Lemma~\ref{lem:densityOfBiint}. Moreover, as $\psi_2$ is an isomorphism of permutation group actions (its surjectivity follows from the surjectivity of $\phi$), the action $\Aut(\bA)\curvearrowright U/_\sim$ is without algebraicity. An application of Lemma~\ref{lem:entanglementBijection} shows that $\psi_1$ is an isomorphism of permutation group actions. Thus $\phi$ is an isomorphism of permutation group actions as well.   
\end{proof}

\begin{corollary}[Nies and Paolini~\cite{PaoliniNies}]
    Fo-bi-interpretability between $\omega$-categorical structures without algebraicity is smooth.
\end{corollary}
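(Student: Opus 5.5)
The plan is to reduce fo-bi-interpretability on this class to fo-bi-definability, and then invoke Theorem~\ref{thm:biDefSmooth}.

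First I would check that on $\omega$-categorical structures without algebraicity the two relations coincide. By Theorem~\ref{thm:BiIntVsTopIso} (Coquand) and the remark preceding it:
\begin{itemize}
\item fo-bi-interpretability of $\omega$-categorical $\bA,\bB$ is equivalent to $\Aut(\bA)$ and $\Aut(\bB)$ being isomorphic as topological groups;
\item by Corollary~\ref{cor:iso_setinduced}, when both structures are without algebraicity, any such topological isomorphism is an isomorphism of permutation group actions;
\item by Lemma~\ref{lem:biDefAreActionIsos}, an isomorphism of the actions of the automorphism groups is exactly an fo-bi-definition.
\end{itemize}
Conversely, an fo-bi-definition $\theta$ yields the topological isomorphism $\alpha\mapsto\alpha^\theta$, hence an fo-bi-interpretation. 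So on this class the two equivalence relations are literally equal.

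Next I would restrict the Borel reduction from the proof of Theorem~\ref{thm:biDefSmooth}, namely $\bA\mapsto \Th(\bE_{\Aut(\bA)})$, to the subclass of $\omega$-categorical structures without algebraicity. The restriction of a Borel reduction to a subset is still a Borel reduction of the restricted equivalence relation, so the relation is smooth on this subclass.

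The one point needing care is that the subclass is a Borel (hence standard Borel) subset of the space of $\omega$-categorical structures, so that smoothness is meaningful there. I expect this to be routine, and arguably unnecessary, since a Borel reduction defined on a larger space restricts to any subset. If required, I would argue as follows. For $\omega$-categorical $\bA$, having no algebraicity is expressible by countably many conditions on the finitely many orbits of $(n+1)$-tuples for each $n$, namely that each non-trivial $1$-type over an $n$-tuple is non-algebraic. Each such condition is Borel in the code of $\bA$.
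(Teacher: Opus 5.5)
Your proposal is correct and follows essentially the same route as the paper. Coquand's theorem turns fo-bi-interpretability into topological isomorphism of the automorphism groups, Corollary~\ref{cor:iso_setinduced} upgrades this to an isomorphism of permutation group actions, Lemma~\ref{lem:biDefAreActionIsos} identifies that with fo-bi-definability, and Theorem~\ref{thm:biDefSmooth} then gives smoothness. Your additional remarks are correct details that the paper leaves implicit: restricting the Borel reduction $\bA\mapsto\Th(\bE_{\Aut(\bA)})$ to the subclass, and the Borelness of the no-algebraicity condition.
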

\begin{proof}
    Two $\omega$-categorical structures $\bA,\bB$ without algebraicity are bi-interpretable if and only if their automorphism groups are topologically isomorphic (Coquand; see~\cite{AhlbrandtZiegler}); this is the case if and only if the automorphism groups have isomorphic actions, by Corollary~\ref{cor:iso_setinduced}; this is the case if and only if $\bA$ and $\bB$ are fo-bi-definable (see Lemma~\ref{lem:biDefAreActionIsos}) and by Theorem~\ref{thm:biDefSmooth} this relation is smooth.
\end{proof}

\subsection{Extending to the endomorphism monoid}

In the following we show an analogue of Corollary~\ref{cor:iso_setinduced} for endomorphism monoids. We let $\bA,\bB$ be $\omega$-categorical structures and let $\Gamma$ be an ep-bi-interpretation of $\bB$ in $\bA$. Again, $U$ denotes the domain, and $\sim$ denotes the kernel of $\Gamma$. Recall that $U$ and $\sim$ are $\End(\bA)$-invariant so $\End(\bA)\curvearrowright U/_\sim$. 

\begin{lemma}\label{lem:EntanglementMonoids}
    Let $\bA$ be an $\omega$-categorical structure without algebraicity and let $U/_\sim \subseteq \bA^\eq$ be a dense set, such that $U$ and $\sim$ are $\End(\bA)$-invariant and moreover $\Aut(\bA)\curvearrowright U/_\sim$ is without algebraicity. Then the entanglement relation between $\Omega$ and $U/_\sim$ is preserved by all finite-to-one endomorphisms. 
\end{lemma}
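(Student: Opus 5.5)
The plan is to show directly that if $a\in\Omega$ entangles $b/_\sim\in U/_\sim$ and $e$ is a finite-to-one endomorphism of $\bA$, then $e(a)$ entangles $e(b)/_\sim$. The class $e(b)/_\sim$ is well defined because $U$ and $\sim$ are $\End(\bA)$-invariant: $e$ maps all of $b/_\sim$ into the single class $e(b)/_\sim$. By Lemma~\ref{lem:entanglementBijection}, entanglement is a bijection $\theta\colon\Omega\to U/_\sim$. So there is a unique $a'\in\Omega$ entangling $e(b)/_\sim$, and it suffices to prove $a'=e(a)$. Assume towards a contradiction that $a'\neq e(a)$.

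\emph{Step 1: the non-$a$ entries of $b$ move freely over $b/_\sim$.} Let $H:=\Aut(\bA)_{b/_\sim}$ be the setwise stabiliser of the class. By Lemma~\ref{lem:entanglement_establishing}(i), an element of $\Omega$ is algebraic over $b/_\sim$ if and only if it entangles $b/_\sim$. Since $\theta$ is a bijection, the only element entangling $b/_\sim$ is $a$. Fix a tuple $c\in b/_\sim$; it contains $a$, and let $A$ be the finite set of its remaining entries. Then every element of $A$ has an infinite $H$-orbit.

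\emph{Step 2: move $A$ off the finite preimage of $a'$.} Since $e$ is finite-to-one, $F:=e^{-1}(a')$ is finite, and $a\notin F$ because $e(a)\neq a'$. Apply Neumann's lemma to $H$: if every point of a finite set $A$ lies in an infinite $H$-orbit, then for every finite $F$ there is $\alpha\in H$ with $\alpha(A)\cap F=\emptyset$. Fix such an $\alpha$.

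\emph{Step 3: derive the contradiction.} The tuple $\alpha(c)$ lies in $b/_\sim$, since $\alpha$ stabilises the class. Hence $e(\alpha(c))\in e(b)/_\sim$, and because $a'$ entangles $e(b)/_\sim$, $a'$ is an entry of $e(\alpha(c))$. The entries of $\alpha(c)$ are $a$ (fixed by $\alpha$, as $\alpha$ fixes $\theta^{-1}(b/_\sim)=a$ by $\Aut(\bA)$-invariance of entanglement) together with elements of $\alpha(A)$. Now $e(a)\neq a'$, and $\alpha(A)\cap e^{-1}(a')=\emptyset$, so no entry of $\alpha(c)$ is mapped to $a'$. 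This contradicts the previous sentence, and therefore $e(a)=a'$.

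The main obstacle is Step 2. It needs exactly two inputs: that the non-$a$ entries of a representative are non-algebraic over the class, and that $e^{-1}(a')$ is finite. The first comes from Lemma~\ref{lem:entanglement_establishing}(i) combined with the bijectivity from Lemma~\ref{lem:entanglementBijection}. The second is the finite-to-one hypothesis, which is used only here. If $F$ were infinite, Neumann's lemma would not apply and the argument would break down.
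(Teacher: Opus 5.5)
Your proof is correct and is essentially the paper's argument. You assume a different element $a'$ entangles $e(b)/_\sim$, move the non-$a$ entries of a representative off the finite set $e^{-1}(a')$ by an automorphism that keeps the representative in its class, and reach a contradiction.

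The only difference is cosmetic. The paper picks $\alpha$ in $\Aut(\bA)_a$, using no algebraicity of $\bA$ directly, and then uses the bijection to see that $\alpha$ stabilises the class. You instead pick $\alpha$ in the stabiliser of the class via Lemma~\ref{lem:entanglement_establishing}(i) and deduce $\alpha(a)=a$. By $\Aut(\bA)$-equivariance of $\theta$ these two stabilisers coincide.
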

\begin{proof}
    By applying Lemma~\ref{lem:entanglementBijection} we obtain that entanglement is a bijection $\Omega\to U/_\sim$ that is moreover preserved by $\Aut(\bA)$.
    
    Let $e\in\End(\bA)$ be finite-to-one, and assume that $a\in \Omega$ entangles $u/_\sim \in U/_\sim$; we want to show that $e(a)$ entangles $e(u)/_\sim$.    
    Suppose not, then some other element $y \in \Omega$ entangles $e(u)/_\sim$. 
    Let $X$ be the finite pre-image of $y$ under $e$. As $\bA$ has no algebraicity there exists $\alpha\in \Aut(\bA)$ that fixes $a$ such that no element of $X$ appears in $\alpha(u)$. Since $a$ also entangles $\alpha(u)_\sim$, and entanglement is a bijection we have $\alpha(u) \in u/_\sim$. But now $e(\alpha(u)) \in e(u)/_\sim$ does not contain $y$, a contradiction.
\end{proof}

\begin{corollary}\label{cor:entPreservedByEndos}
    Let $\bA, U, {\sim}$ be as in Lemma~\ref{lem:EntanglementMonoids} and additionally assume that the subset of finite-to-one endomorphisms is dense in $\End(\bA)$. Then entanglement is preserved by $\End(\bA)$. 
\end{corollary}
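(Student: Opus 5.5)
We argue by approximating an arbitrary endomorphism by finite-to-one ones and transferring the entanglement relation through pointwise limits. Let $e\in\End(\bA)$, let $a\in\Omega$, and let $u/_\sim\in U/_\sim$ with $a$ entangling $u/_\sim$. By Lemma~\ref{lem:entanglementBijection} entanglement is a bijection $\theta\colon\Omega\to U/_\sim$, so it suffices to show that $e(a)$ entangles $e(u)/_\sim$, i.e.\ that $\theta(e(a))=e(u)/_\sim$. Note that $e(u)\in U$ and $e(u)/_\sim$ is well defined, since $U$ and $\sim$ are $\End(\bA)$-invariant.

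First, we compress the relevant information into a finite set. The statement that $a$ entangles $u/_\sim$ is a statement about the whole, possibly infinite, class $u/_\sim$, which is why we cannot simply pass to the limit. We therefore use the bijection $\theta$ instead. Let $v$ be a fixed tuple in $\theta(e(a))$; so $e(a)$ entangles $v/_\sim$. We want to show $e(u)\sim v$. Since $\sim$ is $\Aut(\bA)$-invariant and $\bA$ is $\omega$-categorical, $\sim$ is a first-order definable relation, and whether $e(u)\sim v$ holds is determined by the values of $e$ on the finite set $S$ of entries of $u$ together with $a$.

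Second, by density of the finite-to-one endomorphisms, we pick a finite-to-one $e'\in\End(\bA)$ agreeing with $e$ on $S$. By Lemma~\ref{lem:EntanglementMonoids}, $e'(a)=e(a)$ entangles $e'(u)/_\sim=e(u)/_\sim$. Since entanglement is a bijection, $e(u)/_\sim=\theta(e(a))=v/_\sim$. Hence $e(a)$ entangles $e(u)/_\sim$, as desired.

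The only delicate point is the remark that the target class $e(u)/_\sim$ depends only on the finite tuple $e(u)$. This holds by definition of the induced action $\End(\bA)\curvearrowright U/_\sim$, so we need no continuity argument on whole classes; thus this step is harmless once we phrase the goal through $\theta$ rather than through containment in all tuples of a class.
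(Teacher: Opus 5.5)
Your proof is correct and takes the intended route. The paper states this corollary without a separate proof, as an immediate consequence of Lemma~\ref{lem:EntanglementMonoids} together with density: whether $e$ preserves entanglement at a pair $(a,u/_\sim)$ depends only on $e$ restricted to $a$ and the entries of $u$.

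The detour through $\theta$, the representative $v$, and first-order definability of $\sim$ is superfluous. Once $e'$ agrees with $e$ on $S$, the conclusion of Lemma~\ref{lem:EntanglementMonoids} for $e'$ is literally the desired statement for $e$. For the same reason, your opening remark that one ``cannot simply pass to the limit'' is unfounded.
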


The proof of the following result is analogous to the proof of Corollary~\ref{cor:iso_setinduced} except for the fact that, unlike in the first-order case, it was previously not known whether a topological isomorphism between endomorphism monoids was always induced by an ep-bi-interpretation, see Theorem~\ref{thm:BiIntVsTopIso}. We will provide a positive answer under the assumption of no algebraicity in Section~\ref{sect:QuestionBodJun} to then obtain the following:

\begin{corollary}\label{cor:IsoIsSetInduced}
    Let $\bA$ be an $\omega$-categorical structure without algebraicity such that its finite-to-one endomorphisms are dense in $\End(\bA)$. Let $\bB$ be $\omega$-categorical without algebraicity. Then any topological isomorphism between $\End(\bA), \End(\bB)$ is an isomorphism of transformation monoid actions.
\end{corollary}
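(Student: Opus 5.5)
Given a topological isomorphism $\phi\colon\End(\bA)\to\End(\bB)$, I will first use the result promised for Section~\ref{sect:QuestionBodJun}: under no algebraicity, $\phi$ is induced by an ep-bi-interpretation $\Gamma$ of $\bB$ in $\bA$. Concretely, there is an ep-definable $U\subseteq\Omega^n$ and an ep-definable equivalence relation $\sim$ on $U$, both $\End(\bA)$-invariant, together with a bijection $\tilde\Gamma\colon U/_\sim\to\Omega'$. These satisfy $\phi(e)\circ\tilde\Gamma=\tilde\Gamma\circ e$ for all $e\in\End(\bA)$, where $e$ acts on $U/_\sim$. Hence $\phi$ factors as $\psi_2\circ\psi_1$. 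Here $\psi_1\colon\End(\bA)\to(\End(\bA)\curvearrowright U/_\sim)$ is the canonical map, and $\psi_2$ is conjugation by $\tilde\Gamma$, which is an isomorphism of transformation monoid actions. So it suffices to show that $\psi_1$ is induced by a bijection $\theta\colon\Omega\to U/_\sim$.

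The topological isomorphism $\phi$ restricts to a topological isomorphism of the unit groups $\Aut(\bA)\to\Aut(\bB)$, and $\Gamma$ is in particular a fo-bi-interpretation. Therefore Lemma~\ref{lem:densityOfBiint} shows that $U/_\sim$ is dense in $\bA^\eq$. Since $\psi_2$ is an isomorphism of actions and $\bB$ has no algebraicity, $\Aut(\bA)\curvearrowright U/_\sim$ is without algebraicity. The set $U/_\sim$ is also $\Aut(\bA)$-invariant. Lemma~\ref{lem:entanglementBijection} then gives the entanglement bijection $\theta\colon\Omega\to U/_\sim$ commuting with $\Aut(\bA)$.

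Next, Corollary~\ref{cor:entPreservedByEndos} applies, because the finite-to-one endomorphisms are dense in $\End(\bA)$ by hypothesis. It yields that entanglement is preserved by every $e\in\End(\bA)$: if $a$ entangles $u/_\sim$, then $e(a)$ entangles $e(u)/_\sim$. As entanglement is the graph of $\theta$, this reads $\theta\circ e=e\circ\theta$ on $\Omega$. So $\psi_1(e)=\theta\circ e\circ\theta^{-1}$, i.e.\ $\psi_1$ is conjugation by $\theta$. It is a bijection onto its image since $\phi$ is, so $\psi_1$ is an isomorphism of actions. Composing, $\phi$ is conjugation by $\tilde\Gamma\circ\theta\colon\Omega\to\Omega'$, as required.

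The main obstacle is the first step, namely that every topological isomorphism of endomorphism monoids of $\omega$-categorical structures without algebraicity is induced by an ep-bi-interpretation. This is precisely the Bodirsky--Junker question deferred to Section~\ref{sect:QuestionBodJun}. A minor point to check is whether $\bA,\bB$ can have constant endomorphisms, since that is the exclusion in Theorem~\ref{thm:BiIntVsTopIso}. No algebraicity of an infinite $\omega$-categorical structure should exclude this via the density of finite-to-one endomorphisms on the $\bA$ side, and via transfer through $\phi$ on the $\bB$ side. The remaining steps are direct applications of the lemmas above.
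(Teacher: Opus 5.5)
Your proposal is correct and follows the paper's proof. You invoke Proposition~\ref{prop:isomEndoEpBiInterpretNoAlg} to obtain an ep-bi-interpretation inducing $\phi$ and factor $\phi=\psi_2\circ\psi_1$. You then combine Lemma~\ref{lem:densityOfBiint}, Lemma~\ref{lem:entanglementBijection} and Corollary~\ref{cor:entPreservedByEndos} to show that $\psi_1$ is conjugation by the entanglement bijection, spelling out the density and no-algebraicity checks that the paper leaves implicit.

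Your closing remark is wrong, though it is not load-bearing. Density of finite-to-one endomorphisms does not rule out constant endomorphisms: a constant map agrees on each finite set with some finite-to-one map, and the pure countable set, for example, has all self-maps as endomorphisms. The paper instead shows in Proposition~\ref{prop:isomEndoEpBiInterpretNoAlg} that every topological isomorphism sends constants to constants. It uses that $\{\alpha\in\Aut(\bA)\mid \alpha\circ e=e\}=\Aut(\bA)_{e(\Omega)}$ is an intersection of two strictly larger stabilisers exactly when $e$ is non-constant, so Proposition~\ref{prop:isomEndoEpBiInterpret} applies.
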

\begin{proof}
    Proposition~\ref{prop:isomEndoEpBiInterpretNoAlg} shows that any topological isomorphism $\phi\colon\End(\bA)\to\End(\bB)$ is induced by an ep-bi-interpretation between $\bA$ and $\bB$. Using the same notation as in the proof of Corollary~\ref{cor:iso_setinduced} we see that $\phi$ is the composition of $\psi_1\colon \End(\bA) \to \End(\bA)\curvearrowright U/_\sim$ and $\psi_2\colon \End(\bA) \curvearrowright U/_\sim \to \End(\bB)$. Corollary~\ref{cor:entPreservedByEndos} shows that $\psi_1$ is an isomorphism of transformation monoid actions and $\psi_2$ automatically is. Thus $\phi$ is one as well.
\end{proof}

\subsection{On bi-interpretability}

In this section we use the assumption of no algebraicity to upgrade our tractability results of bi-definability to a tractability result of bi-interpretability. For this we additionally need a result due to Behrisch and Vargas-Garc\'{i}a. Recall that a structure is transitive if its automorphism group acts transitively on its domain.

\begin{theorem}[{\cite[Theorem 4.3]{BehrischGarcia}}]\label{thm:liftingSetInducedIso}
    Let $\bA$ be a transitive structure, and let $\phi\colon \Pol(\bA) \to \Pol(\bB)$ be a surjective clone homomorphism, whose restriction to the unary part, i.e., $\phi^{(1)}\colon\End(\bA) \to \End(\bB)$ is an isomorphism of transformation monoid actions. Then $\phi$ is an isomorphism of function clone actions.    
\end{theorem}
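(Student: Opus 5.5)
The plan is to show directly that $\phi$ is conjugation by the bijection $\theta\colon\Omega\to\Omega'$ that witnesses that $\phi^{(1)}$ is an isomorphism of transformation monoid actions. Concretely, for every $n\geq 1$, every $f\in\Pol(\bA)^{(n)}$ and every $a_1,\dots,a_n\in\Omega$ I aim to prove
\[
\phi(f)\bigl(\theta(a_1),\dots,\theta(a_n)\bigr)=\theta\bigl(f(a_1,\dots,a_n)\bigr).
\]
Since $\theta$ is a bijection, this says exactly that $\phi(f)=\theta\circ f\circ(\theta^{-1},\dots,\theta^{-1})$. Injectivity of $\phi$ then follows immediately: if $\phi(f)=\phi(g)$, then $\theta\circ f=\theta\circ g$, so $f=g$. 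Surjectivity is assumed. Hence $\phi$ is a bijection given by conjugation with $\theta$, i.e.\ an isomorphism of function clone actions.

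To prove the displayed identity, the idea is to reduce an $n$-ary operation to a unary one using transitivity. Fix a base point $c\in\Omega$. Because $\Aut(\bA)$ acts transitively on $\Omega$, I pick $\alpha_1,\dots,\alpha_n\in\Aut(\bA)$ with $\alpha_i(c)=a_i$. Then I define the unary operation
\[
u:=f\circ(\alpha_1,\dots,\alpha_n).
\]
It is a unary polymorphism, i.e.\ $u\in\End(\bA)$, and it satisfies $u(c)=f(a_1,\dots,a_n)$. Since $\phi$ is a clone homomorphism, it preserves composition, giving $\phi(u)=\phi(f)\circ(\phi(\alpha_1),\dots,\phi(\alpha_n))$. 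By hypothesis on the unary part, $\phi(u)=\theta u\theta^{-1}$ and $\phi(\alpha_i)=\theta\alpha_i\theta^{-1}$. Evaluating both sides at $\theta(c)$ gives
\[
\theta\bigl(f(a_1,\dots,a_n)\bigr)=\theta(u(c))=\phi(u)(\theta c)=\phi(f)\bigl(\theta\alpha_1(c),\dots,\theta\alpha_n(c)\bigr)=\phi(f)\bigl(\theta a_1,\dots,\theta a_n\bigr),
\]
which is the required identity.

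The one step that needs care is applying the clone-homomorphism property to the mixed-arity composition $f\circ(\alpha_1,\dots,\alpha_n)$ of an $n$-ary operation with unary ones. This is covered by the definition of a clone homomorphism (preservation of arities and of composition), so it is legitimate. Transitivity is used only to find the automorphisms $\alpha_i$ moving the single base point $c$ to arbitrary $a_i$. I expect this reduction to a unary operation to be the heart of the argument; everything else is bookkeeping.
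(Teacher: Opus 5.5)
Your proof is correct and complete. Composing $f$ with automorphisms $\alpha_i$ that send a base point $c$ to $a_i$ gives a unary polymorphism, on which $\phi$ is known to act by conjugation with $\theta$. Evaluating the composition identity at $\theta(c)$ then yields $\phi(f)=\theta\circ f\circ(\theta^{-1},\dots,\theta^{-1})$. Injectivity follows from this, and surjectivity is assumed.

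The paper does not prove this statement; it imports it from \cite[Theorem 4.3]{BehrischGarcia}. So your reduction to the unary part is a self-contained argument where the paper only gives a citation.

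Your argument also shows a little more than is stated. The identity $\phi(f)=\theta\circ f\circ(\theta^{-1},\dots,\theta^{-1})$ does not use surjectivity of $\phi$, which is needed only to conclude that $\phi$ maps onto $\Pol(\bB)$. Nor does it need automorphisms: any $g_i\in\End(\bA)$ with $g_i(c)=a_i$ works in place of $\alpha_i$. Hence it suffices that $\Omega=\{g(c)\mid g\in\End(\bA)\}$ for a single $c$, rather than full transitivity of $\bA$.
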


\DecOfInt*
\begin{proof} 
    Let $\bC', \bD'$ be the model-complete cores of $\bC,\bD$. The notions of fo-definability and ep-definability agree for model-complete cores, so they are fo-bi-interpretable if and only if they are ep-bi-interpretable. 
    
    The proof in the existential positive setting can easily be adapted from the proof in the primitive positive setting so we proceed to prove the latter.
    By Theorem~\ref{thm:biDefDec} we can decide if $\bC'$ and $\bD'$ are pp-bi-definable, so it suffices to argue that, in this setting, this is equivalent to them being pp-bi-interpretable. More concretely we need to show that the pp-bi-interpretability of $\bC'$ and $\bD'$ implies that they are pp-bi-definable. 
    To this end 
    let the topological clone isomorphism $\phi\colon \Pol(\bC') \to \Pol(\bD')$ be a witness of the pp-bi-interpretability of 
    $\bC'$ and $\bD'$. Its restriction to unaries $\phi^{(1)}\colon \End(\bC') \to \End(\bD')$ is an isomorphism of transformation monoid actions by Corollary~\ref{cor:IsoIsSetInduced}; this use the fact that for model-complete cores the automorphisms are dense in the monoid of endomorphisms. Observe further that $\bC'$ is transitive: clearly $\bC$ is, being a first-order reduct of a transitive structure, and it is not hard to see that transitivity translates to the model-complete core as well. Thus, Theorem~\ref{thm:liftingSetInducedIso} in combination with Lemma~\ref{lem:biDefAreActionIsos} provides the desired conclusion.
\end{proof}

\IntIsSmooth*
\begin{proof}
    Smoothness of ep-bi-interpretability follows immediately from Theorem~\ref{thm:biDefSmooth}, Corollary~\ref{cor:IsoIsSetInduced}, and Lemma~\ref{lem:biDefAreActionIsos}. The smoothness of pp-bi-interpretability follows from Theorem~\ref{thm:biDefSmooth}, Corollary~\ref{cor:IsoIsSetInduced},  Theorem~\ref{thm:liftingSetInducedIso}, and Lemma~\ref{lem:biDefAreActionIsos}, similarly to the proof of Theorem~\ref{thm:dec_of_int}.
\end{proof}

\section{A question of Bodirsky and Junker}\label{sect:QuestionBodJun}

\begin{theorem}[\cite{BodirskyJunker}]\label{thm:BodJun}
    Two $\omega$-categorical structures without constant endomorphisms are ep-bi-interpretable if and only if their endomorphism monoids are topologically isomorphic.
\end{theorem}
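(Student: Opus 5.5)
The forward direction is the easy one. Let $\Gamma,\Delta$ be ep-bi-interpretations with the graphs of $\Delta\circ\Gamma$ and $\Gamma\circ\Delta$ ep-definable. Let $U\subseteq\Omega^n$ be the domain of $\Gamma$ and $\sim$ its kernel. Both are ep-definable in $\bA$, hence $\End(\bA)$-invariant, so every $e\in\End(\bA)$ acts on $U/_\sim$. Transporting this action along $\tilde\Gamma\colon U/_\sim\to\Omega'$ gives an endomorphism of $\bB$, since preimages of atomic $\bB$-relations are ep-definable and so preserved. This defines a continuous monoid homomorphism $\phi\colon\End(\bA)\to\End(\bB)$, and $\Delta$ yields one in the other direction.

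To see that these are mutually inverse, I would use that the graph of $\Delta\circ\Gamma$ is ep-definable, so every endomorphism commutes with it. Hence $e$ and the double-transported map agree modulo the bi-interpretation. Because $\Delta\circ\Gamma$ is a function whose graph is preserved, the double-transported endomorphism must equal $e$ pointwise. This part is essentially the argument of~\cite{Topo-Birk} restricted to unary operations; it does not even need the hypothesis on constant endomorphisms.

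For the converse I would follow the strategy of~\cite{Topo-Birk}, adapted to monoids. Given a topological isomorphism $\phi\colon\End(\bA)\to\End(\bB)$, its restriction to $\Aut(\bA)$ lands in the units, so it induces a topological isomorphism $\Aut(\bA)\to\Aut(\bB)$, provided units go to units (they do, since isomorphisms preserve invertibility). By Coquand's theorem (see~\cite{AhlbrandtZiegler}), this group isomorphism is induced by a fo-bi-interpretation $\Gamma$ with domain $U$ and kernel $\sim$. The remaining task is to upgrade $\Gamma$ to an ep-bi-interpretation. For this I would show that $\phi$ is compatible with $\Gamma$ on all of $\End(\bA)$, in the sense that
\[
\tilde\Gamma(e\,u/_\sim)=\phi(e)\,\tilde\Gamma(u/_\sim).
\]
Once this holds, $U$ and $\sim$ are preserved by all endomorphisms (after replacing $U$ by a suitable ep-definable enlargement), and so are the preimages of $\bB$-relations. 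By the ep-version of Engeler--Ryll-Nardzewski these sets are then ep-definable, and the same reasoning applies to the composition graphs.

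The main obstacle is proving this compatibility for non-invertible $e$. My plan is to fix a finite tuple $u$ and use continuity of $\phi$ in both directions. An open neighbourhood of $\phi(e)$ determined by its values on $\tilde\Gamma(u/_\sim)$ pulls back to an open neighbourhood of $e$ determined on a finite set. Comparing the resulting $\Aut$-orbits shows that the value of $\phi(e)$ at $\tilde\Gamma(u/_\sim)$ depends only on the orbit of the pair consisting of $u$ and $e(u)$. One then checks that this dependence is exactly the one dictated by $\tilde\Gamma$, by composing with automorphisms on both sides and using that $\phi$ is the identity-compatible map on $\Aut$.

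The hypothesis that there are no constant endomorphisms should enter here. It ensures that the image of $e$ on a tuple in $U$ stays inside $U$ and is not collapsed into a single class, so that $e\,u/_\sim$ is defined. Without it, a constant endomorphism could map $\bA$ onto a point that $\phi$ sends somewhere incompatible. I expect this step, the extension from units to the whole monoid, to carry the real difficulty.
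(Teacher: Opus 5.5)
Your forward direction is fine, and you are right that it needs no hypothesis on constants. The converse has a genuine gap. The step you postpone, extending compatibility from $\Aut(\bA)$ to $\End(\bA)$, is the whole content of the theorem, and the route you propose for it fails.

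Coquand's theorem only yields \emph{some} fo-bi-interpretation $\Gamma$ inducing $\phi|_{\Aut(\bA)}$. Such $\Gamma$ is not unique, and different choices induce different maps on $\End(\bA)$. So the compatibility $\tilde\Gamma(e\,u/_\sim)=\phi(e)\,\tilde\Gamma(u/_\sim)$ can simply be false for the one you get. Here is an example:
\begin{itemize}
\item Let $\sigma$ be a fixed-point-free involution of $\Omega$, and let $\bA=(\Omega;N)$ with $N=\{(x,y)\mid x\neq y,\ y\neq\sigma(x)\}$. This structure is $\omega$-categorical and has no constant endomorphisms.
\item The matching $y=\sigma(x)$ is fo-definable as $x\neq y\wedge\neg N(x,y)$. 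Hence $\sigma\in\Aut(\bA)$ commutes with every automorphism.
\item Therefore $\Gamma:=\sigma$ is a bi-interpretation of $\bA$ with itself that induces $\phi|_{\Aut(\bA)}$ for $\phi=\mathrm{id}_{\End(\bA)}$.
\item The map $e$ that is the identity except for $e(\sigma(a))=a$ is an endomorphism. For it, $\Gamma(e(a))=\sigma(a)\neq a=\phi(e)(\Gamma(a))$.
\end{itemize}
So this $\Gamma$ induces $e\mapsto\sigma\circ e\circ\sigma$ on $\End(\bA)$, not $\phi$. Your continuity argument cannot exclude this. Continuity of $\phi$ only says that $\phi(e)(b)$ is determined by $e$ on some finite set $F_e$ depending on $e$. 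Your compatibility needs it to be determined by $e$ on the entries of the fixed tuple $u$ chosen for the group. The interpretation has to be constructed from $\phi$ on the whole monoid, not inherited from the group.

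You also misplace the role of the hypothesis. The domain $U$ and kernel $\sim$ coming from the group are only $\Aut(\bA)$-invariant, and non-invertible endomorphisms routinely push tuples of $U$ into other orbits. The absence of constant endomorphisms does not prevent this: the random graph has no constant endomorphisms but has endomorphisms identifying non-adjacent vertices. Extending $\Gamma$ to an $\End(\bA)$-invariant enlargement needs two things:
\begin{itemize}
\item the well-definedness condition $e(u)=f(v)\Rightarrow\phi(e)\tilde\Gamma(u/_\sim)=\phi(f)\tilde\Gamma(v/_\sim)$, which is exactly what fails above;
\item a way to glue the parts belonging to the different $\Aut(\bB)$-orbits of $\Omega'$, whose $\End(\bA)$-images may overlap, into one ep-definable domain with an ep-definable kernel.
\end{itemize}

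The paper only cites this theorem. It recovers it as the constant-free case of Proposition~\ref{prop:isomEndoEpBiInterpret}, by a different route:
\begin{itemize}
\item By \cite[Lemma 2.9]{MarimonPinskerReconstruction}, ep-bi-interpretability is equivalent to topological isomorphism of the closed clones generated by $\End(\bA)$ and $\End(\bB)$.
\item By \cite[Proposition 5]{BPPReconstructingClone}, a topological monoid isomorphism extends to these clones as soon as $\phi$ and $\phi^{-1}$ preserve constant maps.
\end{itemize}
That is where the hypothesis really enters. With $\pi_i$ the $i$-th projection, the extension $e\circ\pi_i\mapsto\phi(e)\circ\pi_i$ is well defined because $e\circ\pi_i=e'\circ\pi_j$ with $i\neq j$ forces $e=e'$ to be constant. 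This cannot occur when there are no constant endomorphisms.
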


While the authors of the above theorem show that any ep-bi-interpretation gives rise to a topological isomorphism of endomorphism monoids they point to some restriction regarding the converse implication. Namely, some restriction regarding constant endomorphisms is necessary as any structure interpretable in a structure having a constant endomorphism will have a constant endomorphism itself. Further, one easily constructs two $\omega$-categorical structures with topologically isomorphic endomorphism monoids such that only one of them  has a constant endomorphism. It is left open, however, if such a bi-interpretability result holds if both structures have a constant endomorphism. 
We observe a positive answer.

\begin{proposition}\label{prop:isomEndoEpBiInterpret}
    Let $\bA,\bB$ be $\omega$-categorical structures such that either both or neither have a constant endomorphism. Then $\bA$ and $\bB$ are existential positive bi-interpretable if and only if $\End(\bA)$, and $\End(\bB)$ are topologically isomorphic.
\end{proposition}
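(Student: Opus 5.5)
The forward direction (ep-bi-interpretations induce topological isomorphisms of endomorphism monoids) holds for all $\omega$-categorical structures, as recalled before Theorem~\ref{thm:BiIntVsTopIso}. If neither structure has a constant endomorphism, the converse is Theorem~\ref{thm:BodJun}. So the only new case is when both $\bA$ and $\bB$ have a constant endomorphism and $\phi\colon\End(\bA)\to\End(\bB)$ is a topological isomorphism. My plan is to modify $\bA$ and $\bB$ slightly, reduce to the constant-free case, and then undo the modification.

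\emph{Step 1: identify the constants intrinsically.} The constant maps in a transformation monoid $M$ are exactly the left zeros: $c\circ e=c$ for all $e\in M$. This property is preserved by monoid isomorphisms, so $\phi$ maps the constants of $\End(\bA)$ bijectively onto those of $\End(\bB)$. Fix a constant endomorphism $c_a$ of $\bA$ with value $a$. Since $\bA$ is $\omega$-categorical, the set of values of constant endomorphisms is the unary relation ep-defined by the orbit-closed condition of being preserved by all endomorphisms appropriately; more precisely, $\{a : c_a\in\End(\bA)\}$ is invariant under $\End(\bA)$, because $e\circ c_a=c_{e(a)}$. Hence it is ep-definable.

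\emph{Step 2: kill the constants by a definable modification.} Let $\bA^*$ be the expansion of $\bA$ by the relation $\neq$, or alternatively by the complement of the set of constant values. The aim is to obtain a structure without constant endomorphisms whose endomorphism monoid is the submonoid $\End(\bA)\setminus\{\text{constants}\}$, or a topologically intrinsic piece of it. My first attempt would be to work directly with the monoid. Let $N_\bA$ be the set of non-constant endomorphisms, and show that $N_\bA\cup\{\mathrm{id}\}$, or more robustly its closure, is $\End$ of an $\omega$-categorical structure without constants. The closure is taken so that it is again a closed monoid containing $\Aut(\bA)$, hence of the form $\End(\bA^*)$ with $\bA^*$ $\omega$-categorical. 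Since $\phi$ preserves constants and is a homeomorphism, it maps this closure onto the corresponding closure for $\bB$. Theorem~\ref{thm:BodJun} then gives an ep-bi-interpretation $\Gamma$ of $\bA^*$ and $\bB^*$.

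\emph{Step 3: transfer back.} I then check that $\Gamma$ is already an ep-bi-interpretation between $\bA$ and $\bB$. The relations of $\bA^*$ are $\End(\bA^*)$-invariant. The domain, kernel and graph relations of $\Gamma$ must be shown to be invariant under the constants as well. This should hold because applying a constant map coordinatewise to a tuple of the domain gives a constant tuple, and by the compatibility of $\Gamma$ with $\phi$ on constants, the image of that tuple is the point $\phi(c_a)$, which lies in the appropriate relation. The corresponding pp/ep-definitions then hold in $\bA$.

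\textbf{Main obstacle.} I expect the main difficulty to be making Step 2 rigorous: the closure of the non-constant endomorphisms might contain constants again, since constants can be limits of non-constant maps. If that happens, I would instead follow the Bodirsky–Junker proof directly. That proof builds the interpretation from the action of $\End(\bA)$ on the orbits of a suitable tuple. I would pick the tuple so that it is not in the image of a constant, for example a tuple with two distinct entries, and verify that their argument only uses constant-freeness to guarantee that the chosen tuple is not collapsed. That condition can be arranged once constants are known to correspond under $\phi$ by Step 1.
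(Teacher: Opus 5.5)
\paragraph{Verdict.} There is a genuine gap: your Step~1 is the key observation the paper uses, but nothing in the proposal turns it into an ep-bi-interpretation.

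\paragraph{What Step~1 gets right.} Since both monoids contain a constant, the constants are exactly the $e$ with $e\circ f=e$ for all $f$. Indeed, for a constant $c$, such an $e$ satisfies $e=e\circ c$. Hence $\phi$ and $\phi^{-1}$ preserve constants.

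\paragraph{Why Step~2 fails.} It fails for two independent reasons: the non-constant endomorphisms need not be closed under composition, and their closure generally contains constants. Take $(\mathbb{Q},\le)$. The monotone map sending $x\le 0$ to $0$ and $x>0$ to $1$, followed by the monotone map that is $0$ on $(-\infty,1]$ and $x\mapsto x-1$ beyond, gives a constant. Also, $c_0$ is the pointwise limit of the non-constant monotone maps that are $0$ on $[-n,n]$ and translations outside. So in general there is no constant-free $\bA^*$ with $\End(\bA)=\End(\bA^*)\cup\{\text{constants}\}$, and Step~3 has nothing to transfer.

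\paragraph{Why the fallback does not help.} Saying that the Bodirsky--Junker argument only uses constant-freeness to avoid collapsing a tuple is an unverified hope, not an argument. If $c_a\in\End(\bA)$, every nonempty ep-definable relation contains $(a,\dots,a)$. So constant tuples unavoidably lie in the domain of any ep-interpretation. The whole difficulty is mapping them consistently with $\phi$, and choosing a tuple with two distinct entries does not address this.

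\paragraph{The missing idea.} You need to pass from monoids to clones. By \cite[Lemma 2.9]{MarimonPinskerReconstruction}, $\bA$ and $\bB$ are ep-bi-interpretable if and only if the topologically closed clones generated by $\End(\bA)$ and $\End(\bB)$ are topologically isomorphic. This is essentially Topological Birkhoff applied to the expansions by all ep-definable relations.
\begin{itemize}
\item These clones consist of the essentially unary operations $(x_1,\dots,x_n)\mapsto e(x_i)$ with $e$ an endomorphism.
\item The natural extension $e(x_i)\mapsto\phi(e)(x_i)$ is well defined exactly when the coincidences $e(x_i)=e'(x_j)$ with $i\neq j$ are respected. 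Such coincidences occur precisely when $e=e'$ is constant, so the condition is that $\phi$ and $\phi^{-1}$ preserve constants.
\item \cite[Proposition 5]{BPPReconstructingClone} makes this extension precise, including the topology.
\end{itemize}
With your Step~1 supplying that hypothesis, the converse direction is immediate, and Steps~2 and~3 can be dropped.
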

\begin{proof}
    In~\cite{BodirskyJunker} it is shown that ep-bi-inter\-pretations of $\omega$-cate\-gorical structures induce isomorphisms on the level of endomorphism monoids.

    Conversely, assume that $\End(\bA)$, and $\End(\bB)$ are topologically isomorphic and that $\bA$ has a constant endomorphism precisely if $\bB$ has one. By~\cite[Lemma 2.9]{MarimonPinskerReconstruction} $\bA$ and $\bB$ are existential positive bi-interpretable if and only if the topologically closed clones generated by $\End(\bA)$ and $\End(\bB)$ are topologically isomorphic. To see that this is the case let $\phi\colon \End(\bA) \to \End(\bB)$ be a topological isomorphism. It extends to a topological isomorphism of the generated clones if $\phi$ and $\phi^{-1}$ preserve constant functions, see~\cite[Proposition 5]{BPPReconstructingClone}. In case neither $\bA$ nor $\bB$ have constant endomorphisms this condition is vacuously true. But $\phi$ also provides a correspondence between the constant functions in case both, $\bA$ and $\bB$ have constant endomorphisms: in this case $e \in \End(\bA)$ is constant if and only if for all $f\in \End(\bA)$ it holds $e = e \circ f$; clearly this is true for constant endomorphisms and conversely, if $e$ has said property, then we may take a constant endomorphism $c\in \End(\bA)$ to see that $e = e \circ c$ is constant. The same being true for $\End(\bB)$ we conclude that $\phi$ extends to a topological isomorphism between the generated clones.
\end{proof}

We now further observe that in the case of no algebraicity, we can additionally drop the assumption on constant endomorphisms.

\begin{proposition}\label{prop:isomEndoEpBiInterpretNoAlg}
    Let $\bA,\bB$ be $\omega$-categorical and without algebraicity. Then any topological isomorphism between $\End(\bA),\End(\bB)$ maps constants to constants. Consequently, $\End(\bA)$ and $\End(\bB)$ are topologically isomorphic if and only if $\bA,\bB$ are ep-bi-interpretable. 
\end{proposition}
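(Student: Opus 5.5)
The plan is to reduce the statement to Proposition~\ref{prop:isomEndoEpBiInterpret}. Once we know that every topological isomorphism $\phi\colon\End(\bA)\to\End(\bB)$ maps constants to constants, the following holds: if $\bA$ has a constant endomorphism $c$, then $\phi(c)$ is a constant endomorphism of $\bB$, and symmetrically via $\phi^{-1}$. Hence either both or neither structure has a constant endomorphism, and Proposition~\ref{prop:isomEndoEpBiInterpret} yields the equivalence with ep-bi-interpretability. So the work is in the first claim.

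\textbf{Step 1: exclude the mixed case.} In the argument in the proof of Proposition~\ref{prop:isomEndoEpBiInterpret}, when $\End(\bA)$ contains a constant, the constants are exactly the $e$ with $e\circ f=e$ for all $f$, i.e.\ the left zeros. This property is algebraic and so is transported by $\phi$. Thus if both monoids contain constants, $\phi$ maps constants (left zeros) bijectively onto constants. If neither contains constants, the claim is vacuous. The only thing to rule out is that $\End(\bA)$ has a constant $c$ while $\End(\bB)$ has none. In that case $d:=\phi(c)$ is a nonconstant left zero of $\End(\bB)$: we have $d\circ f=d$ for all $f\in\End(\bB)$, in particular for all $f\in\Aut(\bB)$.

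\textbf{Step 2: a left zero must be constant without algebraicity.} Suppose $d$ satisfies $d\circ\alpha=d$ for all $\alpha\in\Aut(\bB)$, and $d$ is nonconstant, say $d(x)\neq d(y)$. If $\Aut(\bB)$ has several orbits on the domain, I would first use no algebraicity to get the needed homogeneity. Note that the orbit of $x$ under $\Aut(\bB)$ is infinite, and $d$ is constant on it, since $d(\alpha x)=d(x)$. The plan is to show that $x$ and $y$ can be connected by a chain of automorphism moves combined with the endomorphism structure. Concretely, pick an element $z\notin\{x,y\}$. I expect to use that $d$ is constant on $\Aut(\bB)$-orbits of tuples as well: $d$ applied coordinatewise satisfies $d(\alpha\bar t)=d(\bar t)$. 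Suppose the pair $(x,y)$ and the pair $(x,y')$, with $y'$ in the orbit of $y$ over $x$, could be chosen so that some $\alpha$ swaps the roles. This alone does not force constancy if orbits differ, so instead I would compose: $d\circ f=d$ for all $f\in\End(\bB)$. Since $\End(\bB)$ is closed under left multiplication by $d$, we get $d\circ d=d$, so $d$ is a retraction onto its image $I$. Its image is $\Aut(\bB)$-invariant in the sense that $\beta\circ d\in\End(\bB)$ and $d\circ\beta\circ d=d$.

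The cleaner route is the following. For any $\beta\in\Aut(\bB)$ we have $d\circ\beta=d$, so $d(\beta(x))=d(x)$. Now $d(x)$ is a single point $p$, and by the same identity applied to $\beta\circ d$ we get $d\circ\beta\circ d=d$, hence $d(\beta(p))=p$ for all $\beta$. By no algebraicity, the orbit of $p$ is either $\{p\}$ (so $p$ is fixed by all automorphisms, i.e.\ algebraic over the empty tuple, and then $p$ lies in the empty tuple, which is impossible) or infinite. So the orbit of $p$ is infinite, and $d$ collapses it to $p$. The main obstacle is extracting full constancy of $d$ on all orbits from this. For that I would use the topology: $\phi^{-1}$ is continuous and $\phi^{-1}(d)=c$. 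Alternatively, I would argue via the element structure that $\{e: e\circ d = d\}$ is large. I would try first the fact that $d$ is a left zero in an $\omega$-categorical structure without algebraicity whose image is finite. The image is finite because each orbit is collapsed and there are finitely many orbits. Each image point $q$ then satisfies $\beta(q)\in\{\text{image}\}$ for all $\beta\in\Aut(\bB)$, since $\beta\circ d$ is an endomorphism with $d\circ\beta\circ d=d$. Combined with the image being a retract, this yields that $q$ lies in a finite invariant set, contradicting no algebraicity unless the image is empty. The image cannot be empty, so no such $d$ exists. This finite-invariant-set contradiction is the crux, and I expect some care to be needed in showing that the image of $d$ is $\Aut(\bB)$-invariant.
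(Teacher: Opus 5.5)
\textbf{There is a genuine gap, in Step~2 (the mixed case).} Your reduction to Proposition~\ref{prop:isomEndoEpBiInterpret} is fine, and so is your treatment of the cases where both or neither monoid contains a constant. But the mixed case is the entire content of the first claim, and there you try to prove that in an $\omega$-categorical structure without algebraicity every left zero $d$ of $\End(\bB)$ is constant. That is false.

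Let $\bB=(\Omega;P_1,P_2)$, where $P_1,P_2$ are infinite and partition $\Omega$. This structure is $\omega$-categorical, without algebraicity, and has no constant endomorphism. Let $d$ send all of $P_1$ to some $p_1\in P_1$ and all of $P_2$ to some $p_2\in P_2$. Then $d$ is an endomorphism with $d\circ f=d$ for all $f\in\End(\bB)$, since every endomorphism maps $P_i$ into $P_i$; yet $d$ is not constant. The step that fails is your claim that the image of $d$ is $\Aut(\bB)$-invariant. Here the image is $\{p_1,p_2\}$, and by no algebraicity a finite nonempty image can never be invariant. All that $d\circ\alpha=d$ for $\alpha\in\Aut(\bB)$ gives is that $d$ is constant on each $\Aut(\bB)$-orbit. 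That yields constancy only when $\bB$ is transitive, and the proposition makes no transitivity assumption.

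\textbf{Missing ingredient.} The mixed case cannot be excluded inside $\End(\bB)$ alone; you must use that $\phi(c)$ comes from a constant of $\End(\bA)$. What you need is Corollary~\ref{cor:iso_setinduced}: since both structures are without algebraicity, the restriction of $\phi$ to the invertibles, $\Aut(\bA)\to\Aut(\bB)$, is induced by a bijection $\theta$.

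The paper then composes with automorphisms on the \emph{left}, not the right. For $e\in\End(\bA)$, the set $L(e):=\{\alpha\in\Aut(\bA)\mid \alpha\circ e=e\}$ is the pointwise stabiliser of $e(\Omega)$, and $\phi(L(e))=L(\phi(e))$. If $e$ is constant with value $p$, then $L(e)=\Aut(\bA)_p$. Hence the pointwise stabiliser of the image of $\phi(e)$ is $\Aut(\bB)_{\theta(p)}$. Every point of that image is therefore fixed by $\Aut(\bB)_{\theta(p)}$, so it equals $\theta(p)$ by no algebraicity, and $\phi(e)$ is constant.

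The paper phrases this as a point stabiliser not being an intersection of two strictly larger stabilisers, but the content is the same. In the example above, $L(d)$ is the stabiliser of two points rather than one. This is exactly what distinguishes $d$ from a constant, and the purely monoid-theoretic left-zero property cannot detect it.
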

\begin{proof}
    The restriction of any topological isomorphism $\phi$ between $\End(\bA)$ and $\End(\bB)$ to the sets of invertibles $G:=\Aut(\bA)$ and $H:= \Aut(\bB)$ is an isomorphism of permutation group actions by Corollary~\ref{cor:iso_setinduced}. For $e$ in $\End(\bA)$, set $L(e):=\{\alpha \in G \, \mid \, \alpha \circ e =e \}$. We have $\phi(L(e))=L(\phi(e))$. Observe that $L(e)=G_{e(\Omega)}$. If $e$ is constant, then the stabiliser $G_{e(\Omega)}$ is not the intersection of two strictly larger stabilisers; if $e$ is not constant, then it is, by no algebraicity. But $\phi$ restricts to an isomorphism of permutation group actions $G\to H$, so it has to preserve this property. Hence it must send constants to constants.

    The second statement now follows from Proposition~\ref{prop:isomEndoEpBiInterpret}. 
\end{proof}

\bibliographystyle{plain}
\bibliography{references}

@article{Birkhoff,
author={Garrett Birkhoff},
title={On the structure of abstract algebras},
journal={Mathematical Proceedings of the Cambridge Philosophical Society},
volume=31,
number=4,
year=1935,
pages={433--454}
}

@INPROCEEDINGS{Pinsker-sheep,
author = {Pinsker, Michael},
booktitle = {2022 IEEE 52nd International Symposium on Multiple-Valued Logic (ISMVL)},
title = {Current Challenges in Infinite-Domain Constraint Satisfaction: Dilemmas of the Infinite Sheep},
year = {2022},
pages = {80-87},
doi = {10.1109/ISMVL52857.2022.00019},
publisher = {IEEE Computer Society},
address = {Los Alamitos, CA, USA},
month = May}

@article{GMSNP,
author = {Bienvenu, Meghyn and Cate, Balder Ten and Lutz, Carsten and Wolter, Frank},
title = {Ontology-Based Data Access: A Study through Disjunctive Datalog, CSP, and MMSNP},
year = {2015},
issue_date = {December 2014},
publisher = {Association for Computing Machinery},
address = {New York, NY, USA},
volume = {39},
number = {4},
issn = {0362-5915},
url = {https://doi.org/10.1145/2661643},
doi = {10.1145/2661643},
journal = {ACM Trans. Database Syst.},
articleno = {33},
numpages = {44}
}

@inproceedings{MMSNPFederVardi,
author = {Feder, Tom\'{a}s and Vardi, Moshe Y.},
title = {Monotone monadic SNP and constraint satisfaction},
year = {1993},
publisher = {Association for Computing Machinery},
address = {New York, NY, USA},
doi = {10.1145/167088.167245},
booktitle = {Proceedings of the Twenty-Fifth Annual ACM Symposium on Theory of Computing},
pages = {612–622},
numpages = {11},
location = {San Diego, California, USA},
series = {STOC '93}
}

@book{Kechris_2024, 
place={Cambridge}, 
series={Cambridge Tracts in Mathematics}, 
title={The Theory of Countable Borel Equivalence Relations}, 
publisher={Cambridge University Press}, 
author={Kechris, Alexander S.}, 
year={2024}, 
collection={Cambridge Tracts in Mathematics}
}

@article{EquationsInOliog,
author = {Barto, Libor and Kompatscher, Michael and Ol\v{s}\'{a}k, Miroslav and Trung, Van Pham and Pinsker, Michael},
title = {Equations in oligomorphic clones and the constraint satisfaction problem for $\omega$-categorical structures},
journal = {Journal of Mathematical Logic},
volume = {19},
number = {02},
pages = {1950010},
year = {2019},
doi = {10.1142/S0219061319500107},
}

@article{C*Algebras,
    author = {Sabok, Marcin},
    title = {Completeness of the isomorphism problem for separable C*-algebras},
    journal = {Inventiones mathematicae},
    year = {2016},
    volume = {204},
    number = {3},
    pages = {833-868},
    doi = {10.1007/s00222-015-0625-5}
}

@article{NiesSchlichtTent,
author = {Nies, Andr\'{e} and Schlicht, Philipp and Tent, Katrin},
title = {Coarse groups, and the isomorphism problem for oligomorphic groups},
journal = {Journal of Mathematical Logic},
volume = {22},
number = {01},
pages = {2150029},
year = {2022},
doi = {10.1142/S021906132150029X},
}

@article{WEISmooth,
author = {Paolini, Gianluca},
title = {The isomorphism problem for oligomorphic groups with weak elimination of imaginaries},
journal = {Bulletin of the London Mathematical Society},
volume = {56},
number = {8},
pages = {2597-2614},
doi = {https://doi.org/10.1112/blms.13086},
year = {2024}
}

@inbook{RamseyClasses, 
place={Cambridge}, 
series={London Mathematical Society Lecture Note Series}, 
title={Ramsey classes: examples and constructions}, 
booktitle={Surveys in Combinatorics 2015}, 
publisher={Cambridge University Press}, 
author={Bodirsky, Manuel}, 
year={2015}, 
pages={1–48}, 
collection={London Mathematical Society Lecture Note Series}
}

@inproceedings{MMSNP_conf,
    author = {Bodirsky, Manuel and Madelaine, Florent and Mottet, Antoine},
    title = {{A universal-algebraic proof of the complexity dichotomy for Monotone Monadic SNP}},
    pages     = {105-114},
    booktitle = {Proceedings of the 33rd Annual {ACM/IEEE} Symposium on Logic in Computer Science - {LICS}'18},
    year = {2018},
    doi = {10.1145/3209108.3209156},
    publisher = {ACM},
    xxxnote = {Preprint arXiv:1802.03255}
}

@article{MMSNP-Journal,
  author    = {Manuel Bodirsky and
               Florent R. Madelaine and
               Antoine Mottet},
  title     = {A Proof of the Algebraic Tractability Conjecture for Monotone Monadic
               {SNP}},
  longjournal = {{SIAM} Journal on Computing},
  journal = {{SIAM J. Comput.}},
  volume    = {50},
  number    = {4},
  pages     = {1359--1409},
  year      = {2021},
  doi       = {10.1137/19M128466X}
}

@article{ecsps,
  title = {The complexity of Equality Constraint Languages},
  author = {Bodirsky, Manuel and K{\'a}ra, Jan},
  longjournal = {Theory of Computing Systems},
  journal = {Theory Comput. Syst.},
  volume = {43},
  pages = {136--158},
  year = {2008},
  publisher = {Springer},
  doi = {10.1007/s00224-007-9083-9},
  note = {A conference version appeared in the proceedings of Computer Science Russia - {CSR}'06},
}

@article{BodirksyAI,
author = {Bodirsky, Manuel and Jonsson, Peter},
year = {2017},
pages = {339-385},
title = {A Model-Theoretic View on Qualitative Constraint Reasoning},
volume = {58},
journal = {Journal of Artificial Intelligence Research},
doi = {10.1613/jair.5260}
}

@article{TorsionFreeGroups,
    author = {Gianluca Paolini and Saharon Shelah},
    title = {Torsion-free abelian groups are Borel complete},
    journal = {Annals of Mathematics},
    year = {2024},
    volume = {199},
    issue = {3},
    pages = {1177--1224},
    doi = {https://doi.org/10.4007/annals.2024.199.3.4}
}

@article{ErgodicClassifiaction,
    author = {Matthew Foreman and Benjamin Weiss},
    title = {An anti-classification theorem for ergodic measure preserving transformations},
    journal = {J. Eur. Math. Soc.},
    year = {2004},
    volume = {6},
    number ={3},
    pages = {277--292}
}

@article{Silver,
title = {Counting the number of equivalence classes of Borel and coanalytic equivalence relations},
journal = {Annals of Mathematical Logic},
volume = {18},
number = {1},
pages = {1-28},
year = {1980},
issn = {0003-4843},
doi = {https://doi.org/10.1016/0003-4843(80)90002-9},
author = {Jack H. Silver}
}

@article{HKLBorel,
 ISSN = {08940347, 10886834},
 author = {Leo A. Harrington and Alexander Sotirios Kechris and Alain Louveau},
 journal = {Journal of the American Mathematical Society},
 number = {4},
 pages = {903--928},
 publisher = {American Mathematical Society},
 title = {A Glimm-Effros Dichotomy for Borel Equivalence Relations},
 urldate = {2026-01-14},
 volume = {3},
 year = {1990}
}

@Article{AhlbrandtZiegler,
author = {Gisela Ahlbrandt and Martin Ziegler},
title = {Quasi-finitely axiomatizable totally categorical theories},
journal = {Annals of Pure and Applied Logic}, 
volume = {30},
number = {1},
pages = {63--82}, 
year = {1986}}

@Article{Rubin,
author = {Matatyahu Rubin},
title = {On the reconstruction of $\omega$-categorical structures from their automorphism groups}, 
journal = {Proceedings of the London Mathematical Society}, 
volume = {3}, 
number = {69},
pages = {225-249}, 
year = {1994}}

@article{FellerPinsker,
author = {Feller, Roman and Pinsker, Michael},
title = {An Algebraic Proof of the Dichotomy for Graph Orientation Problems with Forbidden Tournaments},
journal = {SIAM Journal on Discrete Mathematics},
volume = {40},
number = {1},
pages = {1-31},
year = {2026},
doi = {10.1137/25M1726157},
}

@article{forbiddenTournaments,
author = {Bodirsky, Manuel and Guzm\'{a}n-Pro, Santiago},
title = {Forbidden Tournaments and the Orientation Completion Problem},
journal = {SIAM Journal on Discrete Mathematics},
volume = {39},
number = {1},
pages = {170-205},
year = {2025},
doi = {10.1137/23M1604849}
}

@article {MottetPinskerSmooth,
author = {Mottet, Antoine and Pinsker, Michael},
title = {Smooth approximations: An algebraic approach to {CSP}s over finitely bounded homogeneous structures},
year = {2024},
issue_date = {October 2024},
publisher = {Association for Computing Machinery},
address = {New York, NY, USA},
volume = {71},
number = {5},
issn = {0004-5411},
doi = {10.1145/3689207},
journal = {J. ACM},
month = oct,
articleno = {36},
numpages = {47}
}

@inproceedings{MottetPinskerSmoothConf,
author    = {Antoine Mottet and
               Michael Pinsker},
title = {Smooth Approximations and {CSP}s over finitely bounded homogeneous structures},
year = {2022},
booktitle = {Proceedings of the Symposium on Logic in Computer Science (LICS)}
}

@article{BMPP16,
	Author = {Manuel Bodirsky and Barnaby Martin and Michael Pinsker and Andr{\'{a}}s Pongr{\'{a}}cz},
	Title = {Constraint Satisfaction Problems for Reducts of Homogeneous Graphs},
	Journal = {SIAM Journal on Computing},
	volume=48,
	number=4, 
	pages="1224-1264",
	year=2019,
 doi       = {10.1137/16M1082974},
 	Note = {A conference version appeared in the Proceedings of the 43rd International Colloquium on Automata, Languages, and Programming, {ICALP} 2016, pages 119:1-119:14}
}

@article{BPP-projective-homomorphisms,
author={Manuel Bodirsky and Michael Pinsker and Andr\'{a}s Pongr\'acz},
title={Projective clone homomorphisms},
journal={Journal of Symbolic Logic},
volume= 86,
number=1,
pages={148-161},
year= 2021
}

@article{BPT-decidability-of-definability,
  author = {Manuel Bodirsky and Michael Pinsker and Todor Tsankov},
  title = {Decidability of definability},
  journal = {Journal of Symbolic Logic},
  volume=78,
  number=4,
  pages={1036-1054},
  year = {2013},
  note={A conference version appeared in the Proceedings of
the Twenty-Sixth Annual IEEE Symposium on Logic in Computer Science (LICS 2011), pages {321-328}}
}

@article{BodPin-Schaefer-both,
  author = {Manuel Bodirsky and Michael Pinsker},
title = {Schaefer's theorem for graphs},  
journal = {Journal of the ACM},
volume=62,
number=3,
  doi       = {10.1145/2764899},
year = {2015}, 
pages={52 pages (article number 19)},
note={A conference version appeared in the Proceedings of STOC 2011, pages 655-664}
}

@inproceedings{Bodirsky-Mottet,
  author    = {Manuel Bodirsky and
               Antoine Mottet},
  title     = {Reducts of finitely bounded homogeneous structures, and lifting tractability from finite-domain constraint satisfaction},
  booktitle = {Proceedings of the 31st Annual IEEE Symposium on Logic in Computer Science (LICS)}, 
  note = {Preprint available at ArXiv:1601.04520},
    pages     = {623-632},
  year      = {2016}
}

@Book{Book,
author = {Manuel Bodirsky},
title  = {Complexity of Infinite-Domain Constraint Satisfaction},
year   = {2021},
doi = {10.1017/9781107337534}, 
address = {Cambridge, United Kingdom; New York, NY}, 
publisher = {Cambridge University Press},
series = {Lecture Notes in Logic (52)}
}

@inproceedings{BulatovFVConjecture,
  author    = {Andrei A. Bulatov},
  title     = {A Dichotomy Theorem for Nonuniform {CSP}s},
  booktitle = {58th {IEEE} Annual Symposium on Foundations of Computer Science, {FOCS}
               2017, {B}erkeley, {CA}, {USA}, {O}ctober 15-17},
  pages     = {319-330},
  year      = {2017}
}

@InProceedings{BodirskyGrohe,
   author = {Manuel Bodirsky and Martin Grohe},
   booktitle = {ICALP},
   title = {Non-dichotomies in Constraint Satisfaction Complexity},
   year = {2008},
   publisher = {Springer Verlag},
   series = {LNCS},
   editor = {Luca Aceto and Ivan Damgard and Leslie Ann Goldberg and Magn\'us M. Halld\'orsson and Anna Ing\'olfsd\'ottir and Igor Walukiewicz},
   pages = {184 -196}
}

@Article{Cores-journal,
   author =       "Manuel Bodirsky",
   title = "Cores of countably categorical structures",
   journal = "Logical Methods in Computer Science ({LMCS})",
   volume    = {3},
   pages = {1-16},
  number    = {1},
   year = {2007}}

@inproceedings{GJKMP-conf,
author={Pierre Gillibert and Julius Jonu\v{s}as and Michael Kompatscher and Antoine Mottet and Michael Pinsker},
title={Hrushovski's encoding and $\omega$-categorical {CSP} monsters},
  volume    = {168},
  pages     = {131:1-131:17},
    publisher = {Schloss Dagstuhl - Leibniz-Zentrum f{\"{u}}r Informatik},
  year      = {2020},
  booktitle = {47th International Colloquium on Automata, Languages, and Programming (ICALP 2020)},
  series    = {LIPIcs},
}

@BOOK{Hodges,
author = {Wilfrid Hodges},
title = {A shorter model theory},
publisher = {Cambridge University Press},
address = {Cambridge},
year = {1997}}

@book {Kechris,
   AUTHOR = {Alexander Kechris},
    TITLE = {Classical descriptive set theory},
   SERIES = {Graduate Texts in Mathematics},
   VOLUME = {156},
 PUBLISHER = {Springer},
     YEAR = {1995},
   }

@article{MottetPinskerCores,
  author    = {Antoine Mottet and
               Michael Pinsker},
  title     = {Cores over {Ramsey} structures},
  journal   = {Journal of Symbolic Logic},
  volume    =   86,
  number=1,
  pages={352-361},
  year=2021
}

@article{Topo-Birk,
  author = {Manuel Bodirsky and Michael Pinsker},
  title = {Topological {B}irkhoff},
  journal={Transactions of the American Mathematical Society},
  year = {2015}, 
  volume = {367},
  pages = {2527-2549}
}

@article{Zhuk17,
  title={A Proof of {CSP} Dichotomy Conjecture},
  author={Dmitriy Zhuk},
  journal={2017 IEEE 58th Annual Symposium on Foundations of Computer Science (FOCS)},
  year={2017},
  pages={331-342},
}

@article{Zhuk20,
  author    = {Dmitriy Zhuk},
  title     = {A Proof of the {CSP} Dichotomy Conjecture},
  journal   = {Journal of the {ACM}},
  volume    = {67},
  number    = {5},
  pages     = {30:1--30:78},
  year      = {2020},
  doi       = {10.1145/3402029}
}

@article{posetCSP18,
  author    = {Michael Kompatscher and
               Trung Van Pham},
  title     = {A Complexity Dichotomy for Poset Constraint Satisfaction},
  journal   = {IfCoLog Journal of Logics and their Applications ({FLAP})},
  volume    = {5},
  number    = {8},
  pages     = {1663-1696},
  year      = {2018}
}

@Article{tcsps-journal,
author = {Manuel Bodirsky and Jan K\'ara},
title = {The Complexity of Temporal Constraint Satisfaction Problems},
journal = {Journal of the ACM},
volume = {57},
number = {2},
pages = {1-41},
doi       = {10.1145/1667053.1667058},
note = {An extended abstract appeared in the Proceedings of the Symposium on Theory of Computing (STOC)}, 
year = {2010}}

@InProceedings{AntoineZenoPaper,
  author =	{Bitter, Zeno and Mottet, Antoine},
  title =	{{Generalized Completion Problems with Forbidden Tournaments}},
  booktitle =	{49th International Symposium on Mathematical Foundations of Computer Science (MFCS 2024)},
  pages =	{28:1--28:17},
  series =	{Leibniz International Proceedings in Informatics (LIPIcs)},
  ISBN =	{978-3-95977-335-5},
  ISSN =	{1868-8969},
  year =	{2024},
  volume =	{306},
  editor =	{Kr\'{a}lovi\v{c}, Rastislav and Ku\v{c}era, Anton{\'\i}n},
  publisher =	{Schloss Dagstuhl -- Leibniz-Zentrum f{\"u}r Informatik},
  address =	{Dagstuhl, Germany},
  doi =		{10.4230/LIPIcs.MFCS.2024.28}
}

@Article{BodirskyNesetrilJLC,
author = {Manuel Bodirsky and Jaroslav Ne\v{s}et\v{r}il},
title  = {Constraint Satisfaction with Countable Homogeneous Templates},
journal = {Journal of Logic and Computation},
volume = {16},
number = {3},
doi    = {10.1093/logcom/exi083},
pages = {359-373},
year   = {2006}}

@article{BodirskyJunker,
	author = {Bodirsky, Manuel and Junker, Markus},
	doi = {10.1007/s00012-011-0110-y},
	id = {Bodirsky2010},
	journal = {Algebra universalis},
	number = {3},
	pages = {403--417},
	title = {$\aleph_0$-categorical structures: endomorphisms and interpretations},
	url = {https://doi.org/10.1007/s00012-011-0110-y},
	volume = {64},
	year = {2010},
}

@misc{PaoliniNies,
      title={Oligomorphic groups, their automorphism groups, and the complexity of their isomorphism}, 
      author={Gianluca Paolini and Andr\'{e} Nies},
      year={2025},
      eprint={2410.02248},
      archivePrefix={arXiv},
      primaryClass={math.LO},
      url={https://arxiv.org/abs/2410.02248}, 
}

@misc{MarimonPinskerReconstruction,
      title={A guide to topological reconstruction on endomorphism monoids and polymorphism clones}, 
      author={Paolo Marimon and Michael Pinsker},
      year={2025},
      eprint={2512.01086},
      archivePrefix={arXiv},
      primaryClass={math.LO},
      url={https://arxiv.org/abs/2512.01086}, 
      doi={10.48550/arXiv.2512.01086},
      note = {Preprint available from arXiv:2512.01086}
}

@article{BPPReconstructingClone,
author = {Manuel Bodirsky and Michael Pinsker and Andr\'{a}s Pongr\'acz},
title = {Reconstructing the Topology of Clones}, 
journal = {Transactions of the American Mathematical Society},
volume=369, 
pages={3707-3740},
year={2017}
}

@article{BehrischGarcia,
  title={On a stronger reconstruction notion for monoids and clones},
  author={Mike Behrisch and Edith Vargas-Garc{\'i}a},
  journal={Forum Mathematicum},
  year={2021},
  volume={33},
  pages={1487 - 1506},
  url={https://api.semanticscholar.org/CorpusID:119173700}
}

@InProceedings{removingAlgebraicity,
  author =	{Pinsker, Michael and Rydval, Jakub and Sch\"{o}bi, Moritz and Spiess, Christoph},
  title =	{{Three Fundamental Questions in Modern Infinite-Domain Constraint Satisfaction}},
  booktitle =	{50th International Symposium on Mathematical Foundations of Computer Science (MFCS 2025)},
  pages =	{83:1--83:20},
  series =	{Leibniz International Proceedings in Informatics (LIPIcs)},
  year =	{2025},
  volume =	{345},
  publisher =	{Schloss Dagstuhl -- Leibniz-Zentrum f{\"u}r Informatik},
  address =	{Dagstuhl, Germany},
  doi =		{10.4230/LIPIcs.MFCS.2025.83}
}

\end{document}